\providecommand{\U}[1]{\protect\rule{.1in}{.1in}}
\providecommand{\U}[1]{\protect\rule{.1in}{.1in}}
\newtheorem{theorem}{Theorem}[section]
\newtheorem{proposition}{Proposition}[section]
\newtheorem{lemma}{Lemma}[section]
\newtheorem{remark}{Remark}[section]
\renewcommand{\@biblabel}[1]{}
\begin{document}

\begin{center}
{\Large \textbf{Tail empirical process and weighted extreme value index
estimator for randomly right-censored data}}\medskip\medskip

{\large Brahim Brahimi, Djamel Meraghni, Abdelhakim Necir}$^{\ast}${\large ,}
{\large Louiza Soltane}\medskip

{\small \textit{Laboratory of Applied Mathematics, Mohamed Khider University,
Biskra, Algeria}}\medskip\medskip%
\[
\]

\end{center}

\noindent\textbf{Abstract}\medskip

\noindent A tail empirical process for heavy-tailed and right-censored data is
introduced and its Gaussian approximation is established. In this context, a
(weighted) new Hill-type estimator{\Large \textbf{ }}for positive extreme
value index is proposed and its consistency and asymptotic normality are
proved by means of the aforementioned process in the framework of second-order
conditions of regular variation. In a comparative simulation study, the newly
defined estimator is seen to perform better than the already existing ones in
terms of both bias and mean squared error. As a real data example, we apply
our estimation procedure to evaluate the tail index of the survival time of
Australian male Aids patients. It is noteworthy that our approach may also
serve to develop other statistics related to the distribution tail such as
second-order parameter and reduced-bias tail index estimators. Furthermore,
the proposed tail empirical process provides a goodness-of-fit test for
Pareto-like models under censorship.\bigskip

\noindent\textbf{Keywords:} Extreme value index; Heavy tails; Random
censoring; Tail empirical process.\medskip

\noindent\textbf{AMS 2010 Subject Classification:} 60F17, 62G30, 62G32, 62P05.

\vfill

\vfill

\noindent{\small $^{\text{*}}$Corresponding author:
\texttt{necirabdelhakim@yahoo.fr} \newline\noindent\textit{E-mail
addresses:}\newline\texttt{brah.brahim@gmail.com} (B.~Brahimi)\newline%
\texttt{djmeraghni@yahoo.com} (D.~Meraghni)\newline%
\texttt{louiza\_stat@yahoo.com} (L.~Soltane)}

\section{\textbf{Introduction\label{sec1}}}

\noindent Let $X_{1},...,X_{n}$ be $n\geq1$ independent copies of a
non-negative continuous random variable (rv) $X,$ defined over some
probability space $\left(  \Omega,\mathcal{A},\mathbb{P}\right)  ,$ with
cumulative distribution function (cdf) $F.\ $These rv's are censored to the
right by a sequence of independent copies $Y_{1},...,Y_{n}$ of a non-negative
continuous rv $Y,$ independent of $X$ and having a cdf $G.$ At each stage
$1\leq j\leq n,$ we only can observe the rv's $Z_{j}:=\min\left(  X_{j}%
,Y_{j}\right)  $ and $\delta_{j}:=\mathbf{1}\left\{  X_{j}\leq Y_{j}\right\}
,$ with $\mathbf{1}\left\{  \cdot\right\}  $ denoting the indicator function.
The latter rv indicates whether there has been censorship or not. If we denote
by $H$ the cdf of the observed $Z^{\prime}s,$ then, in virtue of the
independence of $X$ and $Y,$ we have $1-H=\left(  1-F\right)  \left(
1-G\right)  .$ Throughout the paper, we will use the notation $\overline
{\mathcal{S}}(x):=\mathcal{S}(\infty)-\mathcal{S}(x),$ for any $\mathcal{S}.$
Assume further that $F$ and $G$ are heavy-tailed or, in other words, that
$\overline{F}$ and $\overline{G}$ are regularly varying at infinity with
negative indices $-1/\gamma_{1}$ and $-1/\gamma_{2}$ respectively, notation:
$\overline{F}\in\mathcal{RV}_{\left(  -1/\gamma_{1}\right)  }$ and
$\overline{G}\in\mathcal{RV}_{\left(  -1/\gamma_{2}\right)  }.$ That is%
\begin{equation}
\frac{\overline{F}\left(  tx\right)  }{\overline{F}\left(  t\right)
}\rightarrow x^{-1/\gamma_{1}}\text{ and }\frac{\overline{G}\left(  tx\right)
}{\overline{G}\left(  t\right)  }\rightarrow x^{-1/\gamma_{2}},\label{rv}%
\end{equation}
as $t\rightarrow\infty,$ for any $x>0.$\ This class of
distributions\textbf{\ }includes models such as Pareto, Burr, Fr\'{e}chet,
$\alpha-$stable $\left(  0<\alpha<2\right)  ,$ t-Student and log-gamma, known
to be very appropriate for fitting large insurance claims, large fluctuations
of prices, financial log-returns, ... \citep[see,
e.g.,][]{Res06}. The regular variation of $\overline{F}$ and $\overline{G}$
implies that $\overline{H}\in\mathcal{RV}_{\left(  -1/\gamma\right)  },$ where
$\gamma:=\gamma_{1}\gamma_{2}/\left(  \gamma_{1}+\gamma_{2}\right)  .$ Since
weak approximations of extreme value theory based statistics are achieved in
the second-order framework \citep[see][]{deHS96}, then it seems quite natural
to suppose that cdf $H$ satisfies the well-known second-order condition of
regular variation: for any $x>0$%
\[
\dfrac{\overline{H}\left(  tx\right)  /\overline{H}\left(  t\right)
-x^{-1/\gamma}}{A\left(  t\right)  }\rightarrow x^{-1/\gamma}\dfrac
{x^{\nu/\gamma}-1}{\nu\gamma},
\]
as $t\rightarrow\infty,$ where $\nu\leq0$\ is the second-order parameter and
$A$ is a function tending to $0,$ not changing sign near infinity and having a
regularly varying absolute value at infinity with index $\nu/\gamma.$\ If
$\nu=0,$ interpret $\left(  x^{\nu/\gamma}-1\right)  /\left(  \nu
\gamma\right)  $ as $\log x.$ Let us denote this assumption by $\overline
{H}\in2\mathcal{RV}_{\left(  -1/\gamma,\nu\right)  }\left(  A\right)  .$ For
the use of second-order conditions in exploring the estimators asymptotic
behaviors, see, for instance, Theorem 3.2.6 in \cite{deHF06}, page 74. In the
last decade, several authors showed an increasing interest in the issue of
estimating the extreme-value index (EVI) when the data are subject to random
censoring. In this context, \cite{BeGDFF07} proposed estimators for the EVI
and high quantiles and discussed their asymptotic properties, when the
observations are censored by a deterministic threshold, while \cite{EnFG08}
adapted various classical EVI estimators to the case where the threshold of
censorship is random, and proposed a unified method to establish their
asymptotic normality. Here, we remind the adjustment they made to Hill
estimator \citep[][]{Hill75} so as to estimate the tail index $\gamma_{1}$
under random censorship.\ Let $\left\{  \left(  Z_{i},\delta_{i}\right)
,\text{ }1\leq i\leq n\right\}  $ be a sample from the couple of rv's $\left(
Z,\delta\right)  $ and $Z_{1,n}\leq...\leq Z_{n,n}$\ represent the order
statistics pertaining to $\left(  Z_{1},...,Z_{n}\right)  .$ If we denote the
concomitant of the $i$th order statistic by $\delta_{\left[  i:n\right]  }$
(i.e. $\delta_{\left[  i:n\right]  }=\delta_{j}$ if $Z_{i,n}=Z_{j}),$\ then
the adapted Hill estimator of $\gamma_{1},$ defined by \cite{EnFG08}, is given
by the formula $\widehat{\gamma}_{1}^{\left(  EFG\right)  }:=\widehat{\gamma
}^{\left(  H\right)  }/\widehat{p},$ where%
\begin{equation}
\widehat{\gamma}^{\left(  H\right)  }:=\dfrac{1}{k}\sum\limits_{i=1}^{k}%
\log\frac{Z_{n-i+1:n}}{Z_{n-k:n}}\text{ and }\widehat{p}:=\dfrac{1}{k}%
{\displaystyle\sum\limits_{i=1}^{k}}
\delta_{\left[  n-i+1:n\right]  },\label{phate}%
\end{equation}
for a suitable sample fraction $k=k_{n},$ are respectively Hill's estimator
\citep[][]{Hill75} of $\gamma$ and an estimator of the proportion
$p:=\gamma/\gamma_{1}$ of the observed extreme values. By following similar
procedures, \cite{NDD2014} addressed the nonparametric estimation of the
conditional EVI and large quantiles for heavy-tailed distributions which was
generalized, a couple of years later, by \cite{Stupfler2016} to all three
extreme domains of attraction namely, F\'{e}chet, Gumbel and Weibull types of
distributions. In his working paper \cite{Stupfler2017} considered the
dependent random right-censoring scheme and develop an interesting new topic.
For their part, \cite{WW2014} used Kaplan-Meier integration and the synthetic
data approach of \cite{Leurg87}, to respectively introduce two Hill-type
estimators%
\[
\widehat{\gamma}_{1}^{\left(  WW1\right)  }:=\sum\limits_{i=1}^{k}%
\frac{1-F_{n}\left(  Z_{n-i+1:n}\right)  }{1-F_{n}\left(  Z_{n-k:n}\right)
}\log\frac{Z_{n-i+1:n}}{Z_{n-i:n}},
\]
and%
\[
\widehat{\gamma}_{1}^{\left(  WW2\right)  }:=\sum\limits_{i=1}^{k}%
\frac{1-F_{n}\left(  Z_{n-i+1:n}\right)  }{1-F_{n}\left(  Z_{n-k:n}\right)
}\frac{\delta_{\left[  n-i+1:n\right]  }}{i}\log\frac{Z_{n-i+1:n}}{Z_{n-k:n}},
\]
where
\begin{equation}
F_{n}\left(  x\right)  =1-%
{\displaystyle\prod_{Z_{i:n}\leq x}}
\left(  1-\frac{1}{n-i+1}\right)  ^{\delta_{\left[  i:n\right]  }%
},\label{kaplan}%
\end{equation}
is the famous Kaplan-Meier estimator of cdf $F$ \citep[][]{KM58}. In their
simulation study, the authors pointed out that, for weak censoring $\left(
\gamma_{1}<\gamma_{2}\right)  ,$ their estimators perform better than
$\widehat{\gamma}_{1}^{\left(  EFG\right)  },$ in terms of bias and mean
squared error. However, in the strong censoring case $\left(  \gamma
_{1}>\gamma_{2}\right)  ,$ they noticed that the results become unsatisfactory
for both $\widehat{\gamma}_{1}^{\left(  WW1\right)  }$ and $\widehat{\gamma
}_{1}^{\left(  WW2\right)  }.$ With additional assumptions on $\gamma_{1}$ and
$\gamma_{2},$ they only established the consistency of their estimators,
without any indication on the asymptotic normality. Though these conditions
are legitimate from a theoretical viewpoint, they may be considered as
constraints in case studies. Very recently, \cite{B18} used Worms's estimators
to derive new reduced-bias ones, constructed bootstrap confidence intervals
for $\gamma_{1}$ and applied their results to long-tailed car insurance
portfolio. Likewise, \cite{BBWG-16} considered maximum likelihood inference,
on the basis of the relative excesses $\left\{  Z_{n-i+1:n}/Z_{n-k:n}\right\}
_{i=1}^{k},$ to propose a new reduced-bias estimator for the tail index of
models belonging to Hall's class \citep[][]{Hall82}. The problem is that, even
if this family includes a great number of usual heavy-tailed distributions, it
represents a restriction to the larger class of regularly varying cdf's, in
particular those with null second-order parameter $\left(  \nu=0\right)  $
such as the log-gamma model. As we can see, the only EVI estimator that does
not impose any restrictive assumptions on the model is the one introduced by
\cite{EnFG08}. For this reason, we intend to construct a new weighted
estimator to the index $\gamma_{1}$ that enjoys the benefits of $\widehat
{\gamma}_{1}^{\left(  EFG\right)  }.$

\subsection{Constructing a new estimator for $\gamma_{1}$}

First, we introduce two very crucial sub-distribution functions $H^{\left(
i\right)  }\left(  z\right)  :=\mathbb{P}\left\{  Z_{1}\leq z,\delta
_{1}=i\right\}  ,$ $i=0,1,$ for $z>0,$ so that one has $H\left(  z\right)
=H^{\left(  0\right)  }\left(  z\right)  +H^{\left(  1\right)  }\left(
z\right)  .$ The empirical counterparts are, respectively, defined by%
\[
H_{n}^{\left(  0\right)  }\left(  z\right)  :=\frac{1}{n}\sum_{i=1}%
^{n}\mathbf{1}\left\{  Z_{i}\leq z\right\}  \left(  1-\delta_{i}\right)
,\text{ }H_{n}^{\left(  1\right)  }\left(  z\right)  :=\frac{1}{n}\sum
_{i=1}^{n}\mathbf{1}\left\{  Z_{i}\leq z\right\}  \delta_{i},
\]
and $H_{n}\left(  z\right)  :=\dfrac{1}{n}\sum_{i=1}^{n}\mathbf{1}\left\{
Z_{i}\leq z\right\}  =H_{n}^{\left(  0\right)  }\left(  z\right)
+H_{n}^{\left(  1\right)  }\left(  z\right)  .$ From Lemma 4.1 of
\cite{BMN-2015}, under the first-order conditions $\left(  \ref{rv}\right)  ,$
we have $\overline{H}^{\left(  1\right)  }\left(  t\right)  /\overline
{H}\left(  t\right)  \rightarrow p,$ as $t\rightarrow\infty,$ which implies
that $\overline{H}^{\left(  1\right)  }\in\mathcal{RV}_{\left(  -1/\gamma
\right)  }$ too. Then it is natural to also assume that $\overline{H}^{\left(
1\right)  }$ satisfies the second-order condition of regular variation, in the
sense that $\overline{H}^{\left(  1\right)  }\in2\mathcal{RV}_{\left(
-1/\gamma,\nu_{1}\right)  }\left(  A_{1}\right)  .$ From Theorem 1.2.2 in
\cite{deHF06}, the assumption $\overline{H}\in\mathcal{RV}_{\left(
-1/\gamma\right)  }$ implies that $\int_{t}^{\infty}x^{-1}\overline{H}\left(
x\right)  dx/\overline{H}\left(  t\right)  \rightarrow\gamma,$ as
$t\rightarrow\infty,$ which, by integration by parts, gives
\begin{equation}
\dfrac{1}{\overline{H}\left(  t\right)  }\int_{t}^{\infty}\log\left(
z/t\right)  dH\left(  z\right)  \rightarrow\gamma,\text{ as }t\rightarrow
\infty.\label{hill-theo}%
\end{equation}
In other words, we have
\[
I\left(  t\right)  :=\left(  \dfrac{\overline{H}\left(  t\right)  }%
{\overline{H}^{\left(  1\right)  }\left(  t\right)  }\right)  \dfrac
{1}{\overline{H}\left(  t\right)  }\int_{t}^{\infty}\log\left(  z/t\right)
dH\left(  z\right)  \rightarrow\gamma/p=\gamma_{1}.
\]
Taking $t=t_{n}=Z_{n-k:n}$ and replacing $\overline{H}^{\left(  1\right)  }$
and $\overline{H}$ by their respective empirical counterparts $\overline
{H}_{n}^{\left(  1\right)  }$ and $\overline{H}_{n}$ yield that $I\left(
t\right)  $ becomes, in terms of $n,$%
\begin{equation}
\left(  \frac{\overline{H}_{n}\left(  Z_{n-k:n}\right)  }{\overline{H}%
_{n}^{\left(  1\right)  }\left(  Z_{n-k:n}\right)  }\right)  \frac
{1}{\overline{H}_{n}\left(  Z_{n-k:n}\right)  }\int_{Z_{n-k:n}}^{\infty}%
\log\left(  z/Z_{n-k:n}\right)  dH_{n}\left(  z\right)  .\label{efg}%
\end{equation}
We have $\overline{H}_{n}\left(  Z_{n-k:n}\right)  =k/n$ and $\overline{H}%
_{n}^{\left(  1\right)  }\left(  Z_{n-k:n}\right)  =n^{-1}\sum_{i=1}^{k}%
\delta_{\left[  n-i+1:n\right]  },$ then it is readily checked that%
\[
\frac{\overline{H}_{n}^{\left(  1\right)  }\left(  Z_{n-k:n}\right)
}{\overline{H}_{n}\left(  Z_{n-k:n}\right)  }=\widehat{p}\text{ and }%
\int_{Z_{n-k:n}}^{\infty}\log\left(  z/Z_{n-k:n}\right)  \frac{dH_{n}\left(
z\right)  }{\overline{H}_{n}\left(  Z_{n-k:n}\right)  }=\widehat{\gamma
}^{\left(  H\right)  }.
\]
Substituting this in $\left(  \ref{efg}\right)  ,$ leads to the definition of
$\widehat{\gamma}_{1}^{\left(  EFG\right)  }.$ By incorporating the quantity
$\overline{H}\left(  tz\right)  /\overline{H}^{\left(  1\right)  }\left(
tz\right)  $ inside the integral $\int_{1}^{\infty}\log\left(  z\right)
dH\left(  tz\right)  /\overline{H}\left(  t\right)  ,$ we get from Lemma
\ref{Lemma-1} (for $r=1)$%
\[
\frac{1}{\overline{H}\left(  t\right)  }\int_{1}^{\infty}\frac{\log\left(
z\right)  dH\left(  tz\right)  }{\overline{H}^{\left(  1\right)  }\left(
tz\right)  /\overline{H}\left(  tz\right)  }\rightarrow\gamma_{1},\text{ as
}t\rightarrow\infty.
\]
But one has to be careful, because a division by zero may occur in the
estimation procedure. Indeed, we have for instance $\overline{H}_{n}^{\left(
1\right)  }\left(  Z_{n:n}\right)  =\delta_{\left[  n:n\right]  }$ which may
be $0$ or $1.$ To avoid this boring situation, we add to the denominator a
suitable non-null sequence tending to zero and being in agreement with the
normalizing constant $\sqrt{k}$ corresponding to the limit distributions of
tail indices estimators. For convenience, to have Gaussian approximations of
order $O_{\mathbb{P}}\left(  k^{-1/2}\right)  $ (tending to zero in
probability), we choose the sequence $k^{-1}$ and we show in Lemma
\ref{Lemma-1}, that for a given sequence $t_{n}\rightarrow\infty,$ we have
\begin{equation}
\frac{1}{\overline{H}\left(  t_{n}\right)  }\int_{1}^{\infty}\frac{\log\left(
z\right)  dH\left(  t_{n}z\right)  }{\overline{H}^{\left(  1\right)  }\left(
t_{n}z\right)  /\overline{H}\left(  t_{n}z\right)  +k^{-1}}\rightarrow
\gamma_{1},\text{ as }n\rightarrow\infty.\label{lim-princ}%
\end{equation}
Thus, by letting $t_{n}=Z_{n-k:n}$ and by replacing $\overline{H}^{\left(
1\right)  }$ and $\overline{H}$ by $\overline{H}_{n}^{\left(  1\right)  }$ and
$\overline{H}_{n}$ respectively, the left-hand side becomes%
\[
\frac{1}{\overline{H}_{n}\left(  Z_{n-k:n}\right)  }\int_{Z_{n-k:n}}^{Z_{n:n}%
}\frac{\overline{H}_{n}\left(  z\right)  \log\left(  z/Z_{n-k:n}\right)
dH_{n}\left(  z\right)  }{\overline{H}_{n}^{\left(  1\right)  }\left(
z\right)  +k^{-1}\overline{H}_{n}\left(  z\right)  }.
\]
This may be rewritten into%
\begin{align*}
&  \frac{n}{k}\int_{0}^{\infty}\mathbf{1}\left(  Z_{n-k:n}<z<Z_{n:n}\right)
\frac{\overline{H}_{n}\left(  z\right)  \log\left(  z/Z_{n-k:n}\right)
dH_{n}\left(  z\right)  }{\overline{H}_{n}^{\left(  1\right)  }\left(
z\right)  +k^{-1}\overline{H}_{n}\left(  z\right)  }\\
&  =\frac{1}{k}\sum_{i=1}^{n}\mathbf{1}\left(  Z_{n-k:n}<Z_{i:n}%
<Z_{n:n}\right)  \frac{\overline{H}_{n}\left(  Z_{i:n}\right)  \log\left(
Z_{i:n}/Z_{n-k:n}\right)  }{\overline{H}_{n}^{\left(  1\right)  }\left(
Z_{i:n}\right)  +k^{-1}\overline{H}_{n}\left(  Z_{i:n}\right)  },
\end{align*}
which equals
\[
\frac{1}{k}\sum_{i=n-k+1}^{n-1}\frac{\overline{H}_{n}\left(  Z_{i:n}\right)
\log\left(  Z_{i:n}/Z_{n-k:n}\right)  }{\overline{H}_{n}^{\left(  1\right)
}\left(  Z_{i:n}\right)  +k^{-1}\overline{H}_{n}\left(  Z_{i:n}\right)
}=\frac{1}{k}\sum_{i=n-k+1}^{n-1}\frac{\left(  n-i\right)  \log\left(
Z_{i:n}/Z_{n-k:n}\right)  }{\sum_{j=i+1}^{n}\delta_{\left[  j:n\right]
}+\left(  n-i\right)  /k}.
\]
Finally, changing $i$ by $n-i$ and $j$ by $n-j+1$ yields a new (random)
weighted estimator for the EVI $\gamma_{1}$ as follows:%
\begin{equation}
\widehat{\gamma}_{1}:=\frac{1}{k}\sum_{i=1}^{k-1}\frac{i\log\left(
Z_{n-i:n}/Z_{n-k:n}\right)  }{\sum_{j=1}^{i}\delta_{\left[  n-j+1:n\right]
}+i/k}.\label{K-estimator}%
\end{equation}
For the purpose of establishing the consistency and asymptotic normality of
$\widehat{\gamma}_{1},$ we next introduce a tail empirical process for
censored data. For $x\geq1,$ let us define%
\begin{equation}
\Delta_{n}\left(  x\right)  :=\frac{n}{k}\int_{xZ_{n-k:n}}^{\infty}%
\frac{\overline{H}_{n}\left(  z\right)  dH_{n}\left(  z\right)  }{\overline
{H}_{n}^{\left(  1\right)  }\left(  z\right)  +k^{-1}\overline{H}_{n}\left(
z\right)  }.\label{deltan}%
\end{equation}
By integrating by parts, we show easily that $\widehat{\gamma}_{1}=\int
_{1}^{\infty}x^{-1}\Delta_{n}\left(  x\right)  dx,$ thereby, motivated by the
tail product-limit process for truncated data, recently given in
\cite{BchMN-16a}, we defined this tail empirical process by%
\begin{equation}
D_{n}\left(  x\right)  :=\sqrt{k}\left(  \Delta_{n}\left(  x\right)
-p^{-1}x^{-1/\gamma}\right)  ,\text{ }x\geq1,\label{process}%
\end{equation}
so that
\[
\sqrt{k}\left(  \widehat{\gamma}_{1}-\gamma_{1}\right)  =\int_{1}^{\infty
}x^{-1}D_{n}\left(  x\right)  dx.
\]
In the non censoring case $\left(  p=1\right)  ,$ we have $X\equiv Z,$
$\overline{F}_{n}=\overline{H}_{n}=\overline{H}_{n}^{\left(  1\right)  },$
with $F_{n}$ being the usual empirical cdf. In this case, we have%
\[
\Delta_{n}\left(  x\right)  =\dfrac{n}{k}\overline{F}_{n}\left(
xX_{n-k:n}\right)  +O_{\mathbb{P}}\left(  k^{-1}\right)  ,
\]
thus
\begin{equation}
D_{n}\left(  x\right)  =\sqrt{k}\left(  \dfrac{n}{k}\overline{F}_{n}\left(
xX_{n-k:n}\right)  -x^{-1/\gamma}\right)  +O_{\mathbb{P}}\left(
k^{-1/2}\right)  ,\label{D}%
\end{equation}
corresponds (asymptotically) to the tail empirical process for complete data
\citep[see, e.g., page 161 in][]{deHF06}. Note that one may think that it
would have been more natural to simply use Kaplan-Meier estimator of $F,$
given in $\left(  \ref{kaplan}\right)  ,$ to define a tail empirical process
in the censoring case. This was done in \cite{BMN-2016}, but the asymptotic
properties were only established under the condition $p>1/2,$ which would
constitute a restriction for applications. Indeed, there exist real datasets
used in case studies with proportions $p$ estimated at less than a half. We
can cite, amongst others, the aids survival data to which \cite{EnFG08}
applied their methodology and approximately got $0.28$ for $p$ (it is exacly
the same value that we ourselves will find later on in Section \ref{sec4}) and
the car liability insurance data studied in \cite{BBWG-16} and \cite{B18}
where the estimated value of $p$ was $0.40.$\medskip

\noindent The rest of the paper is organized as follows. In Section
\ref{sec2}, we provide our main result, namely two weak approximations leading
to consistency and asymptotic normality of $\widehat{\gamma}_{1},$ whose
proofs are postponed to Section \ref{sec5}. The finite sample behavior of the
proposed estimator is checked by simulation in Section \ref{sec3}, where a
comparison with the already existing ones is made as well. Section \ref{sec4}
is devoted to an application to the survival time of Australian male Aids
patients. Finally, some results that are instrumental to the proofs are given
in the Appendix.

\section{\textbf{Main results\label{sec2}}}

\noindent In the sequel, the functions $f^{\leftarrow}\left(  s\right)
:=\inf\left\{  x:f\left(  x\right)  \geq s\right\}  ,$ $0<s<1$ and
$U_{f}\left(  t\right)  :=f^{\leftarrow}\left(  1-1/t\right)  ,$ $t>1,$
respectively stand for the quantile and tail quantile functions pertaining to
cdf $f.$ For further use, we set $h=h_{n}:=U_{H}\left(  n/k\right)  $ and
define
\begin{equation}
r_{n}\left(  x\right)  :=\frac{n}{k}\int_{hx}^{\infty}\frac{\overline
{H}\left(  w\right)  dH\left(  w\right)  }{\overline{H}^{\left(  1\right)
}\left(  w\right)  +k^{-1}\overline{H}\left(  w\right)  },\text{ }x\geq1.
\label{rn}%
\end{equation}
Let us now state our first result in which we provide Gaussian approximations
both to $\sqrt{k}\left(  \Delta_{n}\left(  x\right)  -r_{n}\left(  x\right)
\right)  $ and $D_{n}\left(  x\right)  .$

\begin{theorem}
\label{Theorem1}Assume that $\overline{F}\in\mathcal{RV}_{\left(
-1/\gamma_{1}\right)  }$ and $\overline{G}\in\mathcal{RV}_{\left(
-1/\gamma_{2}\right)  }$ and let $k=k_{n}$ be an integer sequence such that
$k\rightarrow\infty$ and $k/n\rightarrow0.$ Then there exists a sequence of
Brownian bridges $\left\{  B_{n}\left(  s\right)  ;\text{ }0\leq
s\leq1\right\}  $ defined on the probability space $\left(  \Omega
,\mathcal{A},\mathbb{P}\right)  ,$ such that, for every $0\leq\tau<1/8,$ we
have%
\begin{equation}
\sup_{x\geq1}x^{\tau/\gamma}\left\vert \left(  \Delta_{n}\left(  x\right)
-r_{n}\left(  x\right)  \right)  -k^{-1/2}p^{-1}\sqrt{\dfrac{n}{k}}%
\mathcal{L}_{n}\left(  x^{-1/\gamma}\right)  \right\vert \overset{\mathbb{P}%
}{\rightarrow}0, \label{TP-C1}%
\end{equation}
as $n\rightarrow\infty,$ where $\left\{  \mathcal{L}_{n}\left(  w\right)
;0<w\leq1\right\}  $ is a centred Gaussian process defined by%
\[
\mathcal{L}_{n}\left(  w\right)  :=B_{n}^{\ast}\left(  \dfrac{k}{n}w\right)
-p^{-1}wB_{n}^{\ast}\left(  \dfrac{k}{n}\right)  -%
{\displaystyle\int_{0}^{w}}
s^{-1}\left\{  B_{n}^{\ast}\left(  \dfrac{k}{n}s\right)  -p^{-1}B_{n}\left(
p\dfrac{k}{n}s\right)  \right\}  ds,
\]
with $B_{n}^{\ast}\left(  s\right)  :=B_{n}\left(  ps\right)  -B_{n}\left(
1-\left(  1-p\right)  s\right)  ,$ for $0\leq ps<1.$ If, in addition, we
assume that
\[
\overline{H}\in2\mathcal{RV}_{\left(  -1/\gamma,\nu\right)  }\left(  A\right)
\text{ and }\overline{H}^{\left(  1\right)  }\in2\mathcal{RV}_{\left(
-1/\gamma,\nu_{1}\right)  }\left(  A_{1}\right)  ,
\]
then%
\begin{equation}
\sup_{x\geq1}x^{\tau/\gamma}\left\vert D_{n}\left(  x\right)  -p^{-1}%
\sqrt{\dfrac{n}{k}}\mathcal{L}_{n}\left(  x^{-1/\gamma}\right)  -\mathcal{R}%
_{n}\left(  x\right)  \right\vert \overset{\mathbb{P}}{\rightarrow}0,\text{ }
\label{TP-C2}%
\end{equation}
as $n\rightarrow\infty,$ where%
\[
\mathcal{R}_{n}\left(  x\right)  :=p^{-1}x^{-1/\gamma}\sqrt{k}\left\{
\dfrac{x^{\nu/\gamma}-1}{\nu\gamma}A\left(  h\right)  +\frac{\nu_{\ast}%
+x^{\nu_{\ast}/\gamma}-1}{\gamma\nu_{\ast}\left(  1-\nu_{\ast}\right)
}A_{\ast}\left(  h\right)  -A_{2}\left(  h\right)  \right\}  +o\left(
x^{-\tau/\gamma}\right)  ,
\]
with $\nu_{\ast}:=\max\left(  \nu,\nu_{1}\right)  ,$ $A_{\ast}:=\mathbf{1}%
\left\{  \nu>\nu_{1}\right\}  A-\mathbf{1}\left\{  \nu\leq\nu_{1}\right\}
A_{1}$ and $A_{2}:=\overline{H}^{\left(  1\right)  }/\overline{H}-p,$ provided
that $\sqrt{k}A_{i}\left(  h\right)  =O\left(  1\right)  ,$\textbf{ }$i=1,2$
and\textbf{ }$\sqrt{k}A\left(  h\right)  =O\left(  1\right)  .$
\end{theorem}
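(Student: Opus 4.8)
The plan is to realize $\Delta_{n}\left( x\right) $ as a smooth functional of the bivariate tail empirical measure of $\left( Z_{i},\delta_{i}\right) $ and then transfer a Hungarian--type strong approximation of that measure to $\Delta_{n}$ and $D_{n}$. First I would pass to the uniform scale by setting $V_{i}:=H\left( Z_{i}\right) \sim\mathrm{Unif}\left( 0,1\right) $, so that after the time change $z\mapsto U_{H}\left( 1/(1-v)\right) $ the functions $\overline{H}_{n}$ and $\overline{H}_{n}^{\left( 1\right) }$ become the uniform empirical survival function and its $\delta$--marked version; recall from the first--order assumption (Lemma 4.1 of \cite{BMN-2015}, resp.\ Lemma \ref{Lemma-1}) that $\overline{H}^{\left( 1\right) }/\overline{H}\rightarrow p$. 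Next I would eliminate the random lower limit: since $\overline{H}_{n}\left( Z_{n-k:n}\right) =k/n$ exactly, the ratio $Z_{n-k:n}/h$ is an intermediate uniform order statistic in disguise, so $\sqrt{k}\left( Z_{n-k:n}/h-1\right) =O_{\mathbb{P}}\left( 1\right) $, and the regular variation of $\overline{H},\overline{H}^{\left( 1\right) }$ lets me replace $xZ_{n-k:n}$ by $xh$ in \eqref{deltan}, the leading correction being the $-p^{-1}x^{-1/\gamma}B_{n}^{\ast}\left( k/n\right) $ term of $\mathcal{L}_{n}$ and the remainder negligible uniformly in $x\geq1$.

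Second, I would linearize the integrand. On the relevant tail region one has $\overline{H}_{n}=\overline{H}\left( 1+\varepsilon_{0}\right) $ and $\overline{H}_{n}^{\left( 1\right) }=\overline{H}^{\left( 1\right) }\left( 1+\varepsilon_{1}\right) $ with $\varepsilon_{0},\varepsilon_{1}$ uniformly of order $\left( k/n\right) ^{1/2}$ relative to $\overline{H},\overline{H}^{\left( 1\right) }$, so
\[
\frac{\overline{H}_{n}\left( z\right) }{\overline{H}_{n}^{\left( 1\right) }\left( z\right) +k^{-1}\overline{H}_{n}\left( z\right) }=\frac{\overline{H}\left( z\right) }{\overline{H}^{\left( 1\right) }\left( z\right) +k^{-1}\overline{H}\left( z\right) }\left( 1+\ell_{n}\left( z\right) +\rho_{n}\left( z\right) \right) ,
\]
where $\ell_{n}$ is linear in $\varepsilon_{0},\varepsilon_{1}$ and $\rho_{n}$ is a quadratic remainder controlled via the lower bound $k^{-1}\overline{H}_{n}\left( z\right) >0$ on the denominator (precisely the role of the $k^{-1}$ regularization, which is what saves the argument when $p$ is small). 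Integrating against $dH_{n}=dH+d\left( H_{n}-H\right) $ and an integration by parts turn $\Delta_{n}\left( x\right) -r_{n}\left( x\right) $ into a linear functional of the uniform empirical processes of the $V_{i}$ and of the $V_{i}$ with $\delta$--marks; the deterministic part reproduces $r_{n}\left( x\right) $ and the $d\log$ contributions produce the $\int_{0}^{w}s^{-1}\left\{ \cdots\right\} ds$ structure of $\mathcal{L}_{n}$.

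Third, I would invoke a KMT/Hungarian construction for the two--component uniform empirical process (in the spirit of Ch.~5 of \cite{deHF06} and of the censored setting in \cite{EnFG08}) to build a single sequence of Brownian bridges $B_{n}$ on $\left[ 0,1\right] $ such that the $\delta=1$ and $\delta=0$ parts are approximated by $B_{n}\left( p\cdot\right) $ and $B_{n}\left( 1-\left( 1-p\right) \cdot\right) $; reassembling yields exactly $B_{n}^{\ast}$ and hence the process $\mathcal{L}_{n}$ of \eqref{TP-C1}. The weighted sup bound for $0\leq\tau<1/8$ then follows by the usual two--region argument: on $1\leq x\leq\left( n/k\right) ^{\gamma\left( 1-\epsilon\right) }$ one uses the polynomial KMT rate together with weighted modulus--of--continuity inequalities for Brownian bridges, while on $x>\left( n/k\right) ^{\gamma\left( 1-\epsilon\right) }$ one bounds the empirical integral and the Gaussian term crudely, using that at most $k$ order statistics contribute and $x^{-1/\gamma}$ decays; the threshold $1/8$ arises from balancing the polynomial KMT rate against the weighted Brownian--bridge fluctuations, as for the complete--data tail empirical process.

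Finally, for \eqref{TP-C2} it remains to replace $r_{n}\left( x\right) $ by $p^{-1}x^{-1/\gamma}$, which is a pure second--order regular--variation computation: insert the $2\mathcal{RV}$ expansions of $\overline{H}\left( hxz\right) /\overline{H}\left( h\right) $ and $\overline{H}^{\left( 1\right) }\left( hxz\right) /\overline{H}^{\left( 1\right) }\left( h\right) $, together with $\overline{H}^{\left( 1\right) }/\overline{H}-p=A_{2}$, into \eqref{rn}, use $\left( n/k\right) \overline{H}\left( h\right) =1$, and expand to first order in $A,A_{1},A_{2}$; the three summands of $\mathcal{R}_{n}\left( x\right) $ come respectively from the $H$--expansion in the numerator (the $A\left( h\right) $--term), the $H^{\left( 1\right) }$--expansion in the denominator (the $A_{\ast}\left( h\right) $--term, with $\nu_{\ast}=\max\left( \nu,\nu_{1}\right) $ selecting the dominant exponent and the sign flip on $A_{1}$ explaining $A_{\ast}$), and the ratio correction at $h$ (the $-A_{2}\left( h\right) $--term), after multiplication by $\sqrt{k}$, the hypotheses $\sqrt{k}A_{i}\left( h\right) =O\left( 1\right) $ keeping them $O\left( 1\right) $ and Potter--type bounds controlling the error uniformly in $x\geq1$ under the $x^{-\tau/\gamma}$ weight. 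The main obstacle throughout will be the uniform--in--$x$ control of both the linearization remainder and the Gaussian approximation error all the way into the extreme tail $x\rightarrow\infty$ with the $x^{\tau/\gamma}$ weight, simultaneously handling the random threshold $Z_{n-k:n}$ and the possible smallness of $\overline{H}_{n}^{\left( 1\right) }$; pinning down the exact range $\tau<1/8$ and checking that no term blows up near $x=1$ will require the most care.
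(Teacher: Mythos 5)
Your plan follows essentially the same route as the paper: linearize the regularized ratio $\overline{H}_{n}/(\overline{H}_{n}^{(1)}+k^{-1}\overline{H}_{n})$, absorb the random threshold $Z_{n-k:n}$ versus $h$ into a $-p^{-1}x^{-1/\gamma}B_{n}^{\ast}(k/n)$ correction, couple the resulting empirical functionals to a single sequence of Brownian bridges, and treat $r_{n}(x)-p^{-1}x^{-1/\gamma}$ by second-order regular variation; your reading of where each summand of $\mathcal{R}_{n}$ comes from is exactly right. The one substantive point your sketch glosses over is the coupling itself. The paper does not use an off-the-shelf bivariate KMT theorem; it uses the Einmahl--Koning representation $U_{i}=\delta_{i}H^{(1)}(Z_{i})+(1-\delta_{i})(\theta+H^{(0)}(Z_{i}))$, which maps the marked sample to a \emph{single} uniform sample, so that $\overline{H}_{n}^{(1)}$ is driven by $\alpha_{n}$ near $0$ while $\overline{H}_{n}^{(0)}$ is driven by the increment $\alpha_{n}(\theta)-\alpha_{n}(\theta-s)$ around the interior point $\theta$. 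The standard Cs\"{o}rg\H{o}--Cs\"{o}rg\H{o}--Horv\'{a}th--Mason weighted approximation \eqref{approx} does not cover such increments, and this is precisely why the separate approximation \eqref{approx2} (from \cite{Necir2017}) is invoked; if you posit a two-component KMT you must either prove it or reduce to this device, since it is what guarantees that the $\delta=1$ and $\delta=0$ parts are approximated by the \emph{same} bridge evaluated at $p\cdot$ and $1-(1-p)\cdot$, i.e.\ that $B_{n}^{\ast}$ has the stated covariance structure.

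A second, smaller caveat: your claim that $\varepsilon_{0},\varepsilon_{1}$ are ``uniformly of order $(k/n)^{1/2}$ relative to $\overline{H},\overline{H}^{(1)}$'' is not correct as stated (near $z\approx Z_{n-k:n}$ the relative error is $O_{\mathbb{P}}(k^{-1/2})$, not $(k/n)^{1/2}$, and in the extreme tail $\overline{H}(z)\sim 1/n$ it is $O_{\mathbb{P}}(1)$), so a blanket linearization is not available uniformly in $x\geq1$. You do flag this as the main obstacle, and it is: the paper replaces the uniform relative bound by the weighted inequalities $|\overline{H}_{n}-\overline{H}|=O_{\mathbb{P}}(n^{-\eta})\overline{H}^{1-\eta}$ of Lemma \ref{lemma-5}, combined with Potter bounds, and controls sixteen explicit remainders (Lemmas \ref{Lemma-6}--\ref{Lemma-14}); the interplay $\eta<1/4$ from \eqref{approx}--\eqref{approx2} with the exponent bookkeeping $2\eta-1/2+\tau<0$ is what produces the threshold $\tau<1/8$, rather than a balance against a polynomial KMT rate. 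With the coupling lemma supplied and the far-tail control done via these weighted inequalities instead of a uniform linearization, your outline completes to the paper's proof.
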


\noindent In the following theorem, we establish the consistency and
asymptotic normality of our new estimator $\widehat{\gamma}_{1}.$

\begin{theorem}
\label{Theorem2}Assume that $\overline{F}\in\mathcal{RV}_{\left(
-1/\gamma_{1}\right)  }$ and $\overline{G}\in\mathcal{RV}_{\left(
-1/\gamma_{2}\right)  }$ and let $k=k_{n}$ be an integer sequence such that
$k\rightarrow\infty$ and $k/n\rightarrow0,$ then $\widehat{\gamma}_{1}%
\overset{\mathbb{P}}{\rightarrow}\gamma_{1},$ as $n\rightarrow\infty.$ If, in
addition, we assume that $\overline{H}\in2\mathcal{RV}_{\left(  -1/\gamma
,\nu\right)  }\left(  A\right)  $ and $\overline{H}^{\left(  1\right)  }%
\in2\mathcal{RV}_{\left(  -1/\gamma,\nu_{1}\right)  }\left(  A_{1}\right)  ,$
so that $\sqrt{k}A\left(  h\right)  ,$ $\sqrt{k}A_{1}\left(  h\right)  $ and
$\sqrt{k}A_{2}\left(  h\right)  $ be asymptotically bounded, then%
\begin{align*}
\sqrt{k}\gamma_{1}^{-1}\left(  \widehat{\gamma}_{1}-\gamma_{1}\right)
-p^{-1}\mu_{n}  &  =\sqrt{\dfrac{n}{k}}%
{\displaystyle\int_{0}^{1}}
s^{-1}B_{n}^{\ast}\left(  \dfrac{k}{n}s\right)  ds-p^{-1}\sqrt{\dfrac{n}{k}%
}B_{n}^{\ast}\left(  \dfrac{k}{n}\right)  \medskip\\
&  +\sqrt{\dfrac{n}{k}}%
{\displaystyle\int_{0}^{1}}
s^{-1}\left\{  B_{n}^{\ast}\left(  \dfrac{k}{n}s\right)  -p^{-1}B_{n}\left(
p\dfrac{k}{n}s\right)  \right\}  \log\left(  s\right)  ds+o_{\mathbb{P}%
}\left(  1\right)  ,
\end{align*}
where%
\begin{equation}
\mu_{n}:=\frac{\sqrt{k}A\left(  h\right)  }{1-\nu}+\frac{2-\mathbb{\nu}_{\ast
}}{\left(  1-\mathbb{\nu}_{\ast}\right)  ^{2}}\sqrt{k}A_{\ast}\left(
h\right)  -\gamma\sqrt{k}A_{2}\left(  h\right)  . \label{mu-n}%
\end{equation}
Whenever $\sqrt{k}A\left(  h\right)  ,$ $\sqrt{k}A_{1}\left(  h\right)  $ and
$\sqrt{k}A_{2}\left(  h\right)  $ respectively converge to finite real numbers
$\lambda,$ $\lambda_{1}$ and $\lambda_{2},$ then
\[
\sqrt{k}\left(  \widehat{\gamma}_{1}-\gamma_{1}\right)  \overset{\mathcal{D}%
}{\rightarrow}\mathcal{N}\left(  p^{-1}\mu,\left(  9-8p\right)  \left(
\gamma_{1}^{2}/p\right)  \right)  ,
\]
where
\[
\mu:=\lambda/\left(  1-\nu\right)  +\left(  2-\mathbb{\nu}_{\ast}\right)
\left(  1-\mathbb{\nu}_{\ast}\right)  ^{-2}\lambda_{\ast}-\gamma\lambda_{2},
\]
with $\lambda_{\ast}:=\mathbf{1}\left\{  \nu>\nu_{1}\right\}  \lambda
-\mathbf{1}\left\{  \nu\leq\nu_{1}\right\}  \lambda_{1}.$
\end{theorem}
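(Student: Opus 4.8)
The plan is to ride on the two integral identities already recorded, namely $\widehat{\gamma}_{1}=\int_{1}^{\infty}x^{-1}\Delta_{n}\left(  x\right)  dx$ and $\sqrt{k}\left(  \widehat{\gamma}_{1}-\gamma_{1}\right)  =\int_{1}^{\infty}x^{-1}D_{n}\left(  x\right)  dx$, and to insert into them the two approximations of Theorem \ref{Theorem1}. For the consistency statement only the first-order conditions are at hand, so I would use $\left(  \ref{TP-C1}\right)$ in the form $\Delta_{n}\left(  x\right)  -r_{n}\left(  x\right)  =k^{-1/2}p^{-1}\sqrt{n/k}\,\mathcal{L}_{n}\left(  x^{-1/\gamma}\right)  +o_{\mathbb{P}}\left(  x^{-\tau/\gamma}\right)$, uniformly in $x\geq1$, multiply by $x^{-1}$ and integrate over $\left[  1,\infty\right)$. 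The remainder integrates to $o_{\mathbb{P}}\left(  1\right)$ since $\int_{1}^{\infty}x^{-1-\tau/\gamma}dx<\infty$ for any fixed $\tau>0$, while the leading stochastic term is $O_{\mathbb{P}}\left(  k^{-1/2}\right)$ because, carrying the weight $x^{\tau/\gamma}$, $\sqrt{n/k}\,\mathcal{L}_{n}\left(  x^{-1/\gamma}\right)$ is a Brownian-bridge functional bounded in probability uniformly in $x$; hence $\int_{1}^{\infty}x^{-1}\left(  \Delta_{n}-r_{n}\right)  dx=o_{\mathbb{P}}\left(  1\right)$. It then only remains to check $\int_{1}^{\infty}x^{-1}r_{n}\left(  x\right)  dx\rightarrow\gamma_{1}$, which is exactly $\left(  \ref{lim-princ}\right)$ of Lemma \ref{Lemma-1} evaluated at $t_{n}=h=U_{H}\left(  n/k\right)$ after an integration by parts, giving $\widehat{\gamma}_{1}\overset{\mathbb{P}}{\rightarrow}\gamma_{1}$.

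For the weak limit I would bring in the two second-order conditions and use $\left(  \ref{TP-C2}\right)$, i.e.\ $D_{n}\left(  x\right)  =p^{-1}\sqrt{n/k}\,\mathcal{L}_{n}\left(  x^{-1/\gamma}\right)  +\mathcal{R}_{n}\left(  x\right)  +o_{\mathbb{P}}\left(  x^{-\tau/\gamma}\right)$ uniformly. Multiplying by $x^{-1}$ and integrating, the remainder is again $o_{\mathbb{P}}\left(  1\right)$. The deterministic part $\int_{1}^{\infty}x^{-1}\mathcal{R}_{n}\left(  x\right)  dx$ is computed from the elementary evaluations $\int_{1}^{\infty}x^{-1-1/\gamma}\left(  x^{\theta/\gamma}-1\right)  dx=\gamma\theta/\left(  1-\theta\right)$ for $\theta\in\left\{  \nu,\nu_{\ast}\right\}$ and $\int_{1}^{\infty}x^{-1-1/\gamma}dx=\gamma$; collecting the three coefficients reproduces $\mu_{n}$ of $\left(  \ref{mu-n}\right)$, up to the prefactor $p^{-1}$ and an $o\left(  1\right)$. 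For the random part, the substitution $w=x^{-1/\gamma}$ turns $\int_{1}^{\infty}x^{-1}\mathcal{L}_{n}\left(  x^{-1/\gamma}\right)  dx$ into $\gamma\int_{0}^{1}w^{-1}\mathcal{L}_{n}\left(  w\right)  dw$; inserting the definition of $\mathcal{L}_{n}$ and applying Fubini to the iterated integral, which gives $\int_{0}^{1}w^{-1}\int_{0}^{w}\left(  \cdot\right)  ds\,dw=\int_{0}^{1}\left(  -\log s\right)  \left(  \cdot\right)  ds$, produces precisely the three Brownian-bridge terms on the right-hand side of the displayed identity. Clearing the constants through $p^{-1}\gamma=\gamma_{1}$ yields the announced decomposition of $\sqrt{k}\gamma_{1}^{-1}\left(  \widehat{\gamma}_{1}-\gamma_{1}\right)  -p^{-1}\mu_{n}$. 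Integrability of $s^{-1}B_{n}^{\ast}\left(  \tfrac{k}{n}s\right)$ at $s=0$, needed both for these integrals and for the use of Fubini, is supplied by the a.s.\ modulus of continuity of the Brownian bridge at the origin.

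It remains to obtain the limiting law. Since the random side is a continuous linear functional of the single Brownian bridge $B_{n}$, it is exactly centred Gaussian for every $n$, so the whole task reduces to computing its asymptotic variance, the asymptotic mean $p^{-1}\mu$ coming at once from $\mu_{n}\rightarrow\mu$ under $\sqrt{k}A\left(  h\right)  \rightarrow\lambda$, $\sqrt{k}A_{1}\left(  h\right)  \rightarrow\lambda_{1}$, $\sqrt{k}A_{2}\left(  h\right)  \rightarrow\lambda_{2}$. Here I would use that, as $k/n\rightarrow0$, a Brownian bridge near its two endpoints behaves like Brownian motion: the rescaled processes $\left\{  \sqrt{n/k}\,B_{n}\left(  \tfrac{k}{n}t\right)  \right\}$ and $\left\{  -\sqrt{n/k}\,B_{n}\left(  1-\tfrac{k}{n}t\right)  \right\}$ converge jointly to two independent standard Brownian motions, whence $\sqrt{n/k}\,B_{n}^{\ast}\left(  \tfrac{k}{n}s\right)  =\sqrt{n/k}\left\{  B_{n}\left(  p\tfrac{k}{n}s\right)  -B_{n}\left(  1-\left(  1-p\right)  \tfrac{k}{n}s\right)  \right\}$ and $\sqrt{n/k}\,B_{n}\left(  p\tfrac{k}{n}s\right)$ acquire explicit Gaussian limits built from these two motions. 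Feeding those limits into the three summands and evaluating the variance of the resulting Wiener integrals via $\mathrm{Cov}\left(  W\left(  s\right)  ,W\left(  t\right)  \right)  =\min\left(  s,t\right)$ — that is, summing the diagonal and all cross contributions, which reduce to integrals $\iint\min\left(  s,t\right)  w\left(  s\right)  w\left(  t\right)  ds\,dt$ with weights $w\left(  s\right)  \in\left\{  1,s^{-1}\log s\right\}$ together with the terms $\min\left(  s,1\right)  =s$ coming from the non-integral summand $p^{-1}B_{n}^{\ast}\left(  \tfrac{k}{n}\right)$ — and simplifying, produces the variance $\left(  9-8p\right)  /p$ on the $\gamma_{1}^{-1}$-scale, hence $\left(  9-8p\right)  \gamma_{1}^{2}/p$ for $\sqrt{k}\left(  \widehat{\gamma}_{1}-\gamma_{1}\right)$.

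The main obstacle is precisely this last bookkeeping: correctly splitting $B_{n}^{\ast}$ into its two asymptotically independent endpoint pieces, keeping every cross-covariance among $\int_{0}^{1}s^{-1}B_{n}^{\ast}\,ds$, $B_{n}^{\ast}\left(  \tfrac{k}{n}\right)$ and $\int_{0}^{1}s^{-1}\log\left(  s\right)  \left\{  B_{n}^{\ast}-p^{-1}B_{n}\right\}  ds$, and carrying out the convergent but delicate weighted double integrals. The consistency step and the change-of-variable computations are otherwise routine, and the passage from the $B_{n}$-based expressions to the limiting Wiener integrals is a standard tightness/uniform-integrability argument based on the same a.s.\ modulus-of-continuity bounds near $0$ and near $1$.
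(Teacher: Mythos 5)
Your proposal is correct and follows essentially the same route as the paper: both proofs integrate the weak approximations of Theorem \ref{Theorem1} against $x^{-1}dx$, invoke Lemma \ref{Lemma-1} for $\int_{1}^{\infty}x^{-1}r_{n}\left(  x\right)  dx\rightarrow\gamma_{1}$, evaluate $\int_{1}^{\infty}x^{-1}\mathcal{R}_{n}\left(  x\right)  dx$ by elementary integrals to recover $\mu_{n}$, and convert $\int_{1}^{\infty}x^{-1}\mathcal{L}_{n}\left(  x^{-1/\gamma}\right)  dx$ into the stated Brownian-bridge functional via the substitution $w=x^{-1/\gamma}$ and Fubini. The final variance $\left(  9-8p\right)  \gamma_{1}^{2}/p$ is likewise only sketched in the paper (which defers the covariance bookkeeping to an earlier reference), so your outline of that computation is at the same level of detail as the original.
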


\begin{remark}
It is to be noted that for distributions in Hall's class of models,
(\citeauthor{Hall82}, \citeyear{Hall82}),\textbf{ }the three
assumptions\textbf{ }$\sqrt{k}A_{i}\left(  h\right)  =O\left(  1\right)
,$\textbf{ }$i=1,2$ and\textbf{ }$\sqrt{k}A\left(  h\right)  =O\left(
1\right)  $ may be gathered in a single one, namely $\sqrt{k}\left(
k/n\right)  ^{-\nu}=O\left(  1\right)  ,$ which is already used by
\cite{BBWG-16} in Theorem 1, written as $\sqrt{k}\left(  k/n\right)
^{\beta^{\ast}}=O\left(  1\right)  ,$ where $0<\beta^{\ast}=-\nu.$ Indeed, let
us assume that both $F$ and $G$ belong to this family, which contains the most
popular heavy-tailed cdf's, such as Burr, Fr\'{e}chet, Generalized Pareto,
Generalized Extreme Value, t-Student, ... Explicitly, there exist constants
$\gamma_{1},\gamma_{2}>0,$ $\eta_{F},\eta_{G}<0,$ $c_{F},c_{G}>0$ and
$d_{F},d_{G}\neq0,$ such that, as $x\rightarrow\infty$%
\[
\overline{F}\left(  x\right)  =c_{F}x^{-1/\gamma_{1}}\left(  1+d_{F}%
x^{\eta_{F}/\gamma_{1}}\left(  1+o\left(  1\right)  \right)  \right)  ,
\]
and%
\[
\overline{G}\left(  x\right)  =c_{G}x^{-1/\gamma_{2}}\left(  1+d_{G}%
x^{\eta_{G}/\gamma_{2}}\left(  1+o\left(  1\right)  \right)  \right)  .
\]
This implies that we have, as $x\rightarrow\infty$%
\[
\overline{H}\left(  x\right)  =cx^{-1/\gamma}\left(  1+dx^{\nu/\gamma}\left(
1+o\left(  1\right)  \right)  \right)  ,
\]
and
\[
\overline{H}^{\left(  1\right)  }\left(  x\right)  =c_{1}x^{-1/\gamma}\left(
1+d_{1}x^{\nu_{1}/\gamma}\left(  1+o\left(  1\right)  \right)  \right)  ,
\]
where $c:=c_{F}c_{G},$ $c_{1}:=pc,$ $\nu_{1}=\nu:=\max\left(  \eta_{F}%
,\eta_{G}\right)  ,$
\[%
\begin{array}
[c]{cc}%
d:=\left\{
\begin{tabular}
[c]{lll}%
$d_{F}$ & if & $\eta_{F}>\eta_{G},$\\
$d_{G}$ & if & $\eta_{F}<\eta_{G},$\\
$d_{F}+d_{G}$ & if & $\eta_{F}=\eta_{G}=\nu,$%
\end{tabular}
\right.  & d_{1}:=\left\{
\begin{tabular}
[c]{lll}%
$\frac{\eta_{F}-1}{p\eta_{F}-1}d_{F}$ & if & $\eta_{F}>\eta_{G},$\\
$\frac{1}{1-\left(  1-p\right)  \eta_{G}}d_{G}$ & if & $\eta_{F}<\eta_{G},$\\
$\frac{\eta_{1}-1}{p\eta_{F}+\left(  1-p\right)  \eta_{G}-1}\left(
d_{F}+d_{G}\right)  $ & if & $\eta_{F}=\eta_{G}=\nu.$%
\end{tabular}
\right.
\end{array}
\]
It is easy to check that both $\overline{H}$ and $\overline{H}^{\left(
1\right)  }$satisfy the second-order condition of regular variation with
convergence rates $A\left(  t\right)  \sim\nu\gamma dt^{\nu/\gamma}$ and
$A_{1}\left(  t\right)  \sim\nu\gamma d_{1}t^{\nu/\gamma},$ respectively.
Moreover, we have $\overline{H}^{\left(  1\right)  }\left(  x\right)
/\overline{H}\left(  x\right)  =p+p\left(  d_{1}-d\right)  x^{\nu/\gamma
}\left(  1+o\left(  1\right)  \right)  ,$ and therefore $A_{2}\left(
t\right)  \sim p\left(  d_{1}-d\right)  t^{\nu/\gamma}.$ In this context,
$U_{H}\left(  t\right)  \sim c^{-1}t^{\gamma},$ as $t\rightarrow\infty,$ thus
$h=U_{H}\left(  n/k\right)  \sim c^{-1}\left(  k/n\right)  ^{-\gamma},$ as
$n\rightarrow\infty.$ Consequently, by replacing $t$ by $c^{-1}\left(
k/n\right)  ^{-\gamma},$ we get that $\sqrt{k}A_{1}\left(  h\right)  ,$
$\sqrt{k}A_{2}\left(  h\right)  $ and $A\left(  h\right)  $ are indeed all of
convergence rate $\sqrt{k}\left(  k/n\right)  ^{-\nu}.$
\end{remark}

\begin{remark}
We emphasize that the asymptotic variance of $\widehat{\gamma}_{1}$ is
$\left(  9-8p\right)  $ times that of $\widehat{\gamma}_{1}^{\left(
EFG\right)  }.$ This factor corresponds to earlier findings for
weighted/kernel estimators of the extreme value index in non-censored data
cases, see for instance \cite{CDM} (complete data) and \cite{BchMN-16b}
(truncated data).
\end{remark}

\section{\textbf{Simulation study\label{sec3}}}

\noindent Now, we carry out an extensive simulation study to illustrate the
behavior of the proposed estimator $\widehat{\gamma}_{1}$ and compare its
performance, in terms of (absolute) bias and root of the mean squared error
(RMSE), with those of $\widehat{\gamma}_{1}^{\left(  EFG\right)  }$ and
$\widehat{\gamma}^{\left(  WW1\right)  }.$\ To this end, we consider Burr's,
Fr\'{e}chet's and the log-gamma models respectively defined, for $x>0,$ by:

\begin{itemize}
\item Burr $(\beta,\tau,\lambda):$ $F\left(  x\right)  =1-\left(  \dfrac
{\beta}{\beta+x^{\tau}}\right)  ^{\lambda},$ for $\beta,\tau,\lambda>0,$ with
$\gamma=1/(\tau\lambda).\medskip$

\item Fr\'{e}chet $\left(  \gamma\right)  :$ $F(x)=\exp\left(  -x^{-1/\gamma
}\right)  ,$ for $\gamma>0.\medskip$

\item log-gamma $(\gamma,\beta):$ $F\left(  x\right)  =\left(  \beta^{\gamma
}\Gamma\left(  \gamma\right)  \right)  ^{-1}\int_{0}^{x}t^{-1}\left(  \log
t\right)  ^{\gamma-1}\exp\left(  -\beta^{-1}\ln t\right)  dt,$ for $\gamma>0$
and $\beta>0.$
\end{itemize}

\noindent We select two values, $33\%$ and $70\%,$ for the proportion $p$ of
observed upper statistics and we make several combinations of the parameters
of each model. For each case, we generate $200$ samples of size $n=200$ and we
take our overall results (given in the form of graphical representations) by
averaging over all $200$ independent replications. We consider three censoring
schemes, namely Burr censored by Burr, Fr\'{e}chet censored by Fr\'{e}chet and
log-gamma censored by log-Gamma, that we illustrate by Figures \ref{M1},
\ref{M2} and \ref{M3} respectively. In each figure, we represent the biases
and the RMSE's of all three estimators $\widehat{\gamma}_{1},$ $\widehat
{\gamma}_{1}^{\left(  EFG\right)  }$ and $\widehat{\gamma}^{\left(
WW1\right)  }$ as functions of the number $k$ of the largest order statistics.
Our overall conclusion is that, from the top panels of all three figures, we
see that the newly proposed estimator $\widehat{\gamma}_{1}$ and the adapted
Hill one $\widehat{\gamma}_{1}^{\left(  EFG\right)  }$ perform almost equally
well and better than Worms's estimator $\widehat{\gamma}^{\left(  WW1\right)
}$ in the strong censoring case. However, the bottom panels of Figure \ref{M1}
and Figure \ref{M2} show that, for the weak censoring scenario, the latter has
a slight edge (especially for small values of $k)$ over the other two which
still behave almost similarly. This agrees with the conclusion of \cite{B17}
and means that $\widehat{\gamma}^{\left(  WW1\right)  }$ is not reliable
enough in the strong censoring situation. We notice, from Figure \ref{M3},
that when considering the log-gamma model with strong censoring, the estimator
$\widehat{\gamma}_{1}^{\left(  EFG\right)  }$ outperforms the remaining two.
But, with weak censoring, $\widehat{\gamma}_{1}$ is clearly better than
$\widehat{\gamma}_{1}^{\left(  EFG\right)  }$ while $\widehat{\gamma}^{\left(
WW1\right)  }$ does not even work in this case, which is something of very striking.%

\begin{figure}
[ptb]
\begin{center}
\includegraphics[
height=14.6712cm,
width=14.713cm
]%
{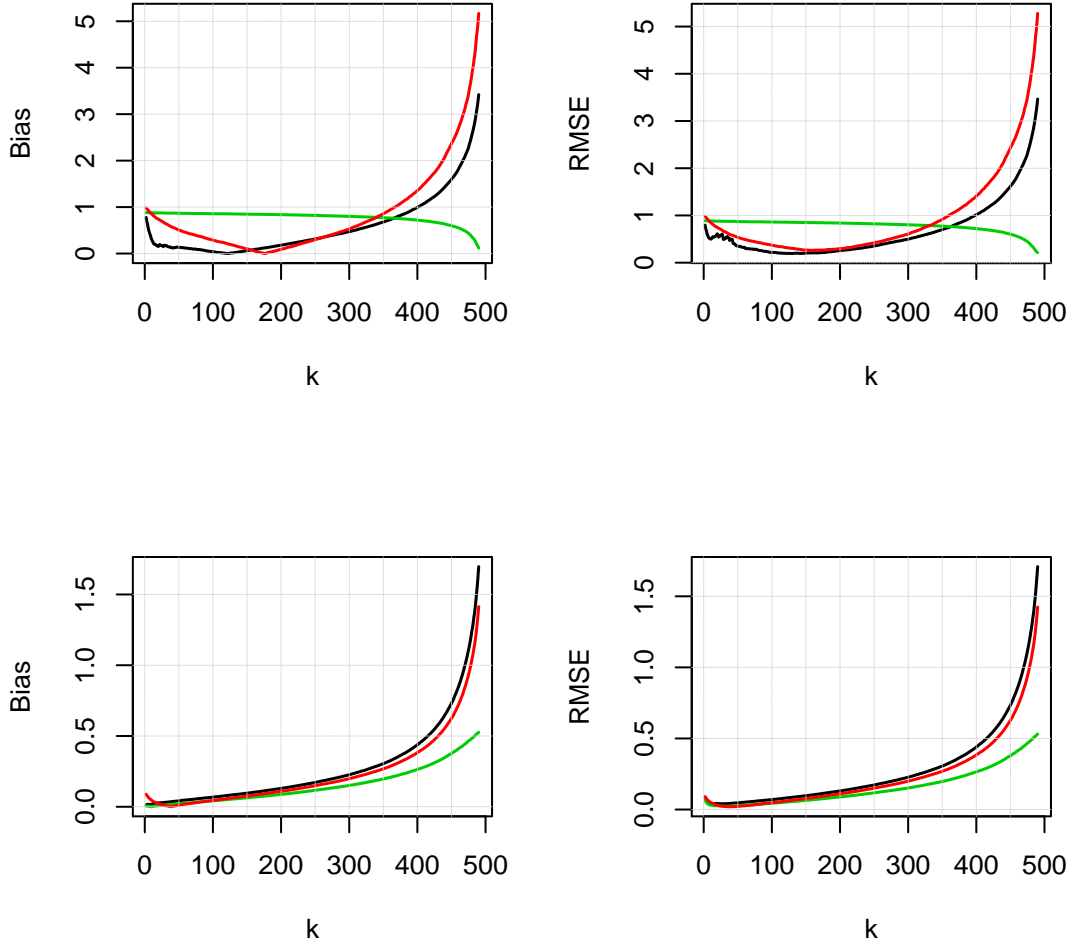}%
\caption{Bias (left panel) and RMSE (right panel) of $\widehat{\gamma}_{1}$
(red) $\widehat{\gamma}_{1}^{\left(  EFG\right)  }$ (black) and $\widehat
{\gamma}_{1}^{\left(  WW1\right)  }$ (green) based on $200$ samples of size
$200$ from a Burr distribution censored by another Burr model with $p=0.33$
(top) and $p=0.70$ (bottom).}%
\label{M1}%
\end{center}
\end{figure}
%

\begin{figure}
[ptb]
\begin{center}
\includegraphics[
height=5.7761in,
width=5.7925in
]%
{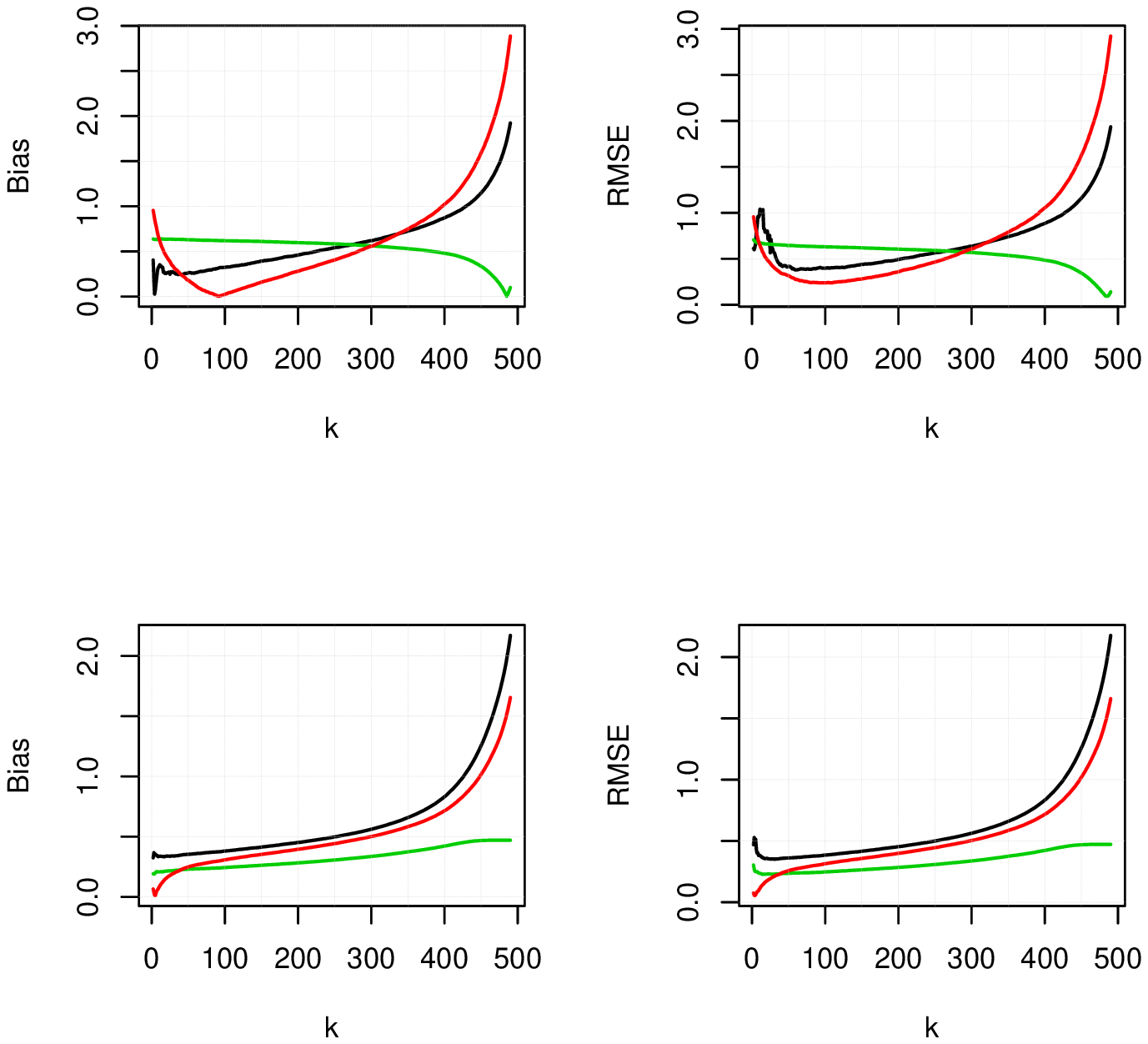}%
\caption{Bias (left panel) and RMSE (right panel) of $\widehat{\gamma}_{1}$
(red) $\widehat{\gamma}_{1}^{\left(  EFG\right)  }$ (black) and $\widehat
{\gamma}_{1}^{\left(  WW1\right)  }$ (green) based on $200$ samples of size
$200$ from a Fr\'{e}chet distribution censored by another Fr\'{e}chet model
with $p=0.33$ (top) and $p=0.70$ (bottom).}%
\label{M2}%
\end{center}
\end{figure}
%

\begin{figure}
[ptb]
\begin{center}
\includegraphics[
height=5.7761in,
width=5.7925in
]%
{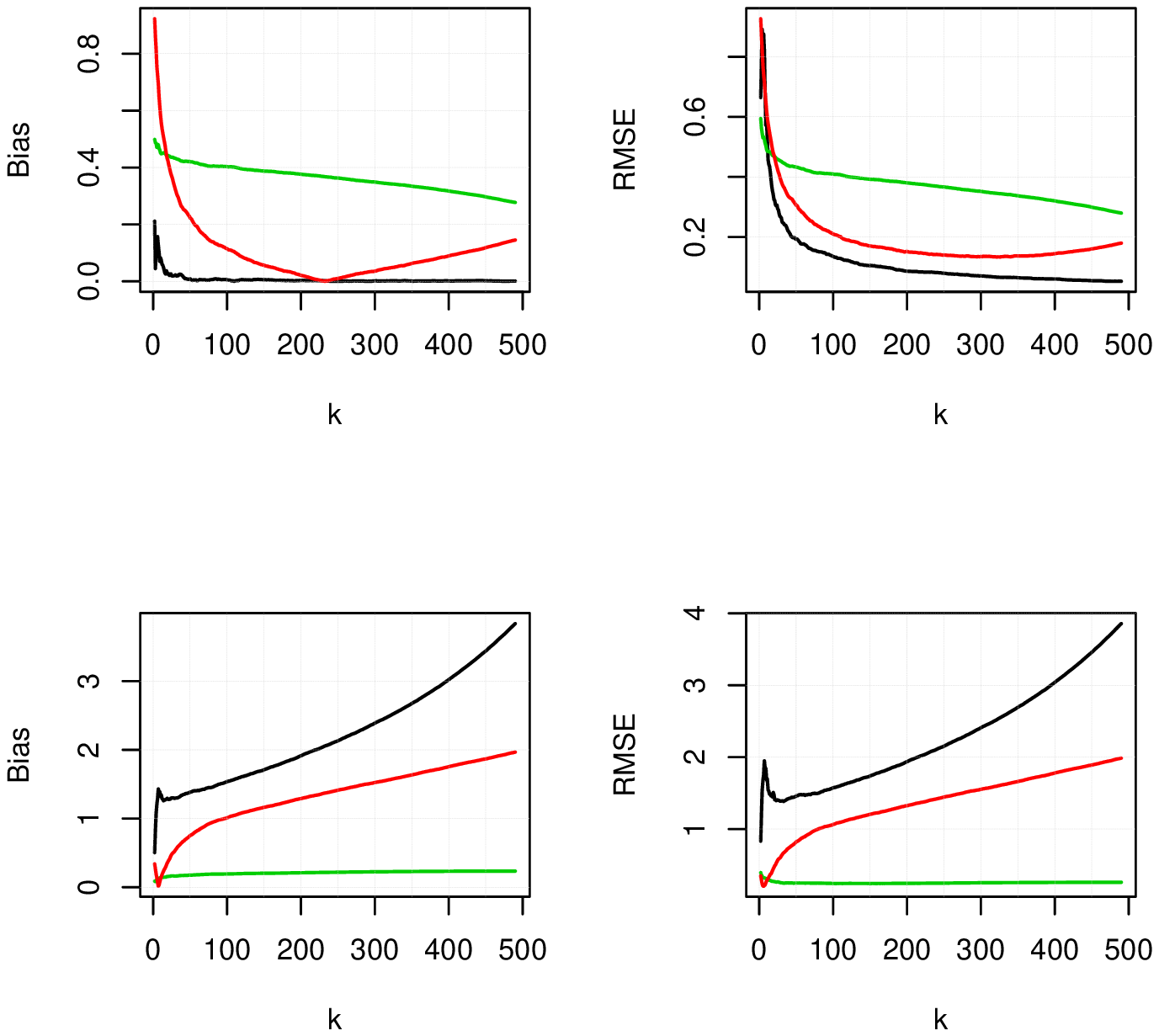}%
\caption{Bias (left panel) and RMSE (right panel) of $\widehat{\gamma}_{1}$
(red) $\widehat{\gamma}_{1}^{\left(  EFG\right)  }$ (black) and $\widehat
{\gamma}_{1}^{\left(  WW1\right)  }$ (green) based on $200$ samples of size
$200$ from a log-gamma distribution censored by another log-gamma model with
$p=0.33$ (top) and $p=0.70$ (bottom).}%
\label{M3}%
\end{center}
\end{figure}

\section{\textbf{Application to Australian Aids data\label{sec4}}}

The data file consists in $n=2843$ Australian patients who were diagnosed with
Aids on July $1^{st},$ 1991. The file contains the identification number, the
dates of first diagnosis, birth and death, as well as the state and the
encrypted transmission category. The data are available in the package "MASS"
of the statistical software R. Our objective is to apply the newly proposed
estimation procedure to evaluate the tail index $\gamma_{1}$ of the survival
time of the patients. To this end, we first select\textbf{ }the optimal number
of top statistics used in the estimate computation. By applying the adaptive
algorithm of Reiss and Thomas (see, \cite{ReTo07}, page 137), we find that
$522$ extreme observations are needed to obtain a proportion estimate value
$\widehat{p}=0.28.$ This represents a strong censoring rate of around $70\%$
for which $\widehat{\gamma}^{\left(  WW1\right)  }$ is not recommended for the
estimation of the tail index $\gamma_{1},$ as seen in Section \ref{sec3}.
Therefore, we only compute the other two estimates $\widehat{\gamma}_{1}$ and
$\widehat{\gamma}_{1}^{\left(  EFG\right)  }$ for which the abovementioned
technique gives $0.20.$ and $0.21$ respectively. Note that this latter value
differs from that found by \cite{EnFG08} who used a graphical approach which
is not as objective as a numerical procedure.

\section{\textbf{Proofs\label{sec5}}}

\subsection{Preliminaries}

Let $U_{i}:=\delta_{i}H^{\left(  1\right)  }\left(  Z_{i}\right)  +\left(
1-\delta_{i}\right)  \left(  \theta+H^{\left(  0\right)  }\left(
Z_{i}\right)  \right)  ,$ $i=1,...,n,$ be a sequence of iid rv's uniformly
distributed on $(0,1)$ \citep[][]{EnKo92}, and define the corresponding
empirical cdf and empirical process by%
\begin{equation}
\mathbb{U}_{n}\left(  s\right)  :=\frac{1}{n}\sum\limits_{i=1}^{n}%
\mathbf{1}\left\{  U_{i}\leq s\right\}  \text{ and }\alpha_{n}\left(
s\right)  :=\sqrt{n}\left(  \mathbb{U}_{n}\left(  s\right)  -s\right)  ,\text{
}0\leq s\leq1, \label{e}%
\end{equation}
respectively. Thereby, we may represent, almost surely, both $H_{n}^{\left(
0\right)  }$ and $H_{n}^{\left(  1\right)  }$ in term of $\mathbb{U}_{n},$ as
follows
\begin{equation}
H_{n}^{\left(  0\right)  }\left(  v\right)  =\mathbb{U}_{n}\left(  H^{\left(
0\right)  }\left(  v\right)  +\theta\right)  -\mathbb{U}_{n}\left(
\theta\right)  ,\text{ for }0<H^{\left(  0\right)  }\left(  v\right)
<1-\theta, \label{HN0}%
\end{equation}
and
\begin{equation}
H_{n}^{\left(  1\right)  }\left(  v\right)  =\mathbb{U}_{n}\left(  H^{\left(
1\right)  }\left(  v\right)  \right)  ,\text{ for }0<H^{\left(  1\right)
}\left(  v\right)  <\theta. \label{HN1}%
\end{equation}
For more details, one refers to \cite{DeEn96}. Therefore, in view of the
previous representations, we have almost surely%
\begin{equation}
\sqrt{n}\left(  \overline{H}_{n}^{\left(  1\right)  }\left(  v\right)
-\overline{H}^{\left(  1\right)  }\left(  v\right)  \right)  =\alpha
_{n}\left(  \theta\right)  -\alpha_{n}\left(  \theta-\overline{H}^{\left(
1\right)  }\left(  v\right)  \right)  ,\text{ for }0<\overline{H}^{\left(
1\right)  }\left(  v\right)  <\theta, \label{rep-H1}%
\end{equation}
and%
\begin{equation}
\sqrt{n}\left(  \overline{H}_{n}^{\left(  0\right)  }\left(  v\right)
-\overline{H}^{\left(  0\right)  }\left(  v\right)  \right)  =-\alpha
_{n}\left(  1-\overline{H}^{\left(  0\right)  }\left(  v\right)  \right)
,\text{ for }0<\overline{H}^{\left(  0\right)  }\left(  v\right)  <1-\theta.
\label{rep-H0}%
\end{equation}
Our methodology strongly relies on the well-known Gaussian approximation,
given by \cite{CsCsHM86} in Corollary 2.1, which says that on the probability
space $\left(  \Omega,\mathcal{A},\mathbb{P}\right)  ,$ there exists a
sequence of Brownian bridges $\left\{  B_{n}\left(  s\right)  ;\text{ }0\leq
s\leq1\right\}  $ such that for every $0<\lambda<\infty$ and $0\leq\eta<1/4,$%
\begin{equation}
\sup_{\lambda/n\leq s\leq1-\lambda/n}\frac{n^{\eta}\left\vert \alpha
_{n}\left(  s\right)  -B_{n}\left(  s\right)  \right\vert }{\left(  s\left(
1-s\right)  \right)  ^{1/2-\eta}}=O_{\mathbb{P}}\left(  1\right)  .
\label{approx}%
\end{equation}
For the increments $\alpha_{n}\left(  \theta\right)  -\alpha_{n}\left(
\theta-s\right)  ,$ we will need an approximation of the same type as $\left(
\ref{approx}\right)  .$ Following similar arguments, mutatis mutandis, as
those used to in the proof of assertions $\left(  2.2\right)  $ of Theorem 2.1
and $\left(  2.8\right)  $ of Theorem 2.2 in \cite{CsCsHM86}, \cite{Necir2017}%
, recently showed, in a technical report, that for every $0<\theta<1,$
$0<\lambda<\infty$ and $0\leq\eta<1/4,$ one also has%
\begin{equation}
\sup_{\lambda/n\leq s<\theta}\frac{n^{\eta}\left\vert \left\{  \alpha
_{n}\left(  \theta\right)  -\alpha_{n}\left(  \theta-s\right)  \right\}
-B_{n}\left(  s\right)  \right\vert }{s^{1/2-\eta}}=O_{\mathbb{P}}\left(
1\right)  . \label{approx2}%
\end{equation}

\subsection{Proof of Theorem \ref{Theorem1}}

To get the asymptotic weak approximation given in Theorem \ref{Theorem1}, we
will perform successive decompositions that will produce several remainder
terms $\mathbf{R}_{ni}\left(  x\right)  .$ We show, in Lemmas \ref{Lemma-6},
\ref{Lemma-8}, \ref{Lemma-9}, \ref{Lemma-10}, \ref{Lemma-12}, \ref{Lemma-13}
and \ref{Lemma-14} of the Appendix, that for a every fixed $0\leq\tau<1/8$ and
for all large $n,$ we have $\mathbf{R}_{ni}\left(  x\right)  =o_{\mathbb{P}%
}\left(  x^{-\tau/\gamma}\right)  ,$ for $i=1,...,16,$ uniformly over
$x\geq1.$ Then, once one of the remainder terms appears in the following
decompositions it is systematically replaced by $o_{\mathbb{P}}\left(
x^{-\tau/\gamma}\right)  .$ Let us begin by setting $\varphi_{n}\left(
u\right)  :=\left(  u+k^{-1}\right)  ^{-1}$ and rewrite both $\left(
\ref{rn}\right)  $ and $\left(  \ref{deltan}\right)  $ into%
\[
\Delta_{n}\left(  x\right)  =\frac{n}{k}\int_{xZ_{n-k:n}}^{\infty}\varphi
_{n}\left(  \frac{\overline{H}_{n}^{\left(  1\right)  }\left(  w\right)
}{\overline{H}_{n}\left(  w\right)  }\right)  dH_{n}\left(  w\right)  \text{
and }r_{n}\left(  x\right)  =\frac{n}{k}\int_{xh}^{\infty}\varphi_{n}\left(
\frac{\overline{H}^{\left(  1\right)  }\left(  w\right)  }{\overline{H}\left(
w\right)  }\right)  dH\left(  w\right)  .
\]
Observe now that $\sqrt{k}\left(  \Delta_{n}\left(  x\right)  -r_{n}\left(
x\right)  \right)  =:T_{n}\left(  x\right)  $ may be rewritten into%
\begin{align*}
&  \frac{n}{k}\int_{xZ_{n-k:n}}^{\infty}\left\{  \varphi_{n}\left(
\frac{\overline{H}_{n}^{\left(  1\right)  }\left(  w\right)  }{\overline
{H}_{n}\left(  w\right)  }\right)  -\varphi_{n}\left(  \frac{\overline
{H}^{\left(  1\right)  }\left(  w\right)  }{\overline{H}\left(  w\right)
}\right)  \right\}  dH_{n}\left(  w\right) \\
&  +\frac{n}{\sqrt{k}}\int_{xZ_{n-k:n}}^{\infty}\varphi_{n}\left(
\frac{\overline{H}^{\left(  1\right)  }\left(  w\right)  }{\overline{H}\left(
w\right)  }\right)  d\left(  H_{n}\left(  w\right)  -H\left(  w\right)
\right)  +\frac{n}{k}\int_{xZ_{n-k:n}}^{xh}\varphi_{n}\left(  \frac
{\overline{H}^{\left(  1\right)  }\left(  w\right)  }{\overline{H}\left(
w\right)  }\right)  dH\left(  w\right)  .
\end{align*}
In view of Taylor's expansion, we write $\varphi_{n}\left(  u\right)
=u^{-1}+O\left(  k^{-1}\right)  u^{-2},$ then by using this latter twice, for
$u=\overline{H}_{n}^{\left(  1\right)  }\left(  w\right)  /\overline{H}%
_{n}\left(  w\right)  $ and $u=\overline{H}^{\left(  1\right)  }\left(
w\right)  /\overline{H}\left(  w\right)  ,$ we may decompose $T_{n}\left(
x\right)  $ into the sum of%
\begin{align*}
&  \frac{n}{\sqrt{k}}\int_{xZ_{n-k:n}}^{\infty}\left(  \frac{\overline{H}%
_{n}\left(  w\right)  }{\overline{H}_{n}^{\left(  1\right)  }\left(  w\right)
}-\frac{\overline{H}\left(  w\right)  }{\overline{H}^{\left(  1\right)
}\left(  w\right)  }\right)  dH_{n}\left(  w\right) \\
&  -\frac{n}{\sqrt{k}}\int_{xZ_{n-k:n}}^{\infty}\frac{\overline{H}\left(
w\right)  }{\overline{H}^{\left(  1\right)  }\left(  w\right)  }d\left(
H_{n}\left(  w\right)  -H\left(  w\right)  \right)  +\frac{n}{k}%
\int_{xZ_{n-k:n}}^{xh}\frac{\overline{H}\left(  w\right)  }{\overline
{H}^{\left(  1\right)  }\left(  w\right)  }dH\left(  w\right)  ,
\end{align*}
and three remainder terms%
\[
\mathbf{R}_{n1}\left(  x\right)  :=O_{\mathbb{P}}\left(  k^{-1}\right)
\frac{n}{\sqrt{k}}\int_{xZ_{n-k:n}}^{\infty}\left\{  \left(  \frac
{\overline{H}_{n}\left(  w\right)  }{\overline{H}_{n}^{\left(  1\right)
}\left(  w\right)  }\right)  ^{2}+\left(  \frac{\overline{H}\left(  w\right)
}{\overline{H}^{\left(  1\right)  }\left(  w\right)  }\right)  ^{2}\right\}
dH_{n}\left(  w\right)  ,
\]%
\[
\mathbf{R}_{n2}\left(  x\right)  :=O_{\mathbb{P}}\left(  k^{-1}\right)
\frac{n}{\sqrt{k}}\int_{xZ_{n-k:n}}^{\infty}\left(  \frac{\overline{H}\left(
w\right)  }{\overline{H}^{\left(  1\right)  }\left(  w\right)  }\right)
^{2}d\left(  H_{n}\left(  w\right)  -H\left(  w\right)  \right)
\]
and%
\[
R_{n3}\left(  x\right)  :=O_{\mathbb{P}}\left(  k^{-1}\right)  \frac{n}%
{\sqrt{k}}\int_{xZ_{n-k:n}}^{\infty}\frac{\overline{H}\left(  w\right)
}{\overline{H}^{\left(  1\right)  }\left(  w\right)  }dH\left(  w\right)  .
\]
Let us now also decompose $T_{n}\left(  x\right)  $ into the sum of%
\[
T_{n1}\left(  x\right)  :=\frac{n}{\sqrt{k}}\int_{xZ_{n-k:n}}^{\infty}%
\frac{\overline{H}_{n}\left(  w\right)  -\overline{H}\left(  w\right)
}{\overline{H}_{n}^{\left(  1\right)  }\left(  w\right)  }dH_{n}\left(
w\right)  ,
\]%
\[
T_{n2}\left(  x\right)  :=\frac{n}{\sqrt{k}}\int_{xZ_{n-k:n}}^{\infty}\left(
\frac{1}{\overline{H}_{n}^{\left(  1\right)  }\left(  w\right)  }-\frac
{1}{\overline{H}^{\left(  1\right)  }\left(  w\right)  }\right)  \overline
{H}\left(  w\right)  dH_{n}\left(  w\right)  ,
\]%
\[
T_{n3}\left(  x\right)  :=\frac{n}{\sqrt{k}}\int_{xZ_{n-k:n}}^{\infty}%
\frac{\overline{H}\left(  w\right)  }{\overline{H}^{\left(  1\right)  }\left(
w\right)  }d\left(  H_{n}\left(  w\right)  -H\left(  w\right)  \right)  ,
\]
and%
\[
T_{n4}\left(  x\right)  :=\frac{n}{\sqrt{k}}\int_{xZ_{n-k:n}}^{xh}%
\frac{\overline{H}\left(  w\right)  }{\overline{H}^{\left(  1\right)  }\left(
w\right)  }dH\left(  w\right)  .
\]
For the purpose of establishing Gaussian approximations to $T_{ni}\left(
x\right)  ,$ we introduce the following two crucial tail empirical processes
$\beta_{n}$ and $\beta_{n}^{\ast}$ defined, for $w\geq1,$ by%
\begin{equation}
\beta_{n}\left(  w\right)  :=\dfrac{n}{\sqrt{k}}\left(  \overline{H}%
_{n}^{\left(  1\right)  }\left(  Z_{n-k:n}w\right)  -\overline{H}^{\left(
1\right)  }\left(  Z_{n-k:n}w\right)  \right)  , \label{betan}%
\end{equation}
and%
\begin{equation}
\beta_{n}^{\ast}\left(  w\right)  :=\dfrac{n}{\sqrt{k}}\left(  \overline
{H}_{n}\left(  Z_{n-k:n}w\right)  -\overline{H}\left(  Z_{n-k:n}w\right)
\right)  . \label{beta start}%
\end{equation}

\subsubsection{Asymptotic representations to $T_{ni}\left(  x\right)  $ in
terms of $\beta_{n}$ and $\beta_{n}^{\ast}$}

Let us now decompose $T_{n1}\left(  x\right)  $ into the sum of%
\[
T_{n1}^{\left(  1\right)  }\left(  x\right)  :=\frac{n}{\sqrt{k}}%
\int_{xZ_{n-k:n}}^{\infty}\frac{\overline{H}_{n}\left(  w\right)
-\overline{H}\left(  w\right)  }{\overline{H}^{\left(  1\right)  }\left(
w\right)  }dH_{n}\left(  w\right)  ,
\]
and%
\[
\mathbf{R}_{n4}\left(  x\right)  :=\frac{n}{\sqrt{k}}\int_{xZ_{n-k:n}}%
^{\infty}\left(  \frac{1}{\overline{H}_{n}^{\left(  1\right)  }\left(
w\right)  }-\frac{1}{\overline{H}^{\left(  1\right)  }\left(  w\right)
}\right)  \left(  \overline{H}_{n}\left(  w\right)  -\overline{H}\left(
w\right)  \right)  dH_{n}\left(  w\right)  .
\]
The change of variable $w=Z_{n-k:n}u$ and the definition $\left(
\ref{beta start}\right)  ,$ yield that%
\[
T_{n1}^{\left(  1\right)  }\left(  x\right)  =\int_{x}^{\infty}\frac{\beta
_{n}^{\ast}\left(  u\right)  }{\overline{H}^{\left(  1\right)  }\left(
Z_{n-k:n}u\right)  }dH_{n}\left(  Z_{n-k:n}u\right)  ,
\]
and%
\[
\mathbf{R}_{n4}\left(  x\right)  =\int_{x}^{\infty}\left(  \frac{1}%
{\overline{H}_{n}^{\left(  1\right)  }\left(  Z_{n-k:n}u\right)  }-\frac
{1}{\overline{H}^{\left(  1\right)  }\left(  Z_{n-k:n}u\right)  }\right)
\beta_{n}^{\ast}\left(  u\right)  dH_{n}\left(  Z_{n-k:n}u\right)  .
\]
The term $T_{n1}^{\left(  1\right)  }\left(  x\right)  $ may in turn be
decomposed into%
\[
T_{n1}^{\left(  2\right)  }\left(  x\right)  :=\frac{1}{p}\frac{n}{k}\int
_{x}^{\infty}u^{1/\gamma}\beta_{n}^{\ast}\left(  u\right)  dH_{n}\left(
Z_{n-k:n}u\right)  ,
\]
and%
\[
\mathbf{R}_{n5}\left(  x\right)  :=\int_{x}^{\infty}\left\{  \frac
{1}{\overline{H}^{\left(  1\right)  }\left(  Z_{n-k:n}u\right)  }-\frac{n}%
{k}\frac{1}{p}u^{1/\gamma}\right\}  \beta_{n}^{\ast}\left(  u\right)
dH_{n}\left(  Z_{n-k:n}u\right)  .
\]
Thus, we end up with
\[
T_{n1}\left(  x\right)  =\frac{1}{p}\frac{n}{k}\int_{x}^{\infty}u^{1/\gamma
}\beta_{n}^{\ast}\left(  u\right)  dH_{n}\left(  Z_{n-k:n}u\right)
+o_{\mathbb{P}}\left(  x^{-\tau/\gamma}\right)  .
\]
Now, we use similar decompositions to the second term $T_{n2}\left(  x\right)
.$ Observe that this latter equals%
\[
T_{n2}^{\left(  1\right)  }\left(  x\right)  :=-\int_{x}^{\infty}%
\frac{\overline{H}\left(  Z_{n-k:n}u\right)  }{\left[  \overline{H}^{\left(
1\right)  }\left(  Z_{n-k:n}u\right)  \right]  ^{2}}\beta_{n}\left(  u\right)
dH_{n}\left(  Z_{n-k:n}u\right)  ,
\]
plus a remainder term%
\[
\mathbf{R}_{n6}\left(  x\right)  :=\int_{x}^{\infty}\left\{  \frac
{1}{\overline{H}_{n}^{\left(  1\right)  }\left(  Z_{n-k:n}u\right)  }-\frac
{1}{\overline{H}^{\left(  1\right)  }\left(  Z_{n-k:n}u\right)  }\right\}
\frac{\overline{H}\left(  Z_{n-k:n}u\right)  }{\left[  \overline{H}^{\left(
1\right)  }\left(  Z_{n-k:n}u\right)  \right]  ^{2}}\beta_{n}\left(  u\right)
dH_{n}\left(  Z_{n-k:n}u\right)  .
\]
Likewise, $T_{n2}^{\left(  1\right)  }\left(  x\right)  $ may be rewritten as
the sum of%
\[
T_{n2}^{\left(  2\right)  }\left(  x\right)  :=-\frac{1}{p^{2}}\frac{n}{k}%
\int_{x}^{\infty}u^{1/\gamma}\beta_{n}\left(  u\right)  dH_{n}\left(
Z_{n-k:n}u\right)  ,
\]
and%
\[
\mathbf{R}_{n7}\left(  x\right)  :=-\int_{x}^{\infty}\left\{  \frac
{\overline{H}\left(  Z_{n-k:n}u\right)  }{\left(  \overline{H}^{\left(
1\right)  }\left(  Z_{n-k:n}u\right)  \right)  ^{2}}-\frac{1}{p^{2}}\frac
{n}{k}u^{1/\gamma}\right\}  \beta_{n}\left(  u\right)  dH_{n}\left(
Z_{n-k:n}u\right)  .
\]
Thereby%
\[
T_{n2}\left(  x\right)  =-\frac{1}{p^{2}}\frac{n}{k}\int_{x}^{\infty
}u^{1/\gamma}\beta_{n}\left(  u\right)  dH_{n}\left(  Z_{n-k:n}u\right)
+o_{\mathbb{P}}\left(  x^{-\tau/\gamma}\right)  .
\]
For the third term $T_{n3}\left(  x\right)  ,$ we make a change of variables
and an integration by parts to get%
\[
T_{n3}\left(  x\right)  =\frac{\overline{H}\left(  xZ_{n-k:n}\right)
}{\overline{H}^{\left(  1\right)  }\left(  xZ_{n-k:n}\right)  }\beta_{n}%
^{\ast}\left(  x\right)  +\int_{x}^{\infty}\beta_{n}^{\ast}\left(  u\right)
d\left\{  \frac{\overline{H}\left(  Z_{n-k:n}u\right)  }{\overline{H}^{\left(
1\right)  }\left(  Z_{n-k:n}u\right)  }\right\}  .
\]
Making use of Lemma \ref{Lemma-3}, we get $T_{n3}\left(  x\right)  =\dfrac
{1}{p}\beta_{n}^{\ast}\left(  x\right)  +o_{\mathbb{P}}\left(  x^{-\tau
/\gamma}\right)  .$ The fourth term $T_{n4}\left(  x\right)  $ may in turn be
rewritten as the sum of%
\[
T_{n4}^{\left(  1\right)  }\left(  x\right)  :=p^{-1}\frac{n}{\sqrt{k}%
}\overline{H}\left(  xh\right)  \left\{  \left(  \frac{Z_{n-k:n}}{h}\right)
^{-1/\gamma}-1\right\}  ,
\]
plus two remainder terms%
\[
\mathbf{R}_{n8}\left(  x\right)  :=\frac{n}{\sqrt{k}}\int_{xZ_{n-k:n}}%
^{xh}\left\{  \frac{\overline{H}\left(  w\right)  }{\overline{H}^{\left(
1\right)  }\left(  w\right)  }-p^{-1}\right\}  dH\left(  w\right)  ,
\]
and%
\[
\mathbf{R}_{n9}\left(  x\right)  :=p^{-1}\frac{n}{\sqrt{k}}\overline{H}\left(
xh\right)  \left\{  \frac{\overline{H}\left(  xZ_{n-k:n}\right)  }%
{\overline{H}\left(  xh\right)  }-\left(  \frac{Z_{n-k:n}}{h}\right)
^{-1/\gamma}\right\}  .
\]
Note that $\sqrt{k}\left(  Z_{n-k:n}/h-1\right)  $ is asymptotically Gaussian,
it follows that $Z_{n-k:n}/h\overset{\mathbb{P}}{\rightarrow}1$ and $\sqrt
{k}\left(  Z_{n-k:n}/h-1\right)  =O_{\mathbb{P}}\left(  1\right)  $
\citep[see, for instance, Theorem 2.1 (assertion 2.7)  in ][]{BMN-2015}$.$ On
the other hand, from proposition \ref{Potter} (see the Appendix), we infer
that $\dfrac{n}{k}\overline{H}\left(  xh\right)  =\overline{H}\left(
xh\right)  /\overline{H}\left(  h\right)  \leq\left(  1+\epsilon\right)
x^{-1/\gamma+\epsilon}.$ Then, by using the mean value theorem, in
$T_{n4}^{\left(  1\right)  }\left(  x\right)  ,$ yields%
\[
T_{n4}^{\left(  1\right)  }\left(  x\right)  =-\frac{1}{p\gamma}x^{-1/\gamma
}\sqrt{k}\left(  \frac{Z_{n-k:n}}{h}-1\right)  +o_{\mathbb{P}}\left(
x^{-\tau/\gamma}\right)  .
\]
From assertion $\left(  3.22\right)  $ in \cite{BMN-2015}, we have $\sqrt
{k}\left(  Z_{n-k:n}/h-1\right)  =\gamma\beta_{n}^{\ast}\left(  1\right)
+o_{\mathbb{P}}\left(  1\right)  ,$ it follows that
\[
T_{n4}^{\left(  1\right)  }\left(  x\right)  =-\dfrac{1}{p}x^{-1/\gamma}%
\beta_{n}^{\ast}\left(  1\right)  +o_{\mathbb{P}}\left(  x^{-\tau/\gamma
}\right)  =T_{n4}\left(  x\right)  .
\]
To summarize, we showed that
\[
\sqrt{k}\left(  \Delta_{n}\left(  x\right)  -r_{n}\left(  x\right)  \right)
=\sum_{i=1}^{3}\mathbb{T}_{ni}\left(  x\right)  +o_{\mathbb{P}}\left(
x^{-\tau/\gamma}\right)  ,
\]
where%
\begin{equation}
\mathbb{T}_{n1}\left(  x\right)  :=-\frac{1}{p}\int_{x}^{\infty}u^{1/\gamma
}\beta_{n}^{\ast}\left(  u\right)  d\left(  \frac{n}{k}\overline{H}_{n}\left(
Z_{n-k:n}u\right)  \right)  , \label{tn1}%
\end{equation}%
\[
\mathbb{T}_{n2}\left(  x\right)  :=\frac{1}{p^{2}}\int_{x}^{\infty}%
u^{1/\gamma}\beta_{n}\left(  u\right)  d\left(  \frac{n}{k}\overline{H}%
_{n}\left(  Z_{n-k:n}u\right)  \right)  ,
\]
and
\[
\mathbb{T}_{n3}\left(  x\right)  :=\dfrac{1}{p}\left(  \beta_{n}^{\ast}\left(
x\right)  -x^{-1/\gamma}\beta_{n}^{\ast}\left(  1\right)  \right)  .
\]
In the following Section we provide Gaussian approximations to $\mathbb{T}%
_{ni}\left(  x\right)  ,$ $i=1,2,3.$

\subsection{Gaussian approximation to $\mathbb{T}_{ni}\left(  x\right)  $}

We show that%
\begin{equation}
\mathbb{T}_{n1}\left(  x\right)  =-\frac{1}{p}\int_{0}^{x^{-1/\gamma}}%
s^{-1}\sqrt{\frac{n}{k}}B_{n}^{\ast}\left(  \frac{k}{n}s\right)
ds+o_{\mathbb{P}}\left(  x^{-\tau/\gamma}\right)  , \label{approxTn1}%
\end{equation}%
\begin{equation}
\mathbb{T}_{n2}\left(  x\right)  =\frac{1}{p^{2}}\int_{0}^{x^{-1/\gamma}%
}s^{-1}\sqrt{\frac{n}{k}}B_{n}\left(  p\frac{k}{n}s\right)  ds+o_{\mathbb{P}%
}\left(  x^{-\tau/\gamma}\right)  , \label{approxTn2}%
\end{equation}
and%
\begin{equation}
\mathbb{T}_{n3}\left(  x\right)  =\frac{1}{p}\sqrt{\frac{n}{k}}\left\{
B_{n}^{\ast}\left(  \frac{k}{n}x^{-1/\gamma}\right)  -\dfrac{1}{p}%
x^{-1/\gamma}B_{n}^{\ast}\left(  \frac{k}{n}\right)  \right\}  +o_{\mathbb{P}%
}\left(  x^{-\tau/\gamma}\right)  , \label{approxTn3}%
\end{equation}
where $B_{n}^{\ast}$ is the centred Gaussian process given in Theorem
\ref{Theorem1}. We will only give details for $\left(  \ref{approxTn1}\right)
$ since the proofs of $\left(  \ref{approxTn2}\right)  $ and $\left(
\ref{approxTn3}\right)  $ follow by using similar arguments. Let us also
introduce the following Gaussian processes that we define, for $0<\overline
{H}^{\left(  1\right)  }\left(  v\right)  <\theta,$ by%
\begin{equation}
\mathbf{B}_{n}\left(  v\right)  :=B_{n}\left(  \overline{H}^{\left(  1\right)
}\left(  v\right)  \right)  \text{ and }\mathbf{B}_{n}^{\ast}\left(  v\right)
:=B_{n}\left(  \overline{H}^{\left(  1\right)  }\left(  v\right)  \right)
-B_{n}\left(  1-\overline{H}^{\left(  0\right)  }\left(  v\right)  \right)  .
\label{Bn}%
\end{equation}
It is clear that $\mathbb{T}_{n1}\left(  x\right)  ,$ given in $\left(
\ref{tn1}\right)  ,$ may be rewritten as the sum of%
\[
\mathbb{T}_{n1}^{\left(  1\right)  }\left(  x\right)  :=-\frac{1}{p}\int
_{x}^{\infty}u^{1/\gamma}\sqrt{\frac{n}{k}}B_{n}^{\ast}\left(  \frac{k}%
{n}u^{-1/\gamma}\right)  d\left\{  \frac{n}{k}\overline{H}_{n}\left(
Z_{n-k:n}u\right)  \right\}  ,\medskip
\]
and two remainder terms%
\[
\mathbf{R}_{n10}\left(  x\right)  :=-\frac{1}{p}\int_{x}^{\infty}u^{1/\gamma
}\left\{  \beta_{n}^{\ast}\left(  u\right)  -\sqrt{\frac{n}{k}}\mathbf{B}%
_{n}^{\ast}\left(  Z_{n-k:n}u\right)  \right\}  d\left\{  \frac{n}{k}%
\overline{H}_{n}\left(  Z_{n-k:n}u\right)  \right\}  ,\medskip
\]
and%
\[
\mathbf{R}_{n11}\left(  x\right)  :=-\frac{1}{p}\int_{x}^{\infty}u^{1/\gamma
}\sqrt{\frac{n}{k}}\left\{  \mathbf{B}_{n}^{\ast}\left(  Z_{n-k:n}u\right)
-B_{n}^{\ast}\left(  \frac{k}{n}u^{-1/\gamma}\right)  \right\}  d\left\{
\frac{n}{k}\overline{H}_{n}\left(  Z_{n-k:n}u\right)  \right\}  .\medskip
\]
Let us now focus on the term $\mathbb{T}_{n1}^{\left(  1\right)  }\left(
x\right)  .$ Since $x\geq1$ then $H_{n}\left(  w\right)  =1,$ for $w\geq
xZ_{n:n},$ it follows that $H_{n}\left(  Z_{n-k:n}u\right)  =1,$ for $u\geq
xZ_{n:n}/Z_{n-k:n},$ therefore%
\[
\mathbb{T}_{n1}^{\left(  1\right)  }\left(  x\right)  =-\frac{1}{p}\int
_{x}^{xZ_{n:n}/Z_{n-k:n}}u^{1/\gamma}\sqrt{\frac{n}{k}}B_{n}^{\ast}\left(
\frac{k}{n}u^{-1/\gamma}\right)  d\left\{  \frac{n}{k}\overline{H}_{n}\left(
Z_{n-k:n}u\right)  \right\}  .\medskip
\]
Let $Q_{n}\left(  t\right)  :=\inf\left\{  w:H_{n}\left(  w\right)  \geq
t\right\}  ,$ $0<t<1,$ be the empirical quantile function pertaining to cdf
$H_{n}.$ For convenience, we set%
\begin{equation}
\ell_{n}\left(  s\right)  :=Q_{n}\left(  1-ks/n\right)  /Z_{n-k:n},
\label{ln(s)}%
\end{equation}
and make the change of variable $u=\ell_{n}\left(  s\right)  $ to get%
\[
\mathbb{T}_{n1}^{\left(  1\right)  }\left(  x\right)  =\frac{1}{p}\int
_{\frac{n}{k}\overline{H}_{n}\left(  Z_{n:n}x\right)  }^{\frac{n}{k}%
\overline{H}_{n}\left(  Z_{n-k:n}x\right)  }\left(  \ell_{n}\left(  s\right)
\right)  ^{1/\gamma}\sqrt{\frac{n}{k}}B_{n}^{\ast}\left(  \frac{k}{n}\left(
\ell_{n}\left(  s\right)  \right)  ^{-1/\gamma}\right)  ds.\medskip
\]
In view of the algebraic equation $a_{n}b_{n}=\left(  a_{n}-a\right)  \left(
b_{n}-b\right)  +\left(  a_{n}-a\right)  b+a\left(  b_{n}-b\right)  +ab,$ we
decompose $\mathbb{T}_{n1}^{\left(  1\right)  }\left(  x\right)  $ into the
sum of%
\[
\mathbb{T}_{n1}^{\left(  2\right)  }\left(  x\right)  :=\frac{1}{p}\int
_{\frac{n}{k}\overline{H}_{n}\left(  Z_{n:n}x\right)  }^{\frac{n}{k}%
\overline{H}_{n}\left(  Z_{n-k:n}x\right)  }s^{-1}\sqrt{\frac{n}{k}}%
B_{n}^{\ast}\left(  \frac{k}{n}s\right)  ds,\medskip
\]
and three remainder terms%
\[
\mathbf{R}_{n12}\left(  x\right)  :=\frac{1}{p}\int_{\frac{n}{k}\overline
{H}_{n}\left(  Z_{n-k:n}x\right)  }^{\frac{n}{k}\overline{H}_{n}\left(
Z_{n:n}x\right)  }\left(  \left(  \ell_{n}\left(  s\right)  \right)
^{1/\gamma}-s^{-1}\right)  \sqrt{\frac{n}{k}}\left\{  B_{n}^{\ast}\left(
\frac{k}{n}\left(  \ell_{n}\left(  s\right)  \right)  ^{-1/\gamma}\right)
-B_{n}^{\ast}\left(  \frac{k}{n}s\right)  \right\}  ds,\medskip
\]%
\[
\mathbf{R}_{n13}\left(  x\right)  :=\frac{1}{p}\int_{\frac{n}{k}\overline
{H}_{n}\left(  Z_{n-k:n}x\right)  }^{\frac{n}{k}\overline{H}_{n}\left(
Z_{n:n}x\right)  }s^{-1}\sqrt{\frac{n}{k}}\left\{  B_{n}^{\ast}\left(
\frac{k}{n}\left(  \ell_{n}\left(  s\right)  \right)  ^{-1/\gamma}\right)
-B_{n}^{\ast}\left(  \frac{k}{n}s\right)  \right\}  ds,\medskip
\]
and%
\[
\mathbf{R}_{n14}\left(  x\right)  :=\frac{1}{p}\int_{\frac{n}{k}\overline
{H}_{n}\left(  Z_{n-k:n}x\right)  }^{\frac{n}{k}\overline{H}_{n}\left(
Z_{n:n}x\right)  }\left(  \left(  \ell_{n}\left(  s\right)  \right)
^{1/\gamma}-s^{-1}\right)  \sqrt{\frac{n}{k}}B_{n}^{\ast}\left(  \frac{k}%
{n}s\right)  ds.\medskip
\]
Let us also decompose $\mathbb{T}_{n1}^{\left(  2\right)  }\left(  x\right)  $
into the sum of
\[
\mathbb{T}_{n1}^{\left(  3\right)  }\left(  x\right)  :=\frac{1}{p}\int
_{0}^{x^{-1/\gamma}}s^{-1}\sqrt{\frac{n}{k}}B_{n}^{\ast}\left(  \frac{k}%
{n}s\right)  ds,\medskip
\]%
\[
\mathbf{R}_{n15}\left(  x\right)  :=-\frac{1}{p}\int_{x^{-1/\gamma}}^{\frac
{n}{k}\overline{H}_{n}\left(  Z_{n-k:n}x\right)  }s^{-1}\sqrt{\frac{n}{k}%
}B_{n}^{\ast}\left(  \frac{k}{n}s\right)  ds,\medskip
\]
and%
\[
\mathbf{R}_{n16}\left(  x\right)  :=-\frac{1}{p}\int_{0}^{\frac{n}{k}%
\overline{H}_{n}\left(  Z_{n:n}x\right)  }s^{-1}\sqrt{\frac{n}{k}}B_{n}^{\ast
}\left(  \frac{k}{n}s\right)  ds.\medskip
\]
Finally, we end up
\[
\mathbb{T}_{n1}\left(  x\right)  =-\dfrac{1}{p}\int_{0}^{x^{-1/\gamma}}%
s^{-1}\sqrt{\dfrac{n}{k}}B_{n}^{\ast}\left(  \dfrac{k}{n}s\right)
ds+o_{\mathbb{P}}\left(  x^{-\tau/\gamma}\right)  ,
\]
which meets with approximation $\left(  \ref{approxTn1}\right)  .$\ Recall
that in Theorem \ref{Theorem1}$,$ we set%
\begin{align*}
p^{-1}\sqrt{\dfrac{n}{k}}\mathcal{L}_{n}\left(  x^{-1/\gamma}\right)   &
=p^{-1}\sqrt{\frac{n}{k}}\left\{  B_{n}^{\ast}\left(  \frac{k}{n}x^{-1/\gamma
}\right)  -\dfrac{1}{p}x^{-1/\gamma}B_{n}^{\ast}\left(  \frac{k}{n}\right)
\right\} \\
&  -p^{-1}\int_{0}^{x^{-1/\gamma}}s^{-1}\sqrt{\frac{n}{k}}B_{n}^{\ast}\left(
\frac{k}{n}s\right)  ds+p^{-2}\int_{0}^{x^{-1/\gamma}}s^{-1}\sqrt{\frac{n}{k}%
}B_{n}\left(  \frac{k}{n}s\right)  ds,
\end{align*}
this means that $p^{-1}\sqrt{\dfrac{n}{k}}\mathcal{L}_{n}\left(  x^{-1/\gamma
}\right)  -\sum_{i=1}^{3}\mathbb{T}_{ni}\left(  x\right)  =o_{\mathbb{P}%
}\left(  x^{-\tau/\gamma}\right)  ,$ therefore
\[
\sqrt{k}\left(  \Delta_{n}\left(  x\right)  -r_{n}\left(  x\right)  \right)
-p^{-1}\sqrt{\dfrac{n}{k}}\mathcal{L}_{n}\left(  x^{-1/\gamma}\right)
=o_{\mathbb{P}}\left(  x^{\tau/\gamma}\right)  ,
\]
uniformly over $x\geq1,$ this completes the proof of weak approximation
$\left(  \ref{TP-C1}\right)  .$

\subsection{Gaussian approximation to $D_{n}\left(  x\right)  $}

Recall that $D_{n}\left(  x\right)  =\sqrt{k}\left(  \Delta_{n}\left(
x\right)  -px^{-1/\gamma}\right)  ,$ $x\geq1,$ which may be rewritten into the
sum of
\[
I_{n}\left(  x\right)  :=\sqrt{k}\left(  \Delta_{n}\left(  x\right)
-r_{n}\left(  x\right)  \right)  \text{ and }J_{n}\left(  x\right)  :=\sqrt
{k}\left(  r_{n}\left(  x\right)  -px^{-1/\gamma}\right)  .
\]
In view of weak approximation $\left(  \ref{TP-C1}\right)  ,$ it suffices to
show that $J_{n}\left(  x\right)  $ meets the remainder term $\mathcal{R}%
_{n}\left(  x\right)  .$ Indeed, by letting $\psi\left(  z\right)
:=\overline{H}\left(  z\right)  /\overline{H}^{\left(  1\right)  }\left(
z\right)  ,$ we write
\[
k^{-1/2}J_{n}\left(  x\right)  =-\psi\left(  h\right)  \int_{x}^{\infty}%
\frac{\psi\left(  hz\right)  }{\psi\left(  h\right)  }\frac{d\overline
{H}\left(  hz\right)  }{\overline{H}\left(  h\right)  }-px^{-1/\gamma},
\]
which in turn may be decomposed into the sum of%
\[
k^{-1/2}J_{1n}\left(  x\right)  :=p^{-1}\left(  \frac{\overline{H}\left(
hx\right)  }{\overline{H}\left(  h\right)  }-x^{-1/\gamma}\right)  ,
\]%
\[
k^{-1/2}J_{2n}\left(  x\right)  :=-\psi\left(  z\right)  \int_{x}^{\infty
}\left(  \frac{\psi\left(  hz\right)  }{\psi\left(  h\right)  }-1\right)
d\frac{\overline{H}\left(  hz\right)  }{\overline{H}\left(  h\right)  }%
\]
and
\[
k^{-1/2}J_{3n}\left(  x\right)  :=-\psi\left(  z\right)  A_{2}\left(
h\right)  \dfrac{\overline{H}\left(  hx\right)  }{\overline{H}\left(
h\right)  },
\]
where $A_{2}:=\overline{H}^{\left(  1\right)  }/\overline{H}-p.$ By using the
uniform inequalities (for the second-order regularly varying functions) to
$\overline{H}$ \citep[see, e.g., ][bottom of page 161]{deHF06} we write: for
any $0<\epsilon<1,$ there exists $n_{0}=n_{0}\left(  \epsilon\right)  ,$ such
that for all $n>n_{0}$ and $x\geq1$%
\[
\left\vert \frac{\overline{H}\left(  hx\right)  /\overline{H}\left(  h\right)
-x^{-1/\gamma}}{A\left(  h\right)  }-x^{-1/\gamma}\frac{x^{\nu/\gamma}-1}%
{\nu\gamma}\right\vert <\epsilon x^{-1/\gamma+\nu/\gamma+\epsilon}.
\]
Recall that, by assumption, we have $\sqrt{k}A\left(  h\right)  =O\left(
1\right)  ,$ it follows that
\[
J_{1n}\left(  x\right)  =p^{-1}x^{-1/\gamma}\dfrac{x^{\nu/\gamma}-1}{\nu
\gamma}\sqrt{k}A\left(  h\right)  +o\left(  x^{-\tau/\gamma}\right)  .
\]
For the second and third terms, let us write
\[
J_{2n}\left(  x\right)  =\sqrt{k}A_{\ast}\left(  h\right)  \psi\left(
h\right)  \int_{x}^{\infty}\dfrac{1-\psi\left(  hz\right)  /\psi\left(
h\right)  }{A_{\ast}\left(  h\right)  }d\dfrac{\overline{H}\left(  hz\right)
}{\overline{H}\left(  h\right)  }.
\]
Making use of Lemma \ref{Lemma-2} and Proposition \ref{Potter} (applied to
$\overline{H})$ with the fact that $\psi\left(  h\right)  \rightarrow p^{-1}$
and $\sqrt{k}A_{\ast}\left(  h\right)  =O\left(  1\right)  ,$ we end up, after
integration, with%
\[
J_{2n}\left(  x\right)  =\frac{\nu+x^{\nu_{\ast}/\gamma}-1}{p\gamma
x^{1/\gamma}\nu_{\ast}\left(  1-\nu_{\ast}\right)  }\sqrt{k}A_{\ast}\left(
h\right)  +o\left(  x^{-\tau/\gamma}\right)  .
\]
For the third term, we also assumed that$\sqrt{k}A_{2}\left(  h\right)
=O\left(  1\right)  ,$ then it suffices to use on again Proposition
\ref{Potter} to readily get $J_{3n}\left(  x\right)  :=x^{1/\gamma-1}\sqrt
{k}A_{2}\left(  h\right)  +o\left(  x^{-\tau/\gamma}\right)  ,$ as sought.

\subsection{Proof of Theorem \ref{Theorem2}}

Let us begin by the consistency of $\widehat{\gamma}_{1}$ which may be
rewritten into $\int_{1}^{\infty}x^{-1}\Delta_{n}\left(  x\right)  dx$\ and
write
\[
\widehat{\gamma}_{1}=\int_{1}^{\infty}x^{-1}r_{n}\left(  x\right)
dx+k^{-1/2}p^{-1}\sqrt{\dfrac{n}{k}}\int_{1}^{\infty}x^{-1}\mathcal{L}%
_{n}\left(  x^{-1/\gamma}\right)  +\epsilon_{n},
\]
where
\[
\epsilon_{n}:=\int_{1}^{\infty}x^{-1}\left\{  \Delta_{n}\left(  x\right)
-r_{n}\left(  x\right)  -k^{-1/2}p^{-1}\sqrt{\dfrac{n}{k}}\mathcal{L}%
_{n}\left(  x^{-1/\gamma}\right)  \right\}  dx.
\]
It is clear that for a fixed $0\leq\tau<1/8,$ we have
\[
\left\vert \epsilon_{n}\right\vert \leq\sup_{x\geq1}x^{\tau/\gamma}\left\vert
\Delta_{n}\left(  x\right)  -r_{n}\left(  x\right)  -k^{-1/2}p^{-1}%
\sqrt{\dfrac{n}{k}}\mathcal{L}_{n}\left(  x^{-1/\gamma}\right)  \right\vert
\int_{1}^{\infty}x^{-1-\tau/\gamma}dx.
\]
Since $\int_{1}^{\infty}x^{-1-\tau/\gamma}dx=\gamma/\tau$ (finite), then by
using weak approximation $\left(  \ref{TP-C1}\right)  $ given in Theorem
\ref{Theorem1}, we end up with $\epsilon_{n}\overset{\mathbb{P}}{\rightarrow
}0.$ On the other hand $p^{-1}\sqrt{\dfrac{n}{k}}\int_{1}^{\infty}%
x^{-1}\mathcal{L}_{n}\left(  x^{-1/\gamma}\right)  $ is a centred Gaussian
random with variance
\[
\sigma_{n}^{2}:=\mathbf{E}\left[  p^{-1}\sqrt{\dfrac{n}{k}}\int_{1}^{\infty
}x^{-1}\mathcal{L}_{n}\left(  x^{-1/\gamma}\right)  \right]  ^{2}.
\]
By a tedious (but elementary) computation we show that $\sigma_{n}%
^{2}\rightarrow\left(  9p^{-1}-8\right)  \gamma_{1}^{2}.$ A similar
calculation may be found in the proof of Corollary 2.1 of \cite{BMN-2015}. It
remains to show that $\int_{1}^{\infty}x^{-1}r_{n}\left(  x\right)
dx\rightarrow\gamma_{1}$ as $n\rightarrow\infty.$ Indeed, we have
\[
\int_{1}^{\infty}x^{-1}r_{n}\left(  x\right)  dx=\frac{1}{\overline{H}\left(
h\right)  }\int_{1}^{\infty}\left\{  \int_{x}^{\infty}\frac{\overline
{H}\left(  hw\right)  dH\left(  hw\right)  }{\overline{H}^{\left(  1\right)
}\left(  hw\right)  +k^{-1}\overline{H}\left(  hw\right)  }\right\}  d\log x.
\]
By an integration by parts, this latter becomes%
\[
\frac{1}{\overline{H}\left(  h\right)  }\int_{1}^{\infty}\frac{\overline
{H}\left(  hx\right)  dH\left(  hx\right)  }{\overline{H}^{\left(  1\right)
}\left(  hx\right)  +k^{-1}\overline{H}\left(  hx\right)  }\left(  \log
x\right)  dx,
\]
which, from Lemma \ref{Lemma-1}, tends to $\gamma_{1}$ as $n\rightarrow
\infty,$ as sought. Let us now consider the asymptotic normality. Recall that
$\sqrt{k}\left(  \widehat{\gamma}_{1}-\gamma_{1}\right)  =\int_{1}^{\infty
}x^{-1}D_{n}\left(  x\right)  dx$ and use the weak approximation $\left(
\ref{TP-C2}\right)  $ to get
\[
\sqrt{k}\left(  \widehat{\gamma}_{1}-\gamma_{1}\right)  =p^{-1}\sqrt{\dfrac
{n}{k}}\int_{1}^{\infty}x^{-1}\mathcal{L}_{n}\left(  x^{-1/\gamma}\right)
+p^{-1}\mu_{n}+o_{\mathbb{P}}\left(  1\right)  ,
\]
where $\mu_{n}:=\int_{1}^{\infty}x^{-1}\mathcal{R}_{n}\left(  x\right)  dx.$
By using integrations by parts with elementary calculations, we show that
$\mu_{n}$ meets formula $\left(  \ref{mu-n}\right)  $ and%
\begin{align*}
\sqrt{\dfrac{n}{k}}\int_{1}^{\infty}x^{-1}\mathcal{L}_{n}\left(  x^{-1/\gamma
}\right)   &  =\sqrt{\dfrac{n}{k}}\gamma_{1}%
{\displaystyle\int_{0}^{1}}
s^{-1}B_{n}^{\ast}\left(  \dfrac{k}{n}s\right)  ds-p^{-1}\sqrt{\dfrac{n}{k}%
}B_{n}^{\ast}\left(  \dfrac{k}{n}\right) \\
&  +\sqrt{\dfrac{n}{k}}\gamma_{1}%
{\displaystyle\int_{0}^{1}}
s^{-1}\left\{  B_{n}^{\ast}\left(  \dfrac{k}{n}s\right)  -p^{-1}B_{n}\left(
p\dfrac{k}{n}s\right)  \right\}  \log\left(  s\right)  ds,
\end{align*}
with variance tending to $\left(  9p^{-1}-8\right)  \gamma_{1}^{2},$ which
leads to the Gaussian approximation and therefore the asymptotic normality of
$\sqrt{k}\left(  \widehat{\gamma}_{1}-\gamma_{1}\right)  .$

\section{\textbf{Concluding notes}}

\noindent In this work, we first defined a tail empirical process for
Pareto-like distributions which are randomly right-censored and established
its Gaussian approximation. The latter will play a central role, in the
context of right censorship, in determining the asymptotic distributions of
statistics that are functionals of the tail index estimator such as the
estimators of large quantiles, risk measures, second-order parameters of
regular variation... Then, we introduced a new Hill-type estimator for
positive EVI of right-censored heavy-tailed data whose asymptotic behavior
(consistency and asymptotic normality) is assessed by using the
above-mentioned tail process. Compared to other existing estimators, the new
tail index estimator performs better, as far as bias and mean squared error
are concerned, at least from a simulation viewpoint. As a case study, we
provided an estimator to the survival time of Australian male Aids patients.
It is also worth mentioning, that assumption $\overline{H}\in\mathcal{RV}%
_{\left(  -1/\gamma\right)  }$ also implies that
\[
\Gamma_{t}\left(  g,\alpha\right)  :=\frac{\int_{t}^{\infty}g\left(
\dfrac{\overline{H}\left(  x\right)  }{\overline{H}\left(  t-\right)
}\right)  \left(  \log\left(  x/t\right)  \right)  ^{\alpha}\dfrac{dH\left(
x\right)  }{\overline{H}\left(  t\right)  }}{\int_{0}^{1}g\left(  x\right)
\left(  -\log x\right)  ^{\alpha}dx}\rightarrow\gamma^{\alpha},\text{ as
}t\rightarrow\infty,
\]
where $g$ is a suitable weight function and $\alpha$ is some positive real
number. Thus, assertion $\left(  \ref{hill-theo}\right)  $ becomes a special
case of the limit above. Thereby, the functional $\Gamma_{t}\left(
g,\alpha\right)  $ can be considered as a basic tool to constructing a whole
class of estimators for distribution tail parameters for complete data, see
for instance \cite{Mercadier-10}. Recently, \cite{BchMN-16a} and \cite{HNB-17}
benefited from this result to derive, respectively, a kernel estimator of tail
index and a second-order parameter estimator for random right-truncation data.
As we deal with the tail modeling of underlying cdf $F,$ we have to work with
the limit
\[
\Gamma_{t_{n}}^{\left(  c\right)  }\left(  g,\alpha\right)  :=\frac{\int
_{t}^{\infty}g\left(  \dfrac{\overline{H}\left(  x\right)  }{\overline
{H}\left(  t_{n}-\right)  }\right)  \left(  \dfrac{\log\left(  x/t_{n}\right)
}{\overline{H}^{\left(  1\right)  }\left(  z\right)  /\overline{H}\left(
z\right)  +k^{-1}}\right)  ^{\alpha}\dfrac{dH\left(  x\right)  }{\overline
{H}\left(  t_{n}\right)  }}{\int_{0}^{1}g\left(  x\right)  \left(  -\log
x\right)  ^{\alpha}dx}\rightarrow\gamma_{1}^{\alpha},\text{ as }%
n\rightarrow\infty,
\]
which is generalization of the result $\left(  \ref{lim-princ}\right)  .$ This
latter may be readily shown by using similar arguments as those used in Lemma
\ref{Lemma-2}. Thereby, by letting $t=Z_{n-k:n},$ then by replacing
$\overline{H}^{\left(  1\right)  }$ and $\overline{H}$ by their respective
empirical counterparts $\overline{H}_{n}^{\left(  1\right)  }$ and
$\overline{H}_{n},$ we end up with an estimator of $\Gamma_{t}^{\left(
c\right)  }\left(  g,\alpha\right)  $ given by
\[
\Gamma_{n,k}^{\left(  c\right)  }\left(  g,\alpha\right)  :=\frac
{\sum\limits_{i=1}^{k-1}a_{n,k}^{\left(  i\right)  }\left(  \log\left(
Z_{n-i:n}/Z_{n-k:n}\right)  \right)  ^{\alpha}}{\int_{0}^{1}g\left(  x\right)
\left(  -\log x\right)  ^{\alpha}dx},
\]
where $a_{n,k}^{\left(  i\right)  }:=\dfrac{i}{k}g\left(  \dfrac{i}%
{k+1}\right)  /\left(  \sum_{j=1}^{i}\delta_{\left[  n-j+1:n\right]
}+i/k\right)  .$ This would have fruitful consequences on the statistical
analysis of extremes under random censoring. To finish, notice that the
Gaussian approximations corresponding to $\widehat{p},$ $\widehat{\gamma}_{1}$
and $D_{n}\left(  x\right)  $ are jointly established in terms of the same
sequence of Brownian bridges $B_{n}.$ This allows to establish the limit
distributions to the statistics of Kolmogorov-Smirnov and Cramer-von Mises
type respectively defined by%
\[
KS_{n,k}:=\sqrt{k}\sup_{x\geq1}\left\vert \Delta_{n}\left(  x\right)
-\widehat{p}^{-1}x^{-1/\widehat{\gamma}}\right\vert ,
\]
and%
\[
CV_{n,k}:=\left(  \widehat{p}\widehat{\gamma}\right)  ^{-1}k\int_{1}^{\infty
}x^{-1/\widehat{\gamma}-1}\left(  \Delta_{n}\left(  x\right)  -\widehat
{p}^{-1}x^{-1/\widehat{\gamma}}\right)  ^{2}dx.
\]
These statistics provide goodness-of-fit tests for Pareto-like distributions
under right-censorship. This matter will be addressed in our future work. In
the case of complete data $\left(  p=1\right)  ,$ the latter two statistics
respectively become%
\[
\sqrt{k}\sup_{x\geq1}\left\vert \dfrac{n}{k}\overline{F}_{n}\left(
xX_{n-k:n}\right)  -x^{-1/\widehat{\gamma}}\right\vert \text{ and }\left(
\widehat{\gamma}\right)  ^{-1}k\int_{1}^{\infty}x^{-1/\widehat{\gamma}%
-1}\left(  \dfrac{n}{k}\overline{F}_{n}\left(  xX_{n-k:n}\right)
-x^{-1/\widehat{\gamma}}\right)  ^{2}dx.
\]
They are used in \cite{KP-2008}, amongst others, to test the heaviness of cdf's.

\section{\textbf{Appendix}}

\noindent A key result related to the regular variation concept, namely
Potter-type inequalities
\citep[see, e.g., Proposition B.1.10, page 369  in][]{deHF06}, will be applied
quite frequently. For this reason, we need to recall this very useful tool here.

\begin{proposition}
\label{Potter}Let $f$ $\in\mathcal{RV}_{\left(  \alpha\right)  },$ for
$\alpha\in\mathbb{R}.$ Then, for any sufficiently small $\epsilon>0,$ there
exists $t_{0}=t_{0}\left(  \epsilon\right)  >0,$ such that for any $t\geq
t_{0},$ we have $\left\vert f\left(  tx\right)  /f\left(  t\right)
-x^{\alpha}\right\vert \leq\epsilon x^{\alpha+\epsilon},$ uniformly on
$x\geq1.$
\end{proposition}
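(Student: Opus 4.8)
The plan is to derive the inequality directly from the Karamata representation theorem for regularly varying functions, which is the standard route used in \cite{deHF06} to prove this very statement (Proposition~B.1.10 there). Since $f\in\mathcal{RV}_{\left(  \alpha\right)  },$ it is eventually positive and, beyond some point $t_{1},$ admits a representation $f\left(  t\right)  =c\left(  t\right)  \exp\left(  \int_{t_{1}}^{t}\varepsilon\left(  u\right)  u^{-1}du\right)  ,$ where $c\left(  t\right)  \rightarrow c\in\left(  0,\infty\right)  $ and $\varepsilon\left(  u\right)  \rightarrow\alpha$ as $u\rightarrow\infty.$ The first step is to fix an auxiliary tolerance $\eta=\eta\left(  \epsilon\right)  >0,$ to be pinned down at the end, and to choose $t_{0}=t_{0}\left(  \epsilon\right)  $ large enough that $\left\vert \varepsilon\left(  u\right)  -\alpha\right\vert <\eta$ and $\left\vert c\left(  u\right)  /c-1\right\vert <\eta$ simultaneously for all $u\geq t_{0}.$

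For $t\geq t_{0}$ and $x\geq1$ --- so that $tu\geq t\geq t_{0}$ for every $u\in\left[  1,x\right]  $ --- the change of variable $u\mapsto tu$ in the representation gives $f\left(  tx\right)  /f\left(  t\right)  =\left(  c\left(  tx\right)  /c\left(  t\right)  \right)  \exp\left(  \int_{1}^{x}\varepsilon\left(  tu\right)  u^{-1}du\right)  .$ The $c$-ratio then lies in $\left[  \left(  1-\eta\right)  /\left(  1+\eta\right)  ,\left(  1+\eta\right)  /\left(  1-\eta\right)  \right]  ,$ and since $\int_{1}^{x}u^{-1}du=\log x\geq0$ the exponent lies between $\left(  \alpha-\eta\right)  \log x$ and $\left(  \alpha+\eta\right)  \log x.$ Exponentiating and multiplying yields the two-sided Potter-type envelope $\tfrac{1-\eta}{1+\eta}x^{\alpha-\eta}\leq f\left(  tx\right)  /f\left(  t\right)  \leq\tfrac{1+\eta}{1-\eta}x^{\alpha+\eta},$ valid for all $t\geq t_{0}$ and $x\geq1.$

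It remains to show that this envelope sits inside the band between $x^{\alpha}-\epsilon x^{\alpha+\epsilon}$ and $x^{\alpha}+\epsilon x^{\alpha+\epsilon}.$ Dividing through by $x^{\alpha}>0$ and writing $a:=\left(  1+\eta\right)  /\left(  1-\eta\right)  \geq1,$ the largest possible upward deviation of $f\left(  tx\right)  /f\left(  t\right)  $ from $x^{\alpha}$ is $x^{\alpha}\left(  ax^{\eta}-1\right)  $ and the largest downward one is $x^{\alpha}\left(  1-x^{-\eta}/a\right)  ;$ an elementary comparison (put $y:=ax^{\eta}\geq1$ and note $y-1\geq1-1/y$) shows the upward deviation dominates for $x\geq1.$ Hence everything reduces to the scalar inequality $ax^{\eta}-1\leq\epsilon x^{\epsilon}$ for all $x\geq1.$ Evaluating at $x=1$ forces $2\eta/\left(  1-\eta\right)  \leq\epsilon,$ so I require $\eta\leq\epsilon/4$ and $\eta\leq1/2,$ and then $\epsilon>\eta,$ which makes $g\left(  x\right)  :=\epsilon x^{\epsilon}-ax^{\eta}+1$ satisfy $g'\left(  x\right)  =x^{\eta-1}\left(  \epsilon^{2}x^{\epsilon-\eta}-a\eta\right)  \geq x^{\eta-1}\left(  \epsilon^{2}-a\eta\right)  \geq0$ on $\left[  1,\infty\right)  $ as soon as $a\eta\leq\epsilon^{2}.$ Thus $g$ is nondecreasing on $\left[  1,\infty\right)  $ and $g\left(  x\right)  \geq g\left(  1\right)  \geq0.$ Taking $\eta=\eta\left(  \epsilon\right)  :=\min\left\{  \epsilon/4,\epsilon^{2}/4,1/2\right\}  $ --- which makes all three smallness requirements hold since $a\leq3$ --- together with the corresponding $t_{0},$ finishes the argument.

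The only delicate point, modest as it is, is the passage from the multiplicative envelope to the additive estimate $\left\vert f\left(  tx\right)  /f\left(  t\right)  -x^{\alpha}\right\vert \leq\epsilon x^{\alpha+\epsilon}:$ an $\eta$ merely proportional to $\epsilon$ fails near $x=1,$ so $\eta$ must be tied to both $\epsilon$ and $\epsilon^{2}$ and one must use the monotonicity of $g$ on $\left[  1,\infty\right)  $ rather than crude term-by-term bounds. Everything else is the routine representation-then-envelope computation; an alternative would be to combine the uniform convergence theorem on a fixed interval $\left[  1,M\right]  $ with the envelope bound on $\left[  M,\infty\right)  ,$ but the above handles all $x\geq1$ in one stroke.
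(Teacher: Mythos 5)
Your proof is correct, and it follows the standard route: the paper itself states this proposition without proof, simply citing Proposition B.1.10 of de Haan and Ferreira (2006), and your Karamata-representation argument is exactly the derivation underlying that reference. The extra care you take in passing from the multiplicative Potter envelope $\tfrac{1-\eta}{1+\eta}x^{\alpha-\eta}\leq f(tx)/f(t)\leq\tfrac{1+\eta}{1-\eta}x^{\alpha+\eta}$ to the additive bound $\left\vert f(tx)/f(t)-x^{\alpha}\right\vert\leq\epsilon x^{\alpha+\epsilon}$ (tying $\eta$ to both $\epsilon$ and $\epsilon^{2}$ and using the monotonicity of $g$) is exactly the point that needs attention, and you handle it correctly.
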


\begin{lemma}
\label{Lemma-1}Let $\overline{F}\in\mathcal{RV}_{\left(  -1/\gamma_{1}\right)
}$ and $\overline{G}\in\mathcal{RV}_{\left(  -1/\gamma_{2}\right)  }.$ Then,
for every real $r\geq0,$ we have%
\begin{equation}
\lim_{t\rightarrow\infty}\frac{1}{\overline{H}\left(  t\right)  }\int
_{t}^{\infty}\left(  \frac{\overline{H}\left(  z\right)  }{\overline
{H}^{\left(  1\right)  }\left(  z\right)  }\right)  ^{r}\log\left(
z/t\right)  dH\left(  z\right)  =\frac{\gamma}{p^{r}}. \label{as-lemma1}%
\end{equation}
Moreover, for a given sequence $t=t_{n}\rightarrow\infty,$ as $n\rightarrow
\infty,$ we have
\begin{equation}
\lim_{n\rightarrow\infty}\frac{1}{\overline{H}\left(  t\right)  }\int
_{t}^{\infty}\frac{\log\left(  z/t\right)  dH\left(  z\right)  }{\overline
{H}^{\left(  1\right)  }\left(  z\right)  /\overline{H}\left(  z\right)
+k^{-1}}=\gamma_{1}. \label{as-lemma2}%
\end{equation}

\end{lemma}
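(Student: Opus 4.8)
The plan is to derive both limits from the first-order regular variation of $\overline{H}$ together with the known fact (Lemma 4.1 of \cite{BMN-2015}) that $\overline{H}^{\left(1\right)}\left(z\right)/\overline{H}\left(z\right)\to p$ as $z\to\infty$. For the first assertion $\left(\ref{as-lemma1}\right)$, I would fix $r\geq0$ and write the ratio $\left(\overline{H}\left(z\right)/\overline{H}^{\left(1\right)}\left(z\right)\right)^{r}=p^{-r}\left(1+o\left(1\right)\right)$ uniformly for $z\geq t$ once $t$ is large, using Potter-type bounds (Proposition \ref{Potter}) to control the error: for any small $\epsilon>0$ and $t$ large, $\left|\left(\overline{H}\left(z\right)/\overline{H}^{\left(1\right)}\left(z\right)\right)^{r}-p^{-r}\right|\leq\epsilon$ for all $z\geq t$. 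Pulling the constant $p^{-r}$ out of the integral reduces the problem to showing
\[
\frac{1}{\overline{H}\left(t\right)}\int_{t}^{\infty}\log\left(z/t\right)dH\left(z\right)\to\gamma,
\]
which is exactly $\left(\ref{hill-theo}\right)$, already established in the introduction by integration by parts from Theorem 1.2.2 of \cite{deHF06}. The contribution of the $\epsilon$-error term is bounded by $\epsilon$ times the same (convergent) integral, hence negligible after letting $\epsilon\downarrow0$. This gives the limit $\gamma/p^{r}$.

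For the second assertion $\left(\ref{as-lemma2}\right)$, the extra difficulty is the additive regularization $k^{-1}$ in the denominator. I would split the integration range at a threshold: for $z$ in a region where $z/t$ is bounded (say $t\leq z\leq t M$ for fixed large $M$), the ratio $\overline{H}^{\left(1\right)}\left(z\right)/\overline{H}\left(z\right)$ stays close to $p$, so the denominator $\overline{H}^{\left(1\right)}\left(z\right)/\overline{H}\left(z\right)+k^{-1}$ converges to $p$ and the integrand behaves like $p^{-1}\log\left(z/t\right)dH\left(z\right)/\overline{H}\left(t\right)$; by the first assertion (case $r=1$, or more precisely by $\left(\ref{hill-theo}\right)$ divided by $p$) this part contributes $\gamma/p=\gamma_{1}$ in the limit as first $n\to\infty$ then $M\to\infty$. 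For the tail region $z>tM$, I would bound $\overline{H}^{\left(1\right)}\left(z\right)/\overline{H}\left(z\right)+k^{-1}$ from below by a positive constant (for instance $p/2$ for large $n$, since the ratio is close to $p$ and $k^{-1}>0$), so that the tail contribution is at most a constant times $\overline{H}\left(t\right)^{-1}\int_{tM}^{\infty}\log\left(z/t\right)dH\left(z\right)$, which by regular variation of $\overline{H}$ tends to $0$ as $M\to\infty$ uniformly in $n$. Combining the two pieces yields $\gamma_{1}$.

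The main obstacle is the interplay between the two limiting operations in $\left(\ref{as-lemma2}\right)$: the sequence $k^{-1}$ depends on $n$ and vanishes, while $t=t_{n}$ also diverges, so one must be careful that the regularization term does not distort the near-diagonal contribution and that the tail is uniformly small. The clean way around this is precisely the truncation-at-$M$ argument above, with a dominated-convergence style estimate for the tail; since $k\to\infty$, the quantity $k^{-1}$ is eventually smaller than any fixed positive number, so it never competes with the limit value $p$ of the ratio in the bulk of the integral. Once the uniform tail bound is in place, the remaining computations are routine applications of Karamata's theorem and Potter's inequalities.
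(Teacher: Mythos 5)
Your argument is correct. For the first assertion you follow essentially the paper's route: replace $\left(\overline{H}\left(z\right)/\overline{H}^{\left(1\right)}\left(z\right)\right)^{r}$ by $p^{-r}$ up to a uniformly small error and invoke $\left(\ref{hill-theo}\right)$. One small remark: the uniform bound $\left\vert \left(\overline{H}\left(z\right)/\overline{H}^{\left(1\right)}\left(z\right)\right)^{r}-p^{-r}\right\vert \leq\epsilon$ for all $z\geq t$ does not come from Proposition \ref{Potter} (which, applied to the $\mathcal{RV}_{\left(0\right)}$ function $\overline{H}/\overline{H}^{\left(1\right)}$, only yields an $\epsilon z^{\epsilon}$ bound); it follows directly from the pointwise limit $\overline{H}^{\left(1\right)}\left(z\right)/\overline{H}\left(z\right)\rightarrow p$ of Lemma 4.1 in \cite{BMN-2015}. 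With that correction your error estimate is actually cleaner than the paper's, which works with the $o\left(z^{\epsilon}\right)$ bound and must then verify separately (by integration by parts and Potter bounds on $\overline{H}$) that $\int_{1}^{\infty}z^{\epsilon}\log\left(z\right)dH\left(tz\right)/\overline{H}\left(t\right)$ stays finite.

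For the second assertion your route genuinely differs from the paper's. The paper expands $\left(u+k^{-1}\right)^{-1}=u^{-1}+o\left(u^{-2}\right)$ with $u=\overline{H}^{\left(1\right)}\left(z\right)/\overline{H}\left(z\right)$ and then applies $\left(\ref{as-lemma1}\right)$ twice, with $r=1$ and $r=2$; this is why the lemma is stated for general $r$. You instead sandwich the denominator between $p-\epsilon$ and $p+\epsilon+k^{-1}$ (your truncation at $tM$ is not even necessary for this, since the ratio is uniformly close to $p$ on all of $\left[t,\infty\right)$ for large $t$, and the tail $\overline{H}\left(t\right)^{-1}\int_{tM}^{\infty}\log\left(z/t\right)dH\left(z\right)$ is small by Karamata), and conclude by $\left(\ref{hill-theo}\right)$ alone. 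Your version is more elementary in that it needs only the $r=1$ case and avoids the (slightly informally justified) uniform Taylor expansion; the paper's version has the advantage of packaging the regularization error as an explicit $o\left(u^{-2}\right)$ term controlled by the $r=2$ case, a bookkeeping that is reused in the subsequent decompositions of the main proof. Both arguments are sound.
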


\begin{proof}
For $r\geq0,$ let us set%
\[
I_{r}\left(  t\right)  :=\frac{1}{\overline{H}\left(  t\right)  }\int
_{t}^{\infty}\left(  \frac{\overline{H}\left(  z\right)  }{\overline
{H}^{\left(  1\right)  }\left(  z\right)  }\right)  ^{r}\log\left(
z/t\right)  dH\left(  z\right)  ,
\]
which, by a change of variables, becomes
\[
\frac{1}{\overline{H}\left(  t\right)  }\int_{1}^{\infty}\left(
\frac{\overline{H}\left(  tz\right)  }{\overline{H}^{\left(  1\right)
}\left(  tz\right)  }\right)  ^{r}\log\left(  z\right)  dH\left(  tz\right)
.
\]
This may be decomposed into the sum of
\[
I_{1}\left(  t\right)  :=\frac{1}{\overline{H}\left(  t\right)  }\int
_{1}^{\infty}\left\{  \left(  \frac{\overline{H}\left(  tz\right)  }%
{\overline{H}^{\left(  1\right)  }\left(  tz\right)  }\right)  ^{r}%
-p^{-r}\right\}  \log\left(  z\right)  dH\left(  tz\right)  ,
\]
and $\ I_{2}\left(  t\right)  :=p^{-r}\int_{1}^{\infty}\log\left(  z\right)
dH\left(  tz\right)  /\overline{H}\left(  t\right)  .$ We will show that
$I_{1}\left(  t\right)  \rightarrow0$ and $I_{2}\left(  t\right)
\rightarrow\gamma/p^{r},$ as $t\rightarrow\infty.$ Indeed, we have%
\[
\left\vert I_{1}\left(  t\right)  \right\vert \leq\frac{1}{\overline{H}\left(
t\right)  }\int_{1}^{\infty}\left\vert \left(  \frac{\overline{H}\left(
tz\right)  }{\overline{H}^{\left(  1\right)  }\left(  tz\right)  }\right)
^{r}-\frac{1}{p^{r}}\right\vert \log\left(  z\right)  dH\left(  tz\right)  .
\]
Recall that from Lemma 4.1 of \cite{BMN-2015}, under the first-order
conditions $\left(  \ref{rv}\right)  ,$ we have $\overline{H}\left(  z\right)
/\overline{H}^{\left(  1\right)  }\left(  z\right)  \rightarrow p^{-1},$ as
$z\rightarrow\infty,$ it follows that $\overline{H}/\overline{H}^{\left(
1\right)  }\in\mathcal{RV}_{\left(  0\right)  }.$ Thus, by applying
Proposition $\left(  \ref{Potter}\right)  $ to $\overline{H}/\overline
{H}^{\left(  1\right)  },$\ we write : for all large $t$ and any sufficiently
small $\epsilon>0$
\begin{equation}
\text{ }\left\vert \frac{\overline{H}\left(  tz\right)  /\overline{H}^{\left(
1\right)  }\left(  tz\right)  }{\overline{H}\left(  t\right)  /\overline
{H}^{\left(  1\right)  }\left(  t\right)  }-1\right\vert <\epsilon,\text{ for
any }z\geq1, \label{pot}%
\end{equation}
leading to%
\[
\left\vert \frac{\overline{H}\left(  tz\right)  }{\overline{H}^{\left(
1\right)  }\left(  tz\right)  }-\frac{\overline{H}\left(  t\right)
}{\overline{H}^{\left(  1\right)  }\left(  t\right)  }\right\vert
<\epsilon\left(  1+\epsilon\right)  p^{-1}z^{\epsilon}.
\]
Observe now that%
\begin{equation}
\left\vert \frac{\overline{H}\left(  tz\right)  }{\overline{H}^{\left(
1\right)  }\left(  tz\right)  }-p^{-1}\right\vert <\left\vert \frac
{\overline{H}\left(  tz\right)  }{\overline{H}^{\left(  1\right)  }\left(
tz\right)  }-\frac{\overline{H}\left(  t\right)  }{\overline{H}^{\left(
1\right)  }\left(  t\right)  }\right\vert +\left\vert \frac{\overline
{H}\left(  t\right)  }{\overline{H}^{\left(  1\right)  }\left(  t\right)
}-p^{-1}\right\vert , \label{inequa-p}%
\end{equation}
it follows that $\overline{H}\left(  tz\right)  /\overline{H}^{\left(
1\right)  }\left(  tz\right)  -p^{-1}=o\left(  z^{\epsilon}\right)  ,$
uniformly over $z\geq1.$ Using the mean-value theorem yields that $\left(
\overline{H}\left(  tz\right)  /\overline{H}^{\left(  1\right)  }\left(
tz\right)  \right)  ^{r}-p^{-r}=o\left(  z^{\epsilon}\right)  $ as well, which
implies that for all large $t,$ $I_{1}\left(  t\right)  =o\left(  1\right)
\int_{1}^{\infty}z^{\epsilon}\log\left(  z\right)  d\left\{  H\left(
tz\right)  /\overline{H}\left(  t\right)  \right\}  .$ Then, it suffices to
show that the latter integral is finite. Indeed, let $J_{M}:=\int_{1}%
^{M}z^{\epsilon}\log\left(  z\right)  d\left\{  H\left(  tz\right)
/\overline{H}\left(  t\right)  \right\}  ,$ which, by an integration by parts,
equals
\[
-M^{\epsilon}\left\{  \overline{H}\left(  tM\right)  /\overline{H}\left(
t\right)  \right\}  \log M+\int_{1}^{M}\left\{  \overline{H}\left(  tz\right)
/\overline{H}\left(  t\right)  \right\}  d\left(  z^{\epsilon}\log z\right)
.
\]
Once again, we use Proposition $\left(  \ref{Potter}\right)  $ to
$\overline{H},$ to write that for any $z\geq1,$
\begin{equation}
\left\vert \overline{H}\left(  tz\right)  /\overline{H}\left(  t\right)
-z^{-1/\gamma}\right\vert <\epsilon z^{-1/\gamma+\epsilon\text{ }},
\label{inequa-H}%
\end{equation}
for sufficiently large $t.$ It follows that
\[
M^{\epsilon}\left\{  \overline{H}\left(  tM\right)  /\overline{H}\left(
t\right)  \right\}  \log M=M^{\epsilon}\left(  1+o\left(  M^{\epsilon}\right)
\right)  M^{-1/\gamma}\log M,
\]
which tends to zero as $M\rightarrow\infty.$ By using similar arguments, we
end up with
\[
\int_{1}^{M}\left\{  \overline{H}\left(  tz\right)  /\overline{H}\left(
t\right)  \right\}  d\left(  z^{\epsilon}\log z\right)  =\int_{1}^{M}\left(
1+o\left(  z^{\epsilon}\right)  \right)  z^{-1/\gamma}d\left(  z^{\epsilon
}\log z\right)  ,
\]
which tends to $\gamma\left(  \gamma\epsilon-1\right)  /\left(  2\gamma
\epsilon-1\right)  ^{2}\left(  1+o\left(  1\right)  \right)  $ as
$M\rightarrow\infty,$ hence $I_{1}\left(  t\right)  =o\left(  1\right)  .$ For
the second term $I_{2}\left(  t\right)  ,$ it suffices to use $\left(
\ref{hill-theo}\right)  $ to have $p^{r}I_{2}\left(  t\right)  =\int
_{1}^{\infty}\log\left(  z\right)  dH\left(  tz\right)  /\overline{H}\left(
t\right)  \rightarrow\gamma$ as $t\rightarrow\infty,$ which meets $\left(
\ref{as-lemma1}\right)  .$ To prove $\left(  \ref{as-lemma2}\right)  ,$ we
first apply Taylor's expansion. Indeed, since $1/k\rightarrow0$ as
$n\rightarrow\infty,$ then%
\[
\frac{1}{\overline{H}^{\left(  1\right)  }\left(  z\right)  /\overline
{H}\left(  z\right)  +1/k}=\frac{\overline{H}\left(  z\right)  }{\overline
{H}^{\left(  1\right)  }\left(  z\right)  }+o\left(  \left(  \frac
{\overline{H}\left(  z\right)  }{\overline{H}^{\left(  1\right)  }\left(
z\right)  }\right)  ^{2}\right)  ,\text{ as }n\rightarrow\infty,
\]
thereby, we use result $\left(  \ref{as-lemma1}\right)  $ twice, for $r=1$ and
$r=2,$ which completes the proof of the lemma.
\end{proof}

\begin{lemma}
\label{Lemma-2}Let $\psi:=\overline{H}/\overline{H}^{\left(  1\right)  }.$
Then, for any small $\epsilon>0$ and $x\geq1,$ we have%
\begin{equation}
\left\vert \frac{\psi\left(  tx\right)  /\psi\left(  t\right)  -1}{A_{\ast
}\left(  t\right)  }-\dfrac{x^{\nu_{\ast}/\gamma}-1}{\nu_{\ast}\gamma
}\right\vert <\epsilon x^{\nu_{\ast}/\gamma+\epsilon},\text{ as }%
t\rightarrow\infty, \label{pshi}%
\end{equation}
where $\nu_{\ast}:=\max\left(  \nu,\nu_{1}\right)  $ and $A_{\ast}\left(
t\right)  :=\mathbf{1}\left\{  \nu>\nu_{1}\right\}  A\left(  t\right)
-\mathbf{1}\left\{  \nu\leq\nu_{1}\right\}  A_{1}\left(  t\right)  .$
\end{lemma}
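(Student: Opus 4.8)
The plan is to write $\psi=\overline{H}/\overline{H}^{(1)}$ as a quotient of two normalized tails and to import the uniform (Potter-type) inequalities for second-order regularly varying functions. Since
\[
\frac{\psi(tx)}{\psi(t)}=\frac{x^{1/\gamma}\,\overline{H}(tx)/\overline{H}(t)}{x^{1/\gamma}\,\overline{H}^{(1)}(tx)/\overline{H}^{(1)}(t)},
\]
I would apply the second-order uniform inequality recalled at the bottom of page~161 of \cite{deHF06} (the one already used in the proof of the Gaussian approximation to $D_{n}$) separately to $\overline{H}\in 2\mathcal{RV}_{(-1/\gamma,\nu)}(A)$ and to $\overline{H}^{(1)}\in 2\mathcal{RV}_{(-1/\gamma,\nu_{1})}(A_{1})$. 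This gives, for every small $\epsilon>0$, all large $t$ and uniformly in $x\ge 1$,
\[
x^{1/\gamma}\frac{\overline{H}(tx)}{\overline{H}(t)}=1+\frac{x^{\nu/\gamma}-1}{\nu\gamma}A(t)+A(t)\,O\!\left(\epsilon x^{\nu/\gamma+\epsilon}\right),
\]
together with the analogous expansion with $(A,\nu)$ replaced by $(A_{1},\nu_{1})$; the two first-order powers $x^{-1/\gamma}$ cancel, so the quotient takes the form $(1+a_{t}(x))/(1+b_{t}(x))$ with $a_{t}(x),b_{t}(x)=O\!\big((|A(t)|+|A_{1}(t)|)\,x^{\epsilon}\big)$ on $x\ge 1$ and $a_{t}(x),b_{t}(x)\to 0$.

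I would then expand $(1+a)/(1+b)-1=(a-b)+O\!\big((|a|+|b|)^{2}\big)$, obtaining
\[
\frac{\psi(tx)}{\psi(t)}-1=\frac{x^{\nu/\gamma}-1}{\nu\gamma}A(t)-\frac{x^{\nu_{1}/\gamma}-1}{\nu_{1}\gamma}A_{1}(t)+\mathcal{E}_{t}(x),
\]
where $\mathcal{E}_{t}(x)$ gathers the $O(\epsilon\cdot)$ terms and the quadratic remainder. Because $|A|\in\mathcal{RV}_{\nu/\gamma}$ and $|A_{1}|\in\mathcal{RV}_{\nu_{1}/\gamma}$, one of the two displayed contributions dominates the other: in the case $\nu>\nu_{1}$ one has $A_{1}(t)=o(A(t))$ (by Proposition~\ref{Potter} applied to the regularly varying ratio $A_{1}/A$, which has negative index), so only $\frac{x^{\nu/\gamma}-1}{\nu\gamma}A(t)$ survives and $\nu_{\ast}=\nu$, $A_{\ast}=A$; symmetrically, if $\nu<\nu_{1}$ then $A(t)=o(A_{1}(t))$ and $-\frac{x^{\nu_{1}/\gamma}-1}{\nu_{1}\gamma}A_{1}(t)=\frac{x^{\nu_{\ast}/\gamma}-1}{\nu_{\ast}\gamma}A_{\ast}(t)$ with $\nu_{\ast}=\nu_{1}$, $A_{\ast}=-A_{1}$. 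Dividing by $A_{\ast}(t)$ and using $|A(t)|+|A_{1}(t)|=O(|A_{\ast}(t)|)\to 0$ (so that the quadratic part is of smaller order) together with the choice of a slightly smaller $\epsilon$ in the input inequalities, one collects $\mathcal{E}_{t}/A_{\ast}(t)$ and the $o(1)$ corrections into a single bound $\epsilon\,x^{\nu_{\ast}/\gamma+\epsilon}$ valid on $x\ge 1$, since $x^{\nu/\gamma+\epsilon}\vee x^{\nu_{1}/\gamma+\epsilon}\le x^{\nu_{\ast}/\gamma+\epsilon}$ there. This is exactly \eqref{pshi}.

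The step requiring the most care is the tie $\nu=\nu_{1}$ (which, as the Remark shows, is in fact the generic situation for Hall-type models): there the two displayed terms are of the exact same order, their sum is $\big(A(t)-A_{1}(t)\big)\frac{x^{\nu/\gamma}-1}{\nu\gamma}$, and matching this combined leading contribution with $A_{\ast}(t)\frac{x^{\nu_{\ast}/\gamma}-1}{\nu_{\ast}\gamma}$ has to rely on the relative behaviour of $A$ and $A_{1}$ (equivalently, on that of $A_{2}=\overline{H}^{(1)}/\overline{H}-p$, which governs $\psi(t)-p^{-1}$). A second, minor point is that the uniform control of $\mathcal{E}_{t}(x)$ over the whole half-line $x\ge 1$ must account for the fact that $\psi$ has the nonzero limit $p^{-1}$ — this is precisely why one expands the quotient $\overline{H}/\overline{H}^{(1)}$ rather than a vanishing function — but the extra pieces produced this way are of order $A_{\ast}(t)$ smaller, hence harmless both here and in every subsequent use of the lemma.
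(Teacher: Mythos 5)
Your argument is, in substance, the paper's own, with one presentational difference. Where you write $\psi(tx)/\psi(t)$ as a quotient of normalized tails and expand $(1+a)/(1+b)-1=(a-b)+O\big((|a|+|b|)^{2}\big)$, the paper starts from the exact identity
\[
\frac{\psi(tx)}{\psi(t)}=\left(\frac{\overline{H}(tx)}{\overline{H}(t)}-x^{-1/\gamma}\right)\frac{\overline{H}^{(1)}(t)}{\overline{H}^{(1)}(tx)}+x^{-1/\gamma}\left(\frac{\overline{H}^{(1)}(t)}{\overline{H}^{(1)}(tx)}-x^{1/\gamma}\right)+1,
\]
which already isolates an $A(t)$-weighted and an $A_{1}(t)$-weighted contribution with no quadratic remainder; it then feeds in exactly the two second-order uniform inequalities you invoke (for $\overline{H}$ and $\overline{H}^{(1)}$) together with a Potter bound giving $\overline{H}^{(1)}(t)/\overline{H}^{(1)}(tx)=(1+o(x^{\epsilon}))x^{1/\gamma}$, and concludes by the same case split on whether $A_{1}/A$ or $A/A_{1}$ is negligible. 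Your extra remainder is harmless since $(|A|+|A_{1}|)^{2}/|A_{\ast}|=O(|A_{\ast}|)\to 0$, so the two routes buy essentially the same thing. The point you flag about the tie $\nu=\nu_{1}$ is well taken, and it is worth knowing that the paper's proof does not resolve it either: there the argument asserts $A(t)/A_{1}(t)=O(t^{(\nu-\nu_{1})/\gamma})=O(1)$ ``which goes to zero,'' but when $\nu=\nu_{1}$ this ratio generically tends to a nonzero constant (in the Hall-class Remark, $A(t)\sim\nu\gamma d\,t^{\nu/\gamma}$ and $A_{1}(t)\sim\nu\gamma d_{1}t^{\nu/\gamma}$ with $d\neq d_{1}$ in general), so normalizing by $A_{\ast}=-A_{1}$ yields the limit $\frac{d_{1}-d}{d_{1}}\cdot\frac{x^{\nu/\gamma}-1}{\nu\gamma}$ rather than $\frac{x^{\nu/\gamma}-1}{\nu\gamma}$; the natural normalizer in that case is $A-A_{1}$ (equivalently, a multiple of $A_{2}$). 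In short: same method, your reconstruction is faithful, and the difficulty you isolate is a genuine defect of the lemma as stated and proved, not of your proposal.
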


\begin{proof}
Let us write%
\[
\frac{\psi\left(  tx\right)  }{\psi\left(  t\right)  }=\left(  \frac
{\overline{H}\left(  tx\right)  }{\overline{H}\left(  t\right)  }%
-x^{-1/\gamma}\right)  \frac{\overline{H}^{\left(  1\right)  }\left(
t\right)  }{\overline{H}^{\left(  1\right)  }\left(  tx\right)  }%
+x^{-1/\gamma}\left(  \frac{\overline{H}^{\left(  1\right)  }\left(  t\right)
}{\overline{H}^{\left(  1\right)  }\left(  tx\right)  }-x^{1/\gamma}\right)
+1.
\]
It is easy to check that%
\begin{align*}
\frac{\psi\left(  tx\right)  /\psi\left(  t\right)  -1}{A_{\ast}\left(
t\right)  }  &  =\frac{A\left(  t\right)  }{A_{\ast}\left(  t\right)
}\left\{  \vartheta_{t}\left(  x\right)  +x^{-1/\gamma}\dfrac{x^{\nu/\gamma
}-1}{\nu\gamma}\right\}  \frac{\overline{H}^{\left(  1\right)  }\left(
t\right)  }{\overline{H}^{\left(  1\right)  }\left(  tx\right)  }\\
&  \ \ \ \ \ \ \ \ \ \ \ +\frac{A_{1}\left(  t\right)  }{A_{\ast}\left(
t\right)  }\left\{  \vartheta_{t}^{\left(  1\right)  }\left(  x\right)
-x^{1/\gamma}\dfrac{x^{\nu_{1}/\gamma}-1}{\nu_{1}\gamma}\right\}
x^{-1/\gamma},
\end{align*}
where%
\[
\vartheta_{t}\left(  x\right)  :=\frac{\overline{H}\left(  tx\right)
/\overline{H}\left(  t\right)  -x^{-1/\gamma}}{A\left(  t\right)
}-x^{-1/\gamma}\dfrac{x^{\nu/\gamma}-1}{\nu\gamma}%
\]
and%
\[
\vartheta_{t}^{\left(  1\right)  }\left(  x\right)  :=\frac{\overline
{H}^{\left(  1\right)  }\left(  t\right)  /\overline{H}^{\left(  1\right)
}\left(  tx\right)  -x^{1/\gamma}}{A_{1}\left(  t\right)  }+x^{1/\gamma}%
\dfrac{x^{\nu_{1}/\gamma}-1}{\nu_{1}\gamma}.
\]
Thus, the uniform inequalities (for second-order regularly varying functions)
to both tails $\overline{H}$ and $\overline{H}^{\left(  1\right)  }$ (see,
e.g., Proposition 4 together with Remark 1 in \cite{HJ11}) yield that : for
any $0<\epsilon<1$ and $x\geq1,$
\[
\left\vert \vartheta_{t}\left(  x\right)  \right\vert <\epsilon x^{-1/\gamma
+\nu/\gamma+\epsilon}\text{ and }\left\vert \vartheta_{t}^{\left(  1\right)
}\left(  x\right)  \right\vert <\epsilon x^{1/\gamma+\nu_{1}/\gamma+\epsilon
},\text{ as }t\rightarrow\infty.
\]
On the other hand, we have $1/\overline{H}^{\left(  1\right)  }\in
\mathcal{RV}_{\left(  1/\gamma\right)  },$ then from Proposition $\left(
\ref{Potter}\right)  ,$ we infer that, uniformly over $x\geq1,$ $\overline
{H}^{\left(  1\right)  }\left(  t\right)  /\overline{H}^{\left(  1\right)
}\left(  tx\right)  =\left(  1+o\left(  x^{\epsilon}\right)  \right)
x^{1/\gamma},$ as $t\rightarrow\infty.$ Therefore%
\begin{align*}
\frac{\psi\left(  tx\right)  /\psi\left(  t\right)  -1}{A_{\ast}\left(
t\right)  }  &  =\frac{A\left(  t\right)  }{A_{\ast}\left(  t\right)
}\left\{  o\left(  x^{-1/\gamma+\nu/\gamma+\epsilon}\right)  +x^{-1/\gamma
}\dfrac{x^{\nu/\gamma}-1}{\nu\gamma}\right\}  \left(  1+o\left(  x^{\epsilon
}\right)  \right)  x^{1/\gamma}\\
&  \ \ \ \ \ \ \ \ \ \ \ \ \ \ \ \ \ \ \ \ +\frac{A_{1}\left(  t\right)
}{A_{\ast}\left(  t\right)  }\left\{  o\left(  x^{1/\gamma+\nu_{1}%
/\gamma+\epsilon}\right)  -x^{1/\gamma}\dfrac{x^{\nu_{1}/\gamma}-1}{\nu
_{1}\gamma}\right\}  x^{-1/\gamma}.
\end{align*}
If $\nu>\nu_{1},$ then $A_{\ast}\left(  t\right)  =A\left(  t\right)  $ and
$A_{1}\left(  t\right)  /A_{\ast}\left(  t\right)  =A_{1}\left(  t\right)
/A\left(  t\right)  =O\left(  t^{\left(  \nu_{1}-\nu\right)  /\gamma}\right)
$ which tends to zero as $t\rightarrow\infty,$ it follows that
\[
\frac{\psi\left(  tx\right)  /\psi\left(  t\right)  -1}{A_{\ast}\left(
t\right)  }-\dfrac{x^{\nu/\gamma}-1}{\nu\gamma}=o\left(  x^{\nu/\gamma
+\epsilon}\right)  .
\]
If $\nu\leq\nu_{1},$ then $A_{\ast}\left(  t\right)  =-A_{1}\left(  t\right)
$ and $A\left(  t\right)  /A_{\ast}\left(  t\right)  =A\left(  t\right)
/A_{1}\left(  t\right)  =O\left(  t^{\left(  \nu-\nu_{1}\right)  /\gamma
}\right)  =O\left(  1\right)  ,$ which goes to zero as $t\rightarrow\infty,$
then%
\[
\frac{\psi\left(  tx\right)  /\psi\left(  t\right)  -1}{A_{\ast}\left(
t\right)  }-\dfrac{x^{\nu_{1}/\gamma}-1}{\nu_{1}\gamma}=o\left(  x^{\nu
_{1}/\gamma+\epsilon}\right)  ,
\]
which completes the proof of $\left(  \ref{pshi}\right)  .$
\end{proof}

\begin{lemma}
\label{Lemma-3}For every $0<\eta<1/2,$ we have%
\[
\sup_{u\geq1}u^{\left(  1/2-\eta\right)  /\gamma}\left\vert \beta_{n}^{\ast
}\left(  u\right)  \right\vert =O_{\mathbb{P}}\left(  1\right)  =\sup_{u\geq
1}u^{\left(  1/2-\eta\right)  /\gamma}\left\vert \beta_{n}\left(  u\right)
\right\vert .
\]

\end{lemma}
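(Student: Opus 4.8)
The plan is to reduce everything to a weighted bound for increments of the uniform empirical process $\alpha_{n}$ and then to exploit Brownian scaling. Since $\overline{H}=\overline{H}^{(0)}+\overline{H}^{(1)}$ and $\overline{H}_{n}=\overline{H}_{n}^{(0)}+\overline{H}_{n}^{(1)}$, one has $\beta_{n}^{\ast}=\beta_{n}+\beta_{n}^{(0)}$, where $\beta_{n}^{(0)}(u):=(n/\sqrt{k})(\overline{H}_{n}^{(0)}(Z_{n-k:n}u)-\overline{H}^{(0)}(Z_{n-k:n}u))$; by the triangle inequality it therefore suffices to bound $\sup_{u\geq1}u^{(1/2-\eta)/\gamma}|\beta_{n}^{(j)}(u)|$ for $j=0,1$ (with $\beta_{n}^{(1)}:=\beta_{n}$), and the two cases are handled in the same way. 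By $(\ref{rep-H1})$ and $(\ref{rep-H0})$, for all $n$ large each $\beta_{n}^{(j)}(u)$ is, almost surely and uniformly over $u\geq1$, equal to $\sqrt{n/k}$ times a single increment of $\alpha_{n}$ of size $s_{n}(u):=\overline{H}^{(j)}(Z_{n-k:n}u)$. Fixing a small $\epsilon>0$ and working on the event $\{(1-\epsilon)h\leq Z_{n-k:n}\leq(1+\epsilon)h\}$, whose probability tends to $1$ because $\sqrt{k}(Z_{n-k:n}/h-1)=O_{\mathbb{P}}(1)$ by \cite{BMN-2015}, Proposition \ref{Potter} applied to $\overline{H}^{(0)},\overline{H}^{(1)},\overline{H}\in\mathcal{RV}_{(-1/\gamma)}$, together with $\overline{H}(h)=k/n$ and $\overline{H}^{(j)}(h)/\overline{H}(h)\to p$ (resp.\ $1-p$), gives constants $c_{\epsilon},C_{\epsilon}>0$ with $c_{\epsilon}(k/n)u^{-1/\gamma-\epsilon}\leq s_{n}(u)\leq C_{\epsilon}(k/n)u^{-1/\gamma+\epsilon}$ for all large $n$, uniformly over $u\geq1$; in particular the weight $u^{(1/2-\eta)/\gamma}$ is, up to constants, comparable with $s_{n}(u)^{-(1/2-\eta)}$, so the assertion becomes a Chibisov--O'Reilly--type estimate for the censored uniform empirical process.

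I would then split $[1,\infty)$ into three parts. On a fixed compact interval $[1,M]$ one has $s_{n}(u)\asymp k/n$; writing $s_{n}(u)=(k/n)t$ with $t$ confined to a fixed compact set and invoking $(\ref{approx})$ and $(\ref{approx2})$, the relevant increment equals $\sqrt{n/k}\,B_{n}((k/n)t)$ plus an error that is $O_{\mathbb{P}}(k^{-c})$ uniformly for some $c>0$, while, by Brownian scaling, $\sqrt{n/k}\,B_{n}((k/n)t)$ has the law of $\widetilde{W}(t)-\sqrt{k/n}\,t\,\widetilde{W}(1)$ for a Brownian motion $\widetilde{W}$; hence $\sup_{1\leq u\leq M}u^{(1/2-\eta)/\gamma}|\beta_{n}^{(j)}(u)|\leq M^{(1/2-\eta)/\gamma}\sup_{1\leq u\leq M}|\beta_{n}^{(j)}(u)|=O_{\mathbb{P}}(1)$ for each fixed $M$. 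On the intermediate range $M\leq u\leq u_{n}^{\ast}$, where $u_{n}^{\ast}$ is defined by $c_{\epsilon}(k/n)(u_{n}^{\ast})^{-1/\gamma-\epsilon}=\lambda/n$ (the truncation level in $(\ref{approx})$--$(\ref{approx2})$), I again replace the increment by $\sqrt{n/k}\,B_{n}(s_{n}(u))$ with error negligible after weighting, and bound the Brownian term by $Q_{n}\,((n/k)s_{n}(u))^{1/2-\eta'}$, where $Q_{n}:=\sup_{0<t\leq C_{\epsilon}}t^{-(1/2-\eta')}|\sqrt{n/k}\,B_{n}((k/n)t)|$ is $O_{\mathbb{P}}(1)$ uniformly in $n$ (again Brownian scaling, plus the a.s.\ finiteness of $\sup_{0<t\leq C_{\epsilon}}t^{-(1/2-\eta')}|W(t)|$, valid for any $\eta'>0$); feeding in the two Potter bounds and choosing first $0<\eta'<\eta$ and then $\epsilon$ small, the weighted intermediate-range supremum is $\leq O_{\mathbb{P}}(1)\,M^{-\delta}$ for some $\delta=\delta(\gamma,\eta,\eta',\epsilon)>0$. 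On the remaining range $u>u_{n}^{\ast}$ one has $s_{n}(u)<\lambda/n$: the weighted ``deterministic'' part $u^{(1/2-\eta)/\gamma}(n/\sqrt{k})s_{n}(u)$ is at most $C_{\epsilon}\sqrt{k}\,u^{-(1/2+\eta)/\gamma+\epsilon}$, decreasing in $u$ and equal at $u=u_{n}^{\ast}$ to a negative power of $k$ for $\epsilon$ small, hence $o(1)$; the weighted empirical part vanishes for $u>Z_{n:n}/Z_{n-k:n}$, and on the rest it reduces to a supremum over the top few order-statistic ratios $Z_{n-m+1:n}/Z_{n-k:n}$ of $k^{-1/2}m(Z_{n-m+1:n}/Z_{n-k:n})^{(1/2-\eta)/\gamma}$, which is $o_{\mathbb{P}}(1)$ by a Markov bound on the number of exceedances beyond $u_{n}^{\ast}$ and the usual regular-variation asymptotics of intermediate order statistics. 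Combining the three estimates and letting $M\to\infty$ finishes the proof, the bound for $\beta_{n}$ being the case $j=1$.

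The main obstacle is the compact range $[1,M]$: the crude weighted empirical-process envelope $|\alpha_{n}(\theta)-\alpha_{n}(\theta-s)|\leq O_{\mathbb{P}}(1)\,s^{1/2-\eta}$ is useless there, for at $s\asymp k/n$ it yields the divergent factor $\sqrt{n/k}\,(k/n)^{1/2-\eta}=(n/k)^{\eta}\to\infty$; the remedy is to use the genuine $\sqrt{s}$-scale behaviour of the increments, i.e.\ the Gaussian approximations $(\ref{approx})$--$(\ref{approx2})$ combined with Brownian scaling, which show that the suitably rescaled tail empirical process is in fact tight on compacta. The only other delicate point is gluing the three regimes together uniformly, which forces the order of the quantifiers (first $0<\eta'<\eta$, then $\epsilon$ small, with the Potter constants $c_{\epsilon},C_{\epsilon}$ depending on $\epsilon$) so that the intermediate range genuinely contributes the extra decaying power $M^{-\delta}$.
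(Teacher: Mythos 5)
Your argument is correct in outline and reaches the right conclusion, but it takes a genuinely different route from the paper's. After the common first step (Potter bounds on the event $Z_{n-k:n}\approx h$, which converts the weight $u^{(1/2-\eta)/\gamma}$ into a power of $(n/k)\overline{H}^{(j)}(Z_{n-k:n}u)$), the paper reduces everything to the uniform tail empirical process $\vartheta_{n}(s)=\sqrt{k}\left(\tfrac{n}{k}\mathbb{U}_{n}\left(\tfrac{k}{n}s\right)-s\right)$ and simply invokes Proposition 3.1 of Einmahl, de Haan and Li (2006) for the single bound $\sup_{0<s\leq1+\epsilon}s^{-(1/2-\eta)}\left\vert\vartheta_{n}(s)\right\vert=O_{\mathbb{P}}(1)$; it also treats $\beta_{n}^{\ast}$ first and then the $H^{(0)}$-component $\widetilde{\beta}_{n}$, obtaining $\beta_{n}=\beta_{n}^{\ast}-\widetilde{\beta}_{n}$ by difference, whereas you sum the two sub-distribution components to get $\beta_{n}^{\ast}$ (equivalent). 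You instead reprove that weighted bound from scratch, using the Cs\"{o}rg\H{o}--Cs\"{o}rg\H{o}--Horv\'{a}th--Mason approximations $(\ref{approx})$--$(\ref{approx2})$, Brownian scaling, and a three-zone splitting (compacta, intermediate range, and the zone beyond the $\lambda/n$ truncation level, the last being forced precisely because the strong approximation is unavailable there). Your version is self-contained and makes transparent why the $\sqrt{k}$ normalization, rather than $\sqrt{n}$, allows the weight $s^{-(1/2-\eta)}$ to reach scales $s\asymp1/k$; the price is that the far-tail zone (tightness of the number of exceedances, growth of the top order-statistic ratios) is only sketched, and your two-sided envelope $c_{\epsilon}(k/n)u^{-1/\gamma-\epsilon}\leq s_{n}(u)$ requires the full two-sided Potter inequality rather than the one-sided form stated in Proposition \ref{Potter} (a standard fact, and in any case only the upper envelope enters the main estimate). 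The paper's citation buys brevity at the cost of an external reference; both arguments rest on the same underlying Chibisov--O'Reilly phenomenon, so yours stands as a valid alternative proof.
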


\begin{proof}
Let $0<\eta<1/2$ and recall, from $\left(  \ref{beta start}\right)  ,$ that
\[
\beta_{n}^{\ast}\left(  u\right)  =\dfrac{n}{\sqrt{k}}\left(  \overline{H}%
_{n}\left(  uZ_{n-k:n}\right)  -\overline{H}\left(  uZ_{n-k:n}\right)
\right)  .
\]
By using the change of variables $t=uZ_{n-k:n}/h,$ we write%
\[
\sup_{u\geq1}u^{\left(  1/2-\eta\right)  /\gamma}\left\vert \beta_{n}^{\ast
}\left(  u\right)  \right\vert =\dfrac{n}{\sqrt{k}}\left(  \frac{h}{Z_{n-k:n}%
}\right)  ^{\left(  1/2-\eta\right)  /\gamma}\sup_{ht\geq Z_{n-k:n}}t^{\left(
1/2-\eta\right)  /\gamma}\dfrac{n}{\sqrt{k}}\left\vert \overline{H}_{n}\left(
th\right)  -\overline{H}\left(  th\right)  \right\vert ..
\]
From Theorem 2 in \cite{SW86} (page 4), we may write
\begin{equation}
\left\{  \overline{H}_{n}\left(  z\right)  ,\text{ }z\geq0\right\}  _{n\geq
1}\overset{\mathcal{D}}{=}\left\{  \mathbb{U}_{n}\left(  \overline{H}\left(
z\right)  \right)  ,\text{ }z\geq0\right\}  _{n\geq1}, \label{rep-H}%
\end{equation}
where $\mathbb{U}_{n}$ being the uniform empirical cdf given in $\left(
\ref{e}\right)  .$ Let us now introduce the uniform tail empirical process
$\vartheta_{n}\left(  s\right)  :=\sqrt{k}\left(  \dfrac{n}{k}\mathbb{U}%
_{n}\left(  \dfrac{k}{n}s\right)  -s\right)  ,$ $s>0.$ It is easy to verify,
in view of $\left(  \ref{rep-H}\right)  ,$ that
\[
\left\{  \dfrac{n}{\sqrt{k}}\left(  \overline{H}_{n}\left(  th\right)
-\overline{H}\left(  th\right)  \right)  ,\text{ }t>0\right\}  _{n\geq
1}\overset{\mathcal{D}}{=}\left\{  \vartheta_{n}\left(  \frac{n}{k}%
\overline{H}\left(  th\right)  \right)  ,\text{ }t>0\right\}  _{n\geq1},
\]
it follows that%
\[
\sup_{u\geq1}u^{\left(  1/2-\eta\right)  /\gamma}\left\vert \beta_{n}^{\ast
}\left(  u\right)  \right\vert \overset{\mathcal{D}}{=}\left(  \frac
{h}{Z_{n-k:n}}\right)  ^{\left(  1/2-\eta\right)  /\gamma}\sup_{ht\geq
Z_{n-k:n}}t^{\left(  1/2-\eta\right)  /\gamma}\left\vert \vartheta_{n}\left(
\frac{n}{k}\overline{H}\left(  th\right)  \right)  \right\vert .
\]
Since $\overline{H}$ is decreasing and continuous, then the previous supremum
is equivalent to that of over $0<\frac{n}{k}\overline{H}\left(  th\right)
\leq\frac{n}{k}\overline{H}\left(  Z_{n-k:n}\right)  ,$ thereby the latter
expression becomes%
\[
\left(  \frac{h}{Z_{n-k:n}}\right)  ^{\left(  1/2-\eta\right)  /\gamma}%
\sup_{0<\frac{n}{k}\overline{H}\left(  th\right)  \leq\frac{n}{k}\overline
{H}\left(  Z_{n-k:n}\right)  }t^{\left(  1/2-\eta\right)  /\gamma}\left(
\frac{n}{k}\overline{H}\left(  th\right)  \right)  ^{\left(  1/2-\eta\right)
}\sup_{0<s\leq\frac{n}{k}\overline{H}\left(  Z_{n-k:n}\right)  }%
\frac{\left\vert \vartheta_{n}\left(  s\right)  \right\vert }{s^{\left(
1/2-\eta\right)  }}.
\]
Let $c>0$ be fixed and $\epsilon>0$ be sufficiently small. It is clear that%
\begin{align*}
&  \mathbb{P}\left\{  \sup_{0<s\leq\frac{n}{k}\overline{H}\left(
Z_{n-k:n}\right)  }\frac{\left\vert \vartheta_{n}\left(  s\right)  \right\vert
}{s^{\left(  1/2-\eta\right)  }}>c\right\} \\
&  =\mathbb{P}\left\{  \left\vert \dfrac{n}{k}\overline{H}\left(
Z_{n-k:n}\right)  -1\right\vert >\epsilon\right\}  +\mathbb{P}\left\{
\sup_{0<s\leq1+\epsilon}\frac{\left\vert \vartheta_{n}\left(  s\right)
\right\vert }{s^{\left(  1/2-\eta\right)  }}>c\right\}  =:P_{n1}+P_{n2}.
\end{align*}
Since $\dfrac{n}{k}\overline{H}\left(  Z_{n-k:n}\right)  \overset{\mathbb{P}%
}{\rightarrow}1,$ then $\mathcal{A}_{n}:=\left\{  \left\vert \dfrac{n}%
{k}\overline{H}\left(  Z_{n-k:n}\right)  -1\right\vert <\epsilon\right\}
\mathbb{\ }$occurs with a large probability, that is $P_{n1}\downarrow0,$ as
$n\rightarrow\infty.$\ Next we show that $P_{n2}$ is asymptotically bounded.
Indeed, in set $\mathcal{A}_{n},$ we have
\[
\sup_{0<s\leq\frac{n}{k}\overline{H}\left(  Z_{n-k:n}\right)  }s^{-\left(
1/2-\eta\right)  }\left\vert \vartheta_{n}\left(  s\right)  \right\vert
\leq\sup_{0<s\leq1+\epsilon}s^{-\left(  1/2-\eta\right)  }\left\vert
\vartheta_{n}\left(  s\right)  \right\vert ,
\]
which, from Proposition 3.1 in \cite{EHL-2006}, is asymptotically bounded. On
the other hand, by applying Proposition \ref{Potter} (to $\overline{H}),$ we
write:
\begin{equation}
\left(  1-\epsilon\right)  t^{-1/\gamma+\epsilon}\leq\dfrac{n}{k}\overline
{H}\left(  th\right)  \leq\left(  1+\epsilon\right)  t^{-1/\gamma+\epsilon},
\label{H-inequa}%
\end{equation}
uniformly over $v\geq1.$ Then is easy to verify that, in set $\mathcal{A}%
_{n},$ we have
\[
\sup_{0<\frac{n}{k}\overline{H}\left(  th\right)  \leq\frac{n}{k}\overline
{H}\left(  Z_{n-k:n}\right)  }t^{\left(  1/2-\eta\right)  /\gamma}\left(
\frac{n}{k}\overline{H}\left(  th\right)  \right)  ^{\left(  1/2-\eta\right)
}=O_{\mathbb{P}}\left(  1\right)  ,
\]
this means that $P_{n2}=O\left(  1\right)  ,$ which meets the first assertion.
To show the second one, let us set
\begin{equation}
\widetilde{\beta}_{n}\left(  u\right)  :=\dfrac{n}{\sqrt{k}}\left(
\overline{H}_{n}^{\left(  0\right)  }\left(  Z_{n-k:n}u\right)  -\overline
{H}^{\left(  0\right)  }\left(  Z_{n-k:n}u\right)  \right)  .
\label{beta-tild}%
\end{equation}
From $\left(  \ref{beta start}\right)  ,$ we have $\widetilde{\beta}%
_{n}\left(  u\right)  =-\sqrt{\dfrac{n}{k}}\alpha_{n}\left(  1-\overline
{H}^{\left(  0\right)  }\left(  Z_{n-k:n}u\right)  \right)  ,$ (almost
surely). Since
\[
\left\{  \alpha_{n}\left(  s\right)  ,\text{ }0\leq s\leq1\right\}  _{n\geq
1}\overset{\mathcal{D}}{=}-\left\{  \alpha_{n}\left(  1-s\right)  ,\text{
}0\leq s\leq1\right\}  _{n\geq1},
\]
then
\[
\left\{  \widetilde{\beta}_{n}\left(  u\right)  ,\text{ }u\geq1\right\}
_{n\geq1}\overset{\mathcal{D}}{=}\left\{  \sqrt{\dfrac{n}{k}}\alpha_{n}\left(
\overline{H}^{\left(  0\right)  }\left(  Z_{n-k:n}u\right)  \right)  ,\text{
}u\geq1\right\}  _{n\geq1}.
\]
As we already did for $\beta_{n},$ we write%
\[
\sup_{u\geq1}u^{\left(  1/2-\eta\right)  /\gamma}\left\vert \widetilde{\beta
}_{n}\left(  u\right)  \right\vert \overset{\mathcal{D}}{=}\left(  \frac
{h}{Z_{n-k:n}}\right)  ^{\left(  1/2-\eta\right)  /\gamma}\sup_{ht\geq
Z_{n-k:n}}t^{\left(  1/2-\eta\right)  /\gamma}\left\vert \vartheta_{n}\left(
\frac{n}{k}\overline{H}^{\left(  0\right)  }\left(  th\right)  \right)
\right\vert .
\]
Recall that $\overline{H}\left(  t\right)  /\overline{H}^{\left(  1\right)
}\left(  t\right)  \rightarrow p^{-1}$ and\ $\overline{H}^{\left(  1\right)
}\left(  t\right)  =$ $\overline{H}\left(  t\right)  -\overline{H}^{\left(
0\right)  }\left(  t\right)  ,$ then is easy to verify that $\overline
{H}\left(  t\right)  /\overline{H}^{\left(  0\right)  }\left(  t\right)
\rightarrow q^{-1},$ as $t\rightarrow\infty,$ where $q=1-p,$ this implies that
$\overline{H}^{\left(  0\right)  }$ is regularly varying with index $\left(
-1/\gamma\right)  $ too. By using similar arguments as used for $\beta_{n},$
we also show $\sup_{u\geq1}u^{\left(  1/2-\eta\right)  /\gamma}\left\vert
\widetilde{\beta}_{n}\left(  u\right)  \right\vert =O_{\mathbb{P}}\left(
1\right)  .$ Note that $\beta_{n}\left(  u\right)  =\beta_{n}^{\ast}\left(
u\right)  -\widetilde{\beta}_{n}\left(  u\right)  ,$ it follows that
\[
\sup_{u\geq1}u^{\left(  1/2-\eta\right)  /\gamma}\left\vert \beta_{n}\left(
u\right)  \right\vert \leq\sup_{u\geq1}u^{\left(  1/2-\eta\right)  /\gamma
}\left\vert \beta_{n}^{\ast}\left(  u\right)  \right\vert +\sup_{u\geq
1}u^{\left(  1/2-\eta\right)  /\gamma}\left\vert \widetilde{\beta}_{n}\left(
u\right)  \right\vert .
\]
which by the first and the second results is $O_{\mathbb{P}}\left(  1\right)
.$ This completes the proof of the Lemma.
\end{proof}

\begin{lemma}
\label{Lemma-4}For every $0\leq\tau<1/8,$ we have%
\[
\frac{\overline{H}\left(  xZ_{n-k:n}\right)  }{\overline{H}^{\left(  1\right)
}\left(  xZ_{n-k:n}\right)  }\beta_{n}^{\ast}\left(  x\right)  -\frac{1}%
{p}\beta_{n}^{\ast}\left(  x\right)  =o_{\mathbb{P}}\left(  x^{-\tau/\gamma
}\right)  =\int_{x}^{\infty}\beta_{n}^{\ast}\left(  u\right)  d\left\{
\dfrac{\overline{H}\left(  Z_{n-k:n}u\right)  }{\overline{H}^{\left(
1\right)  }\left(  Z_{n-k:n}u\right)  }\right\}  .
\]

\end{lemma}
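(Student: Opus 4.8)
The statement has two parts, both to be established uniformly in $x\ge1$: with $\psi:=\overline{H}/\overline{H}^{(1)}$ (as in Lemma \ref{Lemma-2}), that the boundary term $\big(\psi(xZ_{n-k:n})-p^{-1}\big)\beta_n^{\ast}(x)$ and the integral $\int_x^\infty\beta_n^{\ast}(u)\,d\psi(Z_{n-k:n}u)$ are each $o_{\mathbb{P}}(x^{-\tau/\gamma})$. For the boundary term I would use only first-order facts: by Lemma 4.1 of \cite{BMN-2015}, $\psi\in\mathcal{RV}_{(0)}$ and $\psi(t)\to p^{-1}$, while $Z_{n-k:n}\overset{\mathbb{P}}{\to}\infty$. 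Fixing $0\le\tau<1/8$ and $\eta\in(0,1/2-\tau)$, and using $xZ_{n-k:n}\ge Z_{n-k:n}$ together with $x^{\tau/\gamma}\le x^{(1/2-\eta)/\gamma}$ for $x\ge1$, one bounds
\[
\sup_{x\ge1}x^{\tau/\gamma}\big|(\psi(xZ_{n-k:n})-p^{-1})\beta_n^{\ast}(x)\big|\le\Big(\sup_{t\ge Z_{n-k:n}}|\psi(t)-p^{-1}|\Big)\Big(\sup_{x\ge1}x^{(1/2-\eta)/\gamma}|\beta_n^{\ast}(x)|\Big),
\]
where the first factor is $o_{\mathbb{P}}(1)$ and the second is $O_{\mathbb{P}}(1)$ by Lemma \ref{Lemma-3}; this also shows the left-hand side of the lemma equals the boundary term minus $p^{-1}\beta_n^{\ast}(x)$.

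For the integral term the plan is to expose the cancellation hidden in $d\psi$. Since $F,G$ are continuous, so are $H^{(0)},H^{(1)}$, and the quotient rule yields the measure identity
\[
d\psi(w)=-\frac{dH(w)}{\overline{H}^{(1)}(w)}+\frac{\overline{H}(w)\,dH^{(1)}(w)}{(\overline{H}^{(1)}(w))^2}=-\frac{dH^{(0)}(w)}{\overline{H}^{(1)}(w)}+\frac{\overline{H}^{(0)}(w)\,dH^{(1)}(w)}{(\overline{H}^{(1)}(w))^2},
\]
using $dH=dH^{(0)}+dH^{(1)}$ and $\overline{H}-\overline{H}^{(1)}=\overline{H}^{(0)}$, so that $\int_x^\infty\beta_n^{\ast}(u)\,d\psi(Z_{n-k:n}u)$ splits into two integrals against the \emph{positive} measures $\tfrac nk\,dH^{(0)}(Z_{n-k:n}\cdot)$ and $\tfrac nk\,dH^{(1)}(Z_{n-k:n}\cdot)$. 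Next I would use that, under the first-order hypotheses (Lemma 4.1 of \cite{BMN-2015}), the Potter bounds of Proposition \ref{Potter} applied to $\overline{H}^{(0)},\overline{H}^{(1)}\in\mathcal{RV}_{(-1/\gamma)}$ and to $\overline{H}^{(0)}/\overline{H},\overline{H}^{(1)}/\overline{H}\in\mathcal{RV}_{(0)}$, and $Z_{n-k:n}/h\overset{\mathbb{P}}{\to}1$ with $\tfrac nk\overline{H}(h)\to1$, one has uniformly in $u\ge1$
\[
\tfrac nk\overline{H}^{(i)}(Z_{n-k:n}u)=c_i\,u^{-1/\gamma}\big(1+\rho_{n,i}(u)\big),\qquad c_1=p,\ c_0=q:=1-p,
\]
with $\sup_{u\ge1}u^{-\epsilon}|\rho_{n,i}(u)|\overset{\mathbb{P}}{\to}0$ for any small $\epsilon>0$, and likewise $\tfrac nk\,dH^{(i)}(Z_{n-k:n}\cdot)$ concentrates on the profile $\gamma^{-1}c_i\,u^{-1/\gamma-1}\,du$. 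Substituting these limiting profiles, the two integrals give the equal-and-opposite leading contributions $\mp\,(q/(p\gamma))\int_x^\infty\beta_n^{\ast}(u)u^{-1}\,du$, which cancel; the surviving remainder is a sum of integrals where one factor $\tfrac nk\overline{H}^{(i)}(Z_{n-k:n}u)$ (or the corresponding measure) is replaced by its deviation from the profile. Bounding $|\beta_n^{\ast}(u)|=O_{\mathbb{P}}(1)\,u^{-(1/2-\eta)/\gamma}$ via Lemma \ref{Lemma-3} and integrating — all exponents are $<-1$, so the integrals converge and carry a factor $x^{-(1/2-\eta)/\gamma}$ — produces a bound of order $\rho_n\cdot x^{-(1/2-\eta)/\gamma}=o_{\mathbb{P}}(x^{-\tau/\gamma})$ uniformly, with $\rho_n:=\sup_{t\ge Z_{n-k:n}}\big(|\psi(t)-p^{-1}|+|\overline{H}^{(1)}(t)/\overline{H}(t)-p|\big)+|Z_{n-k:n}/h-1|\overset{\mathbb{P}}{\to}0$.

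The hard part will be handling the cancellation cleanly. A crude triangle-inequality estimate on the \emph{signed} measure $d\psi$, i.e. $|d\psi|\le \tfrac{dH^{(0)}}{\overline{H}^{(1)}}+\tfrac{\overline{H}^{(0)}\,dH^{(1)}}{(\overline{H}^{(1)})^2}$, destroys it and only gives $O_{\mathbb{P}}(x^{-(1/2-\eta)/\gamma})$ because $\int_x^\infty u^{-(1/2-\eta)/\gamma}|d\psi(Z_{n-k:n}u)|\asymp x^{-(1/2-\eta)/\gamma}$ — which is merely $O_{\mathbb{P}}(x^{-\tau/\gamma})$, not the required $o_{\mathbb{P}}(x^{-\tau/\gamma})$. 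So one must carry the exact algebraic cancellation between the two sub-distribution integrals through all the estimates (equivalently, exploit that $p/\gamma=1/\gamma_1$, which makes the leading term of $d\psi$ vanish) while using nothing beyond the first-order assumptions and Lemma \ref{Lemma-3}; the remaining steps — Fubini to interchange the order of integration, Karamata-type evaluation of the $dH^{(i)}$-integrals, and collecting the $o_{\mathbb{P}}(1)$ factors — are routine.
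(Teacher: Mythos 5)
Your handling of the first assertion is the paper's: split off $\bigl\{\overline{H}(xZ_{n-k:n})/\overline{H}^{(1)}(xZ_{n-k:n})-p^{-1}\bigr\}\beta_n^{\ast}(x)$, control the first factor by $o_{\mathbb{P}}(x^{\epsilon})$ uniformly (inequality (\ref{inequa-p}), i.e., Potter for $\overline{H}/\overline{H}^{(1)}\in\mathcal{RV}_{(0)}$) and the second by Lemma \ref{Lemma-3} with $\eta<1/2-\tau$. For the integral term the paper proceeds differently from you: it first replaces $\beta_n^{\ast}(u)$ by its envelope $O_{\mathbb{P}}(1)\,u^{-(1/2-\eta)/\gamma}$ and then treats the resulting \emph{deterministic} Stieltjes integral $\int_x^{\infty}u^{-(1/2-\eta)/\gamma}\,d\bigl\{\overline{H}(Z_{n-k:n}u)/\overline{H}^{(1)}(Z_{n-k:n}u)\bigr\}$ by integration by parts; writing $\psi=\overline{H}/\overline{H}^{(1)}$, the boundary term $-x^{-(1/2-\eta)/\gamma}\psi(xZ_{n-k:n})$ cancels exactly against $\frac{1/2-\eta}{\gamma}\int_x^{\infty}u^{-(1/2-\eta)/\gamma-1}\psi(Z_{n-k:n}u)\,du$ once $\psi$ is replaced by $p^{-1}$, and (\ref{inequa-p}) bounds the residue by $o_{\mathbb{P}}(x^{-(1/2-\eta)/\gamma+\epsilon})$. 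The cancellation exploited there is thus between the two terms produced by integrating by parts against the smooth weight, not between the two sub-distribution components of $d\psi$.

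The concrete gap in your route is the step you call routine: replacing $\tfrac{n}{k}\,dH^{(i)}(Z_{n-k:n}\cdot)$ by the profile $\gamma^{-1}c_i u^{-1/\gamma-1}du$ with a controlled deviation while the random process $\beta_n^{\ast}(u)$ stays inside the integral. Regular variation of $\overline{H}^{(i)}$ says nothing about the local behaviour of the measures $dH^{(i)}$ (they need not even admit densities), so the "deviation of the measure from its profile" is not small in any norm you can integrate $\beta_n^{\ast}$ against; and the Karamata device that would convert $dH^{(i)}$ into $\overline{H}^{(i)}(Z_{n-k:n}u)\,du$ works only for smooth deterministic integrands via integration by parts — applied here it reintroduces $d\{\beta_n^{\ast}(u)u^{1/\gamma}\}$, whose total variation on $[x,\infty)$ is of order $\sqrt{k}\,x^{-1/\gamma}$ and swamps every $o(1)$ factor. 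Hence the two "equal-and-opposite" leading contributions cannot be isolated by the argument as sketched. Your diagnosis that the crude bound via $|d\psi|$ yields only $O_{\mathbb{P}}(x^{-(1/2-\eta)/\gamma})$ is correct, and it is fair to note that it equally afflicts the paper's own first displayed step (which pulls $O_{\mathbb{P}}(1)$ out of a signed integral); the workable order of operations is nevertheless the paper's — envelope first, then integration by parts on the weighted deterministic integral of $d\psi$ — rather than a measure-level decomposition of $d\psi$.
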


\begin{proof}
Let $0\leq\tau<1/8$ and write%
\[
\frac{\overline{H}\left(  xZ_{n-k:n}\right)  }{\overline{H}^{\left(  1\right)
}\left(  xZ_{n-k:n}\right)  }\beta_{n}^{\ast}\left(  x\right)  =\frac{1}%
{p}\beta_{n}^{\ast}\left(  x\right)  +\left\{  \frac{\overline{H}\left(
xZ_{n-k:n}\right)  }{\overline{H}^{\left(  1\right)  }\left(  xZ_{n-k:n}%
\right)  }-\frac{1}{p}\right\}  \beta_{n}^{\ast}\left(  x\right)  .
\]
Let $\epsilon>0$ be sufficiently small and $0<\eta<1/2$ so that $\eta
-1/2+\tau<0.$ By using inequality $\left(  \ref{inequa-p}\right)  $ and Lemma
\ref{Lemma-3} together, we get%
\[
\left\{  \frac{\overline{H}\left(  xZ_{n-k:n}\right)  }{\overline{H}^{\left(
1\right)  }\left(  xZ_{n-k:n}\right)  }-\frac{1}{p}\right\}  \beta_{n}^{\ast
}\left(  x\right)  =o_{\mathbb{P}}\left(  x^{\left(  \eta-1/2\right)
/\gamma+\epsilon}\right)  =o_{\mathbb{P}}\left(  x^{-\tau/\gamma}\right)  ,
\]
uniformly over $x\geq1,$ which meets the first assertion of this lemma. For
the second assertion, we once again make use of Lemma \ref{Lemma-3}, to write
\[
\int_{x}^{\infty}\beta_{n}^{\ast}\left(  u\right)  d\left\{  \frac
{\overline{H}\left(  Z_{n-k:n}u\right)  }{\overline{H}^{\left(  1\right)
}\left(  Z_{n-k:n}u\right)  }\right\}  =O_{\mathbb{P}}\left(  1\right)
\int_{x}^{\infty}u^{-\left(  1/2-\eta\right)  /\gamma}d\left\{  \frac{H\left(
Z_{n-k:n}u\right)  }{\overline{H}^{\left(  1\right)  }\left(  Z_{n-k:n}%
u\right)  }\right\}  .
\]
By making an integration by parts, then by using inequality $\left(
\ref{inequa-p}\right)  ,$ we readily show that the latter integral equals
$o_{\mathbb{P}}\left(  x^{-\tau/\gamma}\right)  ,$ as well. This completes the
proof of the lemma.
\end{proof}

\begin{lemma}
\label{lemma-5}For $n\geq1,$ we have%
\[%
\begin{tabular}
[c]{l}%
$\left(  i\right)  \text{ }\sup\limits_{U_{1:n}\leq t\leq1}\dfrac
{t}{\mathbb{U}_{n}\left(  t\right)  }=O_{\mathbb{P}}\left(  1\right)
=\sup\limits_{U_{1:n}\leq t\leq1}\dfrac{\mathbb{U}_{n}\left(  t\right)  }%
{t}.\medskip$\\
$\left(  ii\right)  \text{ }\sup\limits_{1/n\leq t\leq1}\dfrac{\mathbb{V}%
_{n}\left(  t\right)  }{t}=O_{\mathbb{P}}\left(  1\right)  =\sup
\limits_{1/n\leq t\leq1}\dfrac{t}{\mathbb{V}_{n}\left(  t\right)  }.\medskip$%
\end{tabular}
\ \ \ \
\]
For every $0<\eta<1/2,$ we have%
\[
\left(  iii\right)  \text{ }\sup\limits_{0\leq t\leq1}\dfrac{n^{\eta
}\left\vert \mathbb{U}_{n}\left(  t\right)  -t\right\vert }{\left(  t\left(
1-t\right)  \right)  ^{1-\eta}}=O_{\mathbb{P}}\left(  1\right)  =\sup
\limits_{1/n\leq t\leq1-1/n}\dfrac{n^{\eta}\left\vert t-\mathbb{V}_{n}\left(
t\right)  \right\vert }{\left(  t\left(  1-t\right)  \right)  ^{1-\eta}}.
\]

\end{lemma}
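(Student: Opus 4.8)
The plan is to reduce each of $(i)$, $(ii)$, $(iii)$ to an almost surely finite functional of the renewal sequence $S_{j}:=E_{1}+\cdots+E_{j}$, where $E_{1},E_{2},\ldots$ are iid standard exponential rv's, by means of R\'{e}nyi's representation $\left(  U_{1:n},\ldots,U_{n:n}\right)  \overset{\mathcal{D}}{=}\left(  S_{1}/S_{n+1},\ldots,S_{n}/S_{n+1}\right)  $ for the order statistics of the $U_{i}$'s; a quantity converging in distribution to such a functional is then automatically $O_{\mathbb{P}}\left(  1\right)  $. One could equally quote $(i)$--$(iii)$ from the uniform empirical and quantile process inequalities collected in \cite{SW86}, Chapters 10--11; the route below merely makes the part played by $U_{1:n}$ and $U_{n:n}$ explicit.

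For $(i)$, since $\mathbb{U}_{n}\left(  t\right)  =j/n$ on $\left[  U_{j:n},U_{j+1:n}\right)  $ (with $U_{0:n}:=0$, $U_{n+1:n}:=1$), the map $t\mapsto t/\mathbb{U}_{n}\left(  t\right)  $ is nondecreasing and $t\mapsto\mathbb{U}_{n}\left(  t\right)  /t$ nonincreasing on each such interval, so that $\sup_{U_{1:n}\leq t\leq1}t/\mathbb{U}_{n}\left(  t\right)  =\max_{2\leq j\leq n}nU_{j:n}/\left(  j-1\right)  $ and $\sup_{U_{1:n}\leq t\leq1}\mathbb{U}_{n}\left(  t\right)  /t=\max_{1\leq j\leq n}j/\left(  nU_{j:n}\right)  $. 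By R\'{e}nyi's representation together with $S_{n+1}/n\rightarrow1$ a.s., these equal in distribution sequences converging a.s.\ to $\sup_{j\geq2}S_{j}/\left(  j-1\right)  $ and $\sup_{j\geq1}j/S_{j}$, which are finite a.s.\ because $S_{j}/j\rightarrow1$; hence both suprema are $O_{\mathbb{P}}\left(  1\right)  $. Part $(ii)$ is the corresponding statement for the uniform empirical quantile function $\mathbb{V}_{n}$, $\mathbb{V}_{n}\left(  t\right)  :=U_{j:n}$ for $t\in\left(  \left(  j-1\right)  /n,j/n\right]  $: the analogous monotonicity on these intervals, together with $\mathbb{V}_{n}\left(  j/n\right)  =U_{j:n}$, shows that over $1/n\leq t\leq1$ the two suprema in $(ii)$ again reduce to $\max_{j}nU_{j:n}/\left(  j-1\right)  $ and $\max_{j}j/\left(  nU_{j:n}\right)  $, and one concludes as before; the lower cut $t\geq1/n$ is exactly what discards the term $U_{1:n}/t$, which is unbounded as $t\downarrow0$.

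For $(iii)$ I would split $\left[  0,1\right]  $ at $U_{1:n}$ and $U_{n:n}$. On $\left[  0,U_{1:n}\right]  $ one has $\mathbb{U}_{n}\equiv0$, so the ratio equals $n^{\eta}t^{\eta}\left(  1-t\right)  ^{-\left(  1-\eta\right)  }$, which is increasing in $t$ there; hence
\[
\sup_{0\leq t\leq U_{1:n}}\frac{n^{\eta}\left\vert \mathbb{U}_{n}\left(  t\right)  -t\right\vert }{\left(  t\left(  1-t\right)  \right)  ^{1-\eta}}=\frac{\left(  nU_{1:n}\right)  ^{\eta}}{\left(  1-U_{1:n}\right)  ^{1-\eta}}=O_{\mathbb{P}}\left(  1\right)  ,
\]
because $nU_{1:n}$ is tight and $U_{1:n}\overset{\mathbb{P}}{\rightarrow}0$; the interval $\left[  U_{n:n},1\right]  $ is symmetric, via $n\left(  1-U_{n:n}\right)  =O_{\mathbb{P}}\left(  1\right)  $. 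On $\left[  U_{1:n},U_{n:n}\right]  $, $\left\vert \mathbb{U}_{n}\left(  t\right)  -t\right\vert $ is, between consecutive order statistics, dominated by its endpoint values, while $\left(  t\left(  1-t\right)  \right)  ^{1-\eta}$ is there, by $(i)$--$(ii)$, bounded below by an $O_{\mathbb{P}}\left(  1\right)  $ multiple of $\left(  j\left(  n-j\right)  /n^{2}\right)  ^{1-\eta}$; a short computation then bounds the interior supremum by an $O_{\mathbb{P}}\left(  1\right)  $ quantity plus an $O_{\mathbb{P}}\left(  1\right)  $ multiple of $M_{n}:=\max_{1\leq j\leq n}\left\vert nU_{j:n}-j\right\vert /\left(  j\wedge\left(  n-j\right)  \right)  ^{1-\eta}$. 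Writing $nU_{j:n}-j=\left(  \left(  n-j\right)  S_{j}-j\left(  S_{n+1}-S_{j}\right)  \right)  /S_{n+1}$ and using $S_{n+1}/n\rightarrow1$ together with a maximal inequality for the partial sums of $\left(  E_{i}-1\right)  $, one finds that $M_{n}$ is dominated by $\sup_{j\geq1}\left\vert S_{j}-j\right\vert /j^{1-\eta}$ plus a term that is $o_{\mathbb{P}}\left(  1\right)  $ uniformly; since $1/\left(  1-\eta\right)  \in[1,2)$, the Marcinkiewicz--Zygmund strong law yields $\left(  S_{j}-j\right)  /j^{1-\eta}\rightarrow0$ a.s., so $\sup_{j\geq1}\left\vert S_{j}-j\right\vert /j^{1-\eta}<\infty$ a.s.\ and $M_{n}=O_{\mathbb{P}}\left(  1\right)  $. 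The quantile version of $(iii)$ follows from the same computation via $\mathbb{V}_{n}\left(  j/n\right)  =U_{j:n}$, the restriction $t\geq1/n$ being precisely what prevents $\left(  t\left(  1-t\right)  \right)  ^{1-\eta}$ from decaying faster than $\left\vert t-\mathbb{V}_{n}\left(  t\right)  \right\vert $ near $0$ and $1$.

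The one genuinely delicate point is the interior estimate in $(iii)$: the weight $\left(  t\left(  1-t\right)  \right)  ^{1-\eta}$ is stronger than the usual Chibisov--O'Reilly weight $\left(  t\left(  1-t\right)  \right)  ^{1/2-\eta}$, so the crude bound $\left\vert \mathbb{U}_{n}\left(  t\right)  -t\right\vert \leq\mathbb{U}_{n}\left(  t\right)  +t$ is not sharp enough in the bulk, and one must really exploit the $n^{-1/2}$-scale cancellation in $\left\vert S_{j}-j\right\vert $, glued along the order-statistic grid to the endpoint clamping. Everything else is routine bookkeeping.
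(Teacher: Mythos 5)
Your argument is correct in substance, but it takes a genuinely different route from the paper's. The paper does not prove $(i)$, $(ii)$ or the empirical half of $(iii)$ at all: it quotes them from Shorack and Wellner (1986) (pages 415--416, inequality 2, and page 425, assertion 15), and the only thing it actually derives is the quantile half of $(iii)$, obtained by factorizing $n^{\eta}\left\vert t-\mathbb{V}_{n}\left(  t\right)  \right\vert /\left(  t\left(  1-t\right)  \right)  ^{1-\eta}$ into $\left\{  \left(  1-\mathbb{V}_{n}\left(  t\right)  \right)  /\left(  1-t\right)  \right\}  ^{1-\eta}\left\{  \mathbb{V}_{n}\left(  t\right)  /t\right\}  ^{1-\eta}$ times $n^{\eta}\left\vert \mathbb{V}_{n}\left(  t\right)  -\mathbb{U}_{n}\left(  \mathbb{V}_{n}\left(  t\right)  \right)  \right\vert /\left(  \mathbb{V}_{n}\left(  t\right)  \left(  1-\mathbb{V}_{n}\left(  t\right)  \right)  \right)  ^{1-\eta}$, then controlling the first two factors by $(ii)$ and the third by the empirical half. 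Your self-contained proof via R\'{e}nyi's representation and the Marcinkiewicz--Zygmund strong law buys transparency: it shows that the endpoint behaviour is governed by the tightness of $nU_{1:n}$ and $n\left(  1-U_{n:n}\right)  $, and that the weight exponent $1-\eta$ with $\eta<1/2$ is precisely the Marcinkiewicz--Zygmund exponent $1/p$ with $p=1/\left(  1-\eta\right)  <2$; the price is the bookkeeping you rightly flag. Two points to tighten. First, in bounding $M_{n}$ the naive splitting of $nU_{j:n}-j$ into $\left(  n/S_{n+1}\right)  \left(  S_{j}-j\right)  $ minus $\left(  j/S_{n+1}\right)  \left(  S_{n+1}-n\right)  $ is too lossy for $j$ near $n$ (both terms are of order $\sqrt{n}$ and cancel), so for the upper range of $j$ you must exploit the symmetric form $\left(  \left(  n-j\right)  S_{j}-j\left(  S_{n+1}-S_{j}\right)  \right)  /S_{n+1}$ you wrote down, together with the fact that the reversed partial sums $S_{n+1}-S_{j}$ have the same law as forward ones; that contribution is $O_{\mathbb{P}}\left(  1\right)  $, not $o_{\mathbb{P}}\left(  1\right)  $ as stated. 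Second, the lower bound on the weight over $\left[  U_{j:n},U_{j+1:n}\right)  $ needs, besides $(i)$ for $t$, the mirror-image of $(i)$ for $1-t$ applied to the sample $1-U_{i}$; this is immediate by symmetry but should be said.
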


\begin{proof}
The proofs of the first two assertions may be found in \cite{SW86} (pages 415
and 416, inequality 2). The left result of assertion $\left(  iii\right)  $ is
also\ given in \cite{SW86} (page 425, assertion 15). For the right one, we
have%
\begin{align*}
&  \sup\limits_{1/n\leq t\leq1-1/n}\dfrac{n^{\eta}\left\vert t-\mathbb{V}%
_{n}\left(  t\right)  \right\vert }{\left(  t\left(  1-t\right)  \right)
^{1-\eta}}\\
&  =\sup\limits_{1/n\leq t\leq1-1/n}\left\{  \frac{1-\mathbb{V}_{n}\left(
t\right)  }{1-t}\right\}  ^{1-\eta}\left\{  \frac{\mathbb{V}_{n}\left(
t\right)  }{t}\right\}  ^{1-\eta}\frac{n^{\eta}\left\vert \mathbb{V}%
_{n}\left(  t\right)  -\mathbb{U}_{n}\left(  \mathbb{V}_{n}\left(  t\right)
\right)  \right\vert }{\left(  \mathbb{V}_{n}\left(  t\right)  \left(
1-\mathbb{V}_{n}\left(  t\right)  \right)  \right)  ^{1-\eta}},
\end{align*}
which is less than or equal to%
\begin{align*}
&  \sup\limits_{1/n\leq t\leq1-1/n}\left\{  \frac{1-\mathbb{V}_{n}\left(
t\right)  }{1-t}\right\}  ^{1-\eta}\sup\limits_{1/n\leq t\leq1-1/n}\left\{
\frac{\left(  \mathbb{V}_{n}\left(  t\right)  \right)  }{t}\right\}  ^{1-\eta
}\\
&
\ \ \ \ \ \ \ \ \ \ \ \ \ \ \ \ \ \ \ \ \ \ \ \ \ \ \ \ \ \ \ \ \ \ \ \times
\sup\limits_{1/n\leq t\leq1-1/n}\frac{n^{\eta}\left\vert \mathbb{V}_{n}\left(
t\right)  -\mathbb{U}_{n}\left(  \mathbb{V}_{n}\left(  t\right)  \right)
\right\vert }{\left(  \mathbb{V}_{n}\left(  t\right)  \left(  1-\mathbb{V}%
_{n}\left(  t\right)  \right)  \right)  ^{1-\eta}}.
\end{align*}
Note that $\left\{  1-\mathbb{V}_{n}\left(  t\right)  ;\text{ }0\leq
t\leq1\right\}  \overset{\mathcal{D}}{=}\left\{  \mathbb{V}_{n}\left(
1-t\right)  ;\text{ }0\leq t\leq1\right\}  ,$ which may be rewritten into
$\left\{  \mathbb{V}_{n}\left(  s\right)  ;\text{ }0\leq s\leq1\right\}  .$
Hence, without loss of generality, we may write%
\[
\sup\limits_{1/n\leq t\leq1-1/n}\left\{  \frac{1-\mathbb{V}_{n}\left(
t\right)  }{1-t}\right\}  ^{1-\eta}=\sup\limits_{1/n\leq s\leq1-1/n}\left\{
\frac{\mathbb{V}_{n}\left(  s\right)  }{s}\right\}  ^{1-\eta},
\]
which, from $\left(  ii\right)  ,$ equals $O_{\mathbb{P}}\left(  1\right)  .$
Observe now that $1/n\leq t\leq1-1/n$ is equivalent to $U_{1:n}\leq
\mathbb{V}_{n}\left(  t\right)  \leq U_{n:n},$ it follows that%
\[
\sup\limits_{1/n\leq t\leq1-1/n}\frac{n^{\eta}\left\vert \mathbb{V}_{n}\left(
t\right)  -\mathbb{U}_{n}\left(  \mathbb{V}_{n}\left(  t\right)  \right)
\right\vert }{\left(  \mathbb{V}_{n}\left(  t\right)  \left(  1-\mathbb{V}%
_{n}\left(  t\right)  \right)  \right)  ^{1-\eta}}\leq\sup\limits_{0\leq
s\leq1}\dfrac{n^{\eta}\left\vert \mathbb{U}_{n}\left(  s\right)  -s\right\vert
}{\left(  s\left(  1-s\right)  \right)  ^{1-\eta}}.
\]
Finally, using the left result of $\left(  iii\right)  $ completes the proof.
\end{proof}

\begin{lemma}
\label{Lemma-6}For every $0\leq\tau<1/8,$ we have $R_{ni}\left(  x\right)
=o_{\mathbb{P}}\left(  x^{-\tau/\gamma}\right)  ,$ for $i=1,2,3.$
\end{lemma}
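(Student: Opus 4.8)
The plan is to treat the three remainders separately, but to first peel off their common prefactor: each has the form $O_{\mathbb{P}}(k^{-1})\,\frac{n}{\sqrt{k}}\,\mathcal{I}_{ni}(x)$, so it suffices to prove that each integral $\mathcal{I}_{ni}(x)$ is $O_{\mathbb{P}}(1)\,\frac{k}{n}\,x^{-\tau/\gamma}$ uniformly in $x\geq1$; multiplying by the prefactor then produces $O_{\mathbb{P}}(k^{-1/2})\,x^{-\tau/\gamma}=o_{\mathbb{P}}(x^{-\tau/\gamma})$ since $k\to\infty$. The tools I would lean on are: $Z_{n-k:n}\to\infty$ almost surely together with $\frac{n}{k}\overline{H}(Z_{n-k:n})\overset{\mathbb{P}}{\rightarrow}1$, which via Proposition \ref{Potter} applied to $\overline{H}$ yields $\overline{H}(xZ_{n-k:n})\leq O_{\mathbb{P}}(1)\,\frac{k}{n}\,x^{-1/\gamma+\epsilon}$ uniformly in $x\geq1$; the first-order relation $\overline{H}^{(1)}(w)/\overline{H}(w)\rightarrow p$ from Lemma 4.1 of \cite{BMN-2015}, so the population ratio $\overline{H}/\overline{H}^{(1)}$ is bounded on $[xZ_{n-k:n},\infty)$; and Lemma \ref{Lemma-3}, which supplies $|\beta_{n}^{\ast}(u)|\leq O_{\mathbb{P}}(1)\,u^{-(1/2-\eta)/\gamma}$. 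Throughout, the Potter exponent $\epsilon>0$ and the weight exponent $0<\eta<1/2$ are fixed small enough, relative to the fixed $\tau<1/8$, that every exponent produced is at most $-\tau/\gamma$ on $x\geq1$.

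For $R_{n3}$ this is immediate: bound $\overline{H}(w)/\overline{H}^{(1)}(w)\leq(1+\epsilon)p^{-1}$ on $[xZ_{n-k:n},\infty)$ (legitimate because $Z_{n-k:n}\to\infty$), so $\mathcal{I}_{n3}(x)\leq(1+\epsilon)p^{-1}\overline{H}(xZ_{n-k:n})$, which is of the required form by the Potter estimate above. The $(\overline{H}/\overline{H}^{(1)})^{2}$ part of $R_{n1}$ is the same computation with $dH_{n}$ in place of $dH$: one uses $\int_{xZ_{n-k:n}}^{\infty}dH_{n}(w)=\overline{H}_{n}(xZ_{n-k:n})=\overline{H}(xZ_{n-k:n})+\frac{\sqrt{k}}{n}\beta_{n}^{\ast}(x)$, whose two summands are $O_{\mathbb{P}}(1)\,\frac{k}{n}\,x^{-\tau/\gamma}$ by Potter and by Lemma \ref{Lemma-3} respectively. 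For $R_{n2}$ I would write $d(H_{n}-H)=-d(\overline{H}_{n}-\overline{H})$, change variables $w=Z_{n-k:n}u$ so that $(\overline{H}_{n}-\overline{H})(Z_{n-k:n}u)=\frac{\sqrt{k}}{n}\beta_{n}^{\ast}(u)$, and integrate by parts; the boundary term is $\frac{\sqrt{k}}{n}\,\psi(xZ_{n-k:n})^{2}\,\beta_{n}^{\ast}(x)$ with $\psi:=\overline{H}/\overline{H}^{(1)}\rightarrow p^{-1}$, and the leftover integral $\frac{\sqrt{k}}{n}\int_{x}^{\infty}\beta_{n}^{\ast}(u)\,d\{\psi(Z_{n-k:n}u)^{2}\}$ is handled exactly as in the proof of Lemma \ref{Lemma-4}, via Lemma \ref{Lemma-3} and Proposition \ref{Potter}. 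Both pieces turn out to be $O_{\mathbb{P}}(1)\,\frac{\sqrt{k}}{n}\,x^{-\tau/\gamma}$, comfortably of the required order.

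The one genuinely delicate term is the $(\overline{H}_{n}/\overline{H}_{n}^{(1)})^{2}$ part of $R_{n1}$, and I expect it to be the main obstacle, because $\overline{H}_{n}^{(1)}$ occupies the denominator and can be arbitrarily small — it even degenerates at the very largest observations when those happen to be censored. The plan is to reduce it to the case just treated by factoring $(\overline{H}_{n}(w)/\overline{H}_{n}^{(1)}(w))^{2}=(\overline{H}_{n}(w)/\overline{H}(w))^{2}\,(\overline{H}(w)/\overline{H}^{(1)}(w))^{2}\,(\overline{H}^{(1)}(w)/\overline{H}_{n}^{(1)}(w))^{2}$ and bounding the first and third ratios on a high-probability event. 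Through the representations \eqref{HN1}, \eqref{rep-H1} and \eqref{rep-H}, these ratios are increments of the uniform empirical distribution function divided by their expectations, so Lemma \ref{lemma-5}(i) together with the increment approximation \eqref{approx2} gives that, with probability tending to one, both $\overline{H}_{n}(w)/\overline{H}(w)$ and $\overline{H}^{(1)}(w)/\overline{H}_{n}^{(1)}(w)$ are $O_{\mathbb{P}}(1)$ uniformly over $Z_{n-k:n}\leq w\leq Z_{n:n}$, the effective support of $dH_{n}$ beyond $xZ_{n-k:n}$, with the finitely many top order statistics where $\overline{H}_{n}^{(1)}$ degenerates contributing, once the $k^{-1}$ regularisation is accounted for, only a negligible amount. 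Since the middle factor is bounded as before, $(\overline{H}_{n}/\overline{H}_{n}^{(1)})^{2}\leq O_{\mathbb{P}}(1)$ uniformly on that event, and this integral is then dominated by $O_{\mathbb{P}}(1)\,\overline{H}_{n}(xZ_{n-k:n})$, which has already been shown to be $O_{\mathbb{P}}(1)\,\frac{k}{n}\,x^{-\tau/\gamma}$. Securing the uniform upper bound on $\overline{H}^{(1)}/\overline{H}_{n}^{(1)}$ — rather than merely on $\overline{H}/\overline{H}_{n}$ — and controlling the degeneracy at the top of the sample is where the real work lies; the remainder is bookkeeping of Potter's inequalities, Lemma \ref{Lemma-3}, and the prefactor $O_{\mathbb{P}}(k^{-1})\,\frac{n}{\sqrt{k}}$.
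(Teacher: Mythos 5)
Your proposal matches the paper's proof in all essentials: the same peeling of the prefactor $O_{\mathbb{P}}(k^{-1})\,n/\sqrt{k}$, the same uniform $O_{\mathbb{P}}(1)$ control of $\overline{H}_{n}/\overline{H}$ and $\overline{H}_{n}^{(1)}/\overline{H}^{(1)}$ (and their reciprocals) via the representations and Lemma \ref{lemma-5}$(i)$, then Potter's inequality applied to $\overline{H}$ together with $\overline{H}/\overline{H}^{(1)}\to p^{-1}$, and for $R_{n2}$ an integration by parts controlling $\overline{H}_{n}-\overline{H}$ (the paper uses Lemma \ref{lemma-5}$(iii)$ where you invoke $\beta_{n}^{\ast}$ and Lemma \ref{Lemma-3}, an equivalent device). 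The step you single out as the real work --- the uniform lower bound on $\overline{H}_{n}^{(1)}/\overline{H}^{(1)}$ up to the top order statistics --- is precisely the point the paper disposes of in one line by citing Lemma \ref{lemma-5}$(i)$, so your plan is no less complete than the published argument.
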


\begin{proof}
By using representation $\left(  \ref{rep-H}\right)  $ and $\left(
\ref{HN1}\right)  $ with assertion $\left(  i\right)  $ of Lemma
\ref{lemma-5}, we get $\overline{H}_{n}\left(  w\right)  /\overline{H}\left(
w\right)  =O_{\mathbb{P}}\left(  1\right)  =\overline{H}_{n}^{\left(
1\right)  }\left(  w\right)  /\overline{H}^{\left(  1\right)  }\left(
w\right)  ,$ uniformly over $Z_{1:n}\leq w\leq Z_{n:n}.$ Since $H_{n}\left(
w\right)  =1,$ for $w\geq Z_{n:n},$ it follows that for any $x\geq1,$%
\[
R_{n1}\left(  x\right)  =O_{\mathbb{P}}\left(  k^{-1}\right)  \frac{n}%
{\sqrt{k}}\int_{xZ_{n-k:n}}^{\infty}\left(  \frac{\overline{H}\left(
w\right)  }{\overline{H}^{\left(  1\right)  }\left(  w\right)  }\right)
^{2}dH\left(  w\right)  .
\]
As we did in the proof of Lemma \ref{lemma-5}, let us consider the set
$\mathcal{A}_{n}:=\left\{  \left\vert Z_{n-k:n}/h-1\right\vert <\epsilon
\right\}  ,$ in which the previous equation becomes
\[
R_{n1}\left(  x\right)  =O_{\mathbb{P}}\left(  k^{-1}\right)  \frac{n}%
{\sqrt{k}}\int_{\left(  1-\epsilon\right)  xh}^{\infty}\left(  \frac
{\overline{H}\left(  w\right)  }{\overline{H}^{\left(  1\right)  }\left(
w\right)  }\right)  ^{2}dH\left(  w\right)  .
\]
Let us rewrite this latter into%
\[
R_{n1}\left(  x\right)  =O_{\mathbb{P}}\left(  k^{-1/2}\right)  \int_{\left(
1-\epsilon\right)  x}^{\infty}\left(  \frac{\overline{H}\left(  wh\right)
}{\overline{H}^{\left(  1\right)  }\left(  wh\right)  }\right)  ^{2}%
d\frac{\overline{H}\left(  wh\right)  }{\overline{H}\left(  h\right)  }.
\]
From inequality $\left(  \ref{inequa-p}\right)  ,$ we have $\overline
{H}\left(  wh\right)  /\overline{H}^{\left(  1\right)  }\left(  wh\right)
=\left(  1+o\left(  1\right)  \right)  p^{-1},$ as $n\rightarrow\infty,$
uniformly on $w>x.$ Then by using Proposition \ref{Potter} (for $\overline
{H}),$ we obtain $R_{n1}\left(  x\right)  =O_{\mathbb{P}}\left(
k^{-1/2}\right)  x^{-1/\gamma},$ which equals $o_{\mathbb{P}}\left(
x^{-\tau/\gamma}\right)  $ as $n\rightarrow\infty,$ uniformly on $x\geq1.$ For
the second term, in set $\mathcal{A}_{n},$ we write%
\[
R_{n2}\left(  x\right)  =O_{\mathbb{P}}\left(  k^{-1}\right)  \frac{n}%
{\sqrt{k}}\int_{\left(  1-\epsilon\right)  xh}^{\infty}\left(  \frac
{\overline{H}\left(  w\right)  }{\overline{H}^{\left(  1\right)  }\left(
w\right)  }\right)  ^{2}d\left(  \overline{H}_{n}\left(  w\right)
-\overline{H}\left(  w\right)  \right)  .
\]
For convenience let us set $h_{\epsilon}:=\left(  1-\epsilon\right)  h.$ By
using a change of variables and integration by parts, the latter integral
becomes%
\[
-\left(  \frac{\overline{H}\left(  h_{\epsilon}\right)  }{\overline
{H}^{\left(  1\right)  }\left(  h_{\epsilon}\right)  }\right)  ^{2}\left(
\overline{H}_{n}\left(  h_{\epsilon}\right)  -\overline{H}\left(  h_{\epsilon
}\right)  \right)  -\int_{x}^{\infty}\left(  \overline{H}_{n}\left(
h_{\epsilon}w\right)  -\overline{H}\left(  h_{\epsilon}w\right)  \right)
d\left(  \frac{\overline{H}\left(  h_{\epsilon}w\right)  }{\overline
{H}^{\left(  1\right)  }\left(  h_{\epsilon}w\right)  }\right)  ^{2}.
\]
Let us fix $0<\eta<1/2.$ The representation $\left(  \ref{rep-H}\right)  $ and
assertion $\left(  iii\right)  $ of of Lemma \ref{lemma-5} together imply
that, uniformly over $w\geq x,$ we have $\overline{H}_{n}\left(  h_{\epsilon
}w\right)  -\overline{H}\left(  h_{\epsilon}w\right)  =O_{\mathbb{P}}\left(
n^{-\eta}\right)  \left(  \overline{H}\left(  h_{\epsilon}w\right)  \right)
^{1-\eta}.$ By using this latter with inequality $\left(  \ref{inequa-p}%
\right)  $ and Proposition \ref{Potter} (to together $\overline{H}),$ we end
up with $R_{n2}\left(  x\right)  =O_{\mathbb{P}}\left(  k^{-\eta}\right)
x^{\left(  \eta-1\right)  /\gamma}=o_{\mathbb{P}}\left(  1\right)
x^{-\tau/\gamma}.$ For the third term
\[
R_{n3}\left(  x\right)  =O_{\mathbb{P}}\left(  k^{-1}\right)  \frac{n}%
{\sqrt{k}}\int_{xZ_{n-k:n}}^{\infty}\frac{\overline{H}\left(  w\right)
}{\overline{H}^{\left(  1\right)  }\left(  w\right)  }dH\left(  w\right)  ,
\]
we use similar arguments to show that equals $O_{\mathbb{P}}\left(
k^{-1/2}\right)  x^{-1/\gamma}=o_{\mathbb{P}}\left(  1\right)  x^{-\tau
/\gamma},$ which finishes the proof of the lemma.
\end{proof}

\begin{lemma}
\label{Lemma-7}For every $0<\eta<1/2,$ we have $\sup_{k^{-1}\leq s\leq
1}s^{\eta+\gamma}\left\vert \ell_{n}\left(  s\right)  -s^{-\gamma}\right\vert
=o_{\mathbb{P}}\left(  1\right)  .$
\end{lemma}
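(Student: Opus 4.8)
The plan is to pass to the quantile side via the quantile transform. Jointly in $j$, $Z_{n-j:n}\overset{\mathcal{D}}{=}U_{H}(1/(1-U_{n-j:n}))$ and $1-U_{n-j:n}=\mathbb{W}_{n}(j/n)$, where $\mathbb{W}_{n}(u):=1-\mathbb{V}_{n}(1-u)$ is itself a uniform empirical quantile function, so $\{\mathbb{W}_{n}(u);0\le u\le1\}\overset{\mathcal{D}}{=}\{\mathbb{V}_{n}(u);0\le u\le1\}$ and Lemma~\ref{lemma-5} applies verbatim to $\mathbb{W}_{n}$. Hence, by $\left(\ref{ln(s)}\right)$, $\ell_{n}(s)\overset{\mathcal{D}}{=}U_{H}(1/\mathbb{W}_{n}(ks/n))/U_{H}(1/\mathbb{W}_{n}(k/n))$ for $k^{-1}\le s\le1$. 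Put $g_{n}(u):=(n/k)\mathbb{W}_{n}((k/n)u)$, $t_{n}:=n/(k\,g_{n}(1))$, $\rho_{n}(s):=g_{n}(1)/g_{n}(s)$, $\Phi_{n}(s):=(s\,g_{n}(1)/g_{n}(s))^{\gamma}$ and $\Psi_{n}(s):=U_{H}(t_{n}\rho_{n}(s))/(U_{H}(t_{n})\rho_{n}(s)^{\gamma})$; then the right-hand side above equals $U_{H}(t_{n}\rho_{n}(s))/U_{H}(t_{n})$, so $s^{\gamma}\ell_{n}(s)\overset{\mathcal{D}}{=}\Phi_{n}(s)\Psi_{n}(s)$. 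Since the lemma is equivalent to $\sup_{k^{-1}\le s\le1}s^{\eta}|s^{\gamma}\ell_{n}(s)-1|=o_{\mathbb{P}}(1)$, it suffices to show $\sup_{k^{-1}\le s\le1}s^{\eta}|\Phi_{n}(s)-1|=o_{\mathbb{P}}(1)$ and $\sup_{k^{-1}\le s\le1}s^{\eta}|\Psi_{n}(s)-1|=o_{\mathbb{P}}(1)$ with $\Phi_{n}$ uniformly bounded in probability, and to combine them through $s^{\eta}|\Phi_{n}\Psi_{n}-1|\le\Phi_{n}\,s^{\eta}|\Psi_{n}-1|+s^{\eta}|\Phi_{n}-1|$.

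The two inputs are both instances of Lemma~\ref{lemma-5} for $\mathbb{W}_{n}$. First, part (ii) gives the two-sided comparison: with $M_{n}:=\max(\sup_{1/n\le u\le1}\mathbb{W}_{n}(u)/u,\ \sup_{1/n\le u\le1}u/\mathbb{W}_{n}(u))=O_{\mathbb{P}}(1)$, one has $g_{n}(1)\in[M_{n}^{-1},M_{n}]$, $s/g_{n}(s)\in[M_{n}^{-1},M_{n}]$, $\rho_{n}(s)\in[M_{n}^{-2}/s,\,M_{n}^{2}/s]$ and $t_{n}\wedge(t_{n}\rho_{n}(s))\ge n/(kM_{n})\to\infty$ uniformly over $s\in[k^{-1},1]$. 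Second, part (iii) taken at $t=ks/n\in[1/n,k/n]$ (where $1-t\ge1/2$ for large $n$) gives the weighted rate
\[
|g_{n}(s)-s|=\frac{n}{k}\,|\mathbb{W}_{n}((k/n)s)-(k/n)s|=O_{\mathbb{P}}(k^{-\eta})\,s^{1-\eta}\qquad\text{uniformly over }s\in[k^{-1},1],
\]
and in particular $g_{n}(1)=1+O_{\mathbb{P}}(k^{-\eta})$.

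These suffice. For $\Phi_{n}$, its argument $s\,g_{n}(1)/g_{n}(s)$ lies in $[M_{n}^{-2},M_{n}^{2}]$, an $O_{\mathbb{P}}(1)$ compact subset of $(0,\infty)$; a mean value theorem for $u\mapsto u^{\gamma}$ there, with $|s\,g_{n}(1)/g_{n}(s)-1|\le(s/g_{n}(s))|g_{n}(1)-1|+(1/g_{n}(s))|g_{n}(s)-s|$ and the bounds above, gives $\sup_{s}s^{\eta}|\Phi_{n}(s)-1|=O_{\mathbb{P}}(1)\cdot O_{\mathbb{P}}(k^{-\eta})=o_{\mathbb{P}}(1)$, while $\Phi_{n}(s)\le M_{n}^{2\gamma}=O_{\mathbb{P}}(1)$. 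For $\Psi_{n}$, fix $\varepsilon\in(0,\eta)$ and choose $M$ with $\limsup_{n}\mathbb{P}(M_{n}>M)<\varepsilon$; on $\{M_{n}\le M\}$ the base points $t_{n}$ and $t_{n}\rho_{n}(s)$ exceed the Potter threshold of parameter $\varepsilon$ for all $n$ large (uniformly in $s$), so Proposition~\ref{Potter} applied to $U_{H}\in\mathcal{RV}_{(\gamma)}$ — to $U_{H}(t_{n}\rho_{n}(s))/U_{H}(t_{n})$ when $\rho_{n}(s)\ge1$, and to $U_{H}(t_{n})/U_{H}(t_{n}\rho_{n}(s))$ (argument $\rho_{n}(s)^{-1}\ge1$), then inverted, when $\rho_{n}(s)<1$ — yields $|\Psi_{n}(s)-1|\le2\varepsilon\max(\rho_{n}(s)^{\varepsilon},\rho_{n}(s)^{-\varepsilon})$, hence (using $\rho_{n}(s)\le M^{2}/s$, $\rho_{n}(s)\ge M^{-2}/s$ and $s\le1$) $s^{\eta}|\Psi_{n}(s)-1|\le2\varepsilon M^{2\varepsilon}$ uniformly in $s$; an elementary $\varepsilon$-argument (with $M=M(\varepsilon)$, for which $\varepsilon\log M(\varepsilon)\to0$) then gives $\sup_{s}s^{\eta}|\Psi_{n}(s)-1|=o_{\mathbb{P}}(1)$, and the three facts combine as indicated.

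The main obstacle is the uniformity near the left endpoint $s\approx k^{-1}$: the weighted estimate for $|g_{n}(s)-s|$ must be valid down to $s=k^{-1}$, which is exactly the lower edge of Lemma~\ref{lemma-5}(iii), and Proposition~\ref{Potter} must be applied there, where the argument $\rho_{n}(s)$ fed to $U_{H}$ can be as large as a constant times $k$. This is harmless only because such a large $\rho_{n}(s)$ always occurs multiplied by the weight $s^{\eta}\ge k^{-\eta}$, which absorbs the growth $\rho_{n}(s)^{\varepsilon}\le(M^{2}k)^{\varepsilon}$ once $\varepsilon<\eta$; and it is precisely the two-sided comparison $M_{n}^{-1}u\le\mathbb{W}_{n}(u)\le M_{n}u$ of Lemma~\ref{lemma-5}(ii), not any convergence rate, that makes $t_{n}$ and $t_{n}\rho_{n}(s)$ tend to infinity uniformly in $s$ and keeps $\rho_{n}(s)$ comparable to $1/s$, so that the one-sided Proposition~\ref{Potter} suffices after a routine inversion.
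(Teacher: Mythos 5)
Your proof is correct, and it reaches the conclusion by a genuinely different decomposition from the paper's. The paper works additively: after the same quantile-transform step ($Q_{n}(1-ks/n)\overset{\mathcal{D}}{=}Q(1-\mathbb{V}_{n}(ks/n))$, $Z_{n-k:n}\overset{\mathcal{D}}{=}Q(1-U_{k+1:n})$), it telescopes $\ell_{n}(s)-s^{-\gamma}$ into three terms $L_{1,n}+L_{2,n}+L_{3,n}$ — a Potter error for $Q(1-\cdot)\in\mathcal{RV}_{(-\gamma)}$, the discrepancy between $U_{k+1:n}$ and $k/n$, and the empirical fluctuation $\mathbb{V}_{n}(ks/n)-ks/n$ handled by the mean value theorem on $u\mapsto u^{-\gamma}$ — bounding them by $o_{\mathbb{P}}(s^{-\gamma+\epsilon})$, $o_{\mathbb{P}}(s^{-\gamma})$ and $o_{\mathbb{P}}(s^{-\gamma-\eta})$ respectively via Lemma \ref{lemma-5}(ii)--(iii) and Proposition \ref{Potter}. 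You instead factor multiplicatively, $s^{\gamma}\ell_{n}(s)\overset{\mathcal{D}}{=}\Phi_{n}(s)\Psi_{n}(s)$, isolating the empirical fluctuation in $\Phi_{n}$ and the regular-variation error in $\Psi_{n}$; the inputs are the very same two lemmas, so the proofs are cousins, but your version buys two things: it absorbs the paper's $L_{2,n}$ into the definition of $t_{n}$ (no separate $U_{k+1:n}$ vs.\ $k/n$ comparison), and it treats the Potter step more carefully than the paper does, since Proposition \ref{Potter} is stated only for arguments $\ge1$ and your explicit inversion for $\rho_{n}(s)<1$ (together with the two-sided bound $\rho_{n}(s)\ge M^{-2}/s$) closes a gap the paper glosses over. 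The only blemish is the quantifier order in the final $\varepsilon$--$M$ argument for $\Psi_{n}$: as written you fix $\varepsilon$ before $M$, whereas one should fix the tolerance, choose $M$ from tightness of $M_{n}$, and only then shrink $\varepsilon$ so that $2\varepsilon M^{2\varepsilon}$ is small; your closing parenthetical shows you are aware of this, and the fix is routine.
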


\begin{proof}
Recall that in $\left(  \ref{ln(s)}\right)  ,$ we set $\ell_{n}\left(
s\right)  =Q_{n}\left(  1-ks/n\right)  /Z_{n-k:n},$ $0<s\leq1.$ Observe that
$\ell_{n}\left(  s\right)  -s^{-\gamma}$ may be rewritten into the sum of%
\[
L_{1,n}\left(  s\right)  :=\ell_{n}\left(  s\right)  -\left(  \frac
{\mathbb{V}_{n}\left(  ks/n\right)  }{U_{k+1:n}}\right)  ^{-\gamma},\text{
}L_{2,n}\left(  s\right)  :=\left(  \frac{\mathbb{V}_{n}\left(  ks/n\right)
}{U_{k+1:n}}\right)  ^{-\gamma}-\left(  \frac{\mathbb{V}_{n}\left(
ks/n\right)  }{k/n}\right)  ^{-\gamma},
\]
and%
\[
L_{3,n}\left(  s\right)  :=\left(  \frac{\mathbb{V}_{n}\left(  ks/n\right)
}{k/n}\right)  ^{-\gamma}-s^{-\gamma}.
\]
First note that, for each $n\geq1,$ we have $Z_{n-k:n}\overset{\mathcal{D}}%
{=}Q\left(  1-U_{k+1:n}\right)  $ and
\[
\left\{  Q_{n}\left(  1-ks/n\right)  ,\text{ }0<s<1\right\}  \overset
{\mathcal{D}}{=}\left\{  Q\left(  1-\mathbb{V}_{n}\left(  ks/n\right)
\right)  ,\text{ }0<s<1\right\}  ,
\]
then, without loss of generality, we may assume that
\[
\ell_{n}\left(  s\right)  =\frac{Q\left(  1-\mathbb{V}_{n}\left(  ks/n\right)
\right)  }{Q\left(  1-U_{k+1:n}\right)  },\text{ }0<s<1.
\]
On the other hand, we have $Q\left(  1-\cdot\right)  \in\mathcal{RV}_{\left(
-\gamma\right)  }$ and $\left(  k/n\right)  /U_{k+1:n}\overset{\mathbb{P}%
}{\rightarrow}1.$ By making use of Proposition $\ref{Potter}$ with $f\left(
s\right)  =Q\left(  1-s\right)  $ and $t=\left(  k/n\right)  /U_{k+1:n},$ we
infer that, for any small $\epsilon>0,$ we have $L_{1,n}\left(  s\right)
=o_{\mathbb{P}}\left(  1\right)  \left(  \mathbb{V}_{n}\left(  ks/n\right)
/U_{k+1:n}\right)  ^{-\gamma+\epsilon},$ uniformly over $k^{-1}\leq s<1,$ in
other term $L_{1,n}\left(  s\right)  =o_{\mathbb{P}}\left(  1\right)  \left(
\mathbb{V}_{n}\left(  ks/n\right)  /\left(  k/n\right)  \right)
^{-\gamma+\epsilon}.$ By using assertion $\left(  ii\right)  $ of Lemma
\ref{lemma-5}, we end up with
\begin{equation}
L_{1,n}\left(  s\right)  =o_{\mathbb{P}}\left(  s^{-\gamma+\epsilon}\right)
,\text{ as }n\rightarrow\infty.\label{L1}%
\end{equation}
The second term may be rewritten into%
\[
L_{2,n}\left(  s\right)  =\left(  \frac{\mathbb{V}_{n}\left(  ks/n\right)
}{k/n}\right)  ^{-\gamma}\left\{  \left(  \frac{k/n}{U_{k+1:n}}\right)
^{-\gamma}-1\right\}  .
\]
Since $nU_{k+1:n}/k\overset{\mathbb{P}}{\rightarrow}1,$ then uniformly over
$k^{-1}\leq s<1,$ we have%
\begin{equation}
L_{2,n}\left(  s\right)  =o_{\mathbb{P}}\left(  s^{-\gamma}\right)  ,\text{ as
}n\rightarrow\infty.\label{L2}%
\end{equation}
Observe now that $L_{3,n}\left(  s\right)  =\left(  k/n\right)  ^{\gamma
}\left(  \mathbb{V}_{n}\left(  ks/n\right)  ^{-\gamma}-\left(  sk/n\right)
^{-\gamma}\right)  ,$ which by using the mean value theorem equals
$-\gamma\left(  k/n\right)  ^{\gamma}\left\{  \mathbb{V}_{n}^{\ast}\left(
ks/n\right)  \right\}  ^{-\gamma-1}\left(  \mathbb{V}_{n}\left(  ks/n\right)
-sk/n\right)  ,$ where $\mathbb{V}_{n}^{\ast}\left(  ks/n\right)  $ is between
$\mathbb{V}_{n}\left(  ks/n\right)  $ and $sk/n.$ Once again by using
assertion $\left(  ii\right)  $ of Lemma \ref{lemma-5}, we show that
$\mathbb{V}_{n}^{\ast}\left(  ks/n\right)  /\left(  ks/n\right)
=O_{\mathbb{P}}\left(  1\right)  $ uniformly over $k^{-1}\leq s<1,$ therefore
\[
L_{3,n}\left(  s\right)  =O_{\mathbb{P}}\left(  1\right)  \left(  n/k\right)
s^{-\gamma-1}\left\vert \mathbb{V}_{n}\left(  ks/n\right)  -sk/n\right\vert .
\]
It is clear that $L_{3,n}\left(  s\right)  =s^{-\gamma-\eta}O_{\mathbb{P}%
}\left(  k^{-\eta}\right)  n^{\eta}\left\vert \mathbb{V}_{n}\left(
ks/n\right)  -sk/n\right\vert /\left(  sk/n\right)  ^{1-\eta}.$ Making use of
assertion $\left(  iii\right)  $ of Lemma \ref{lemma-5}, we get
\begin{equation}
L_{3,n}\left(  s\right)  =o_{\mathbb{P}}\left(  s^{-\gamma-\eta}\right)
,\text{ as }n\rightarrow\infty.\label{L3}%
\end{equation}
By combining $\left(  \ref{L1}\right)  ,$ $\left(  \ref{L2}\right)  $ and
$\left(  \ref{L3}\right)  $ we show that $\sum_{i=1}^{3}L_{i,n}\left(
s\right)  =o_{\mathbb{P}}\left(  s^{-\gamma-\eta}\right)  $ uniformly over
$k^{-1}\leq s<1,$ as sought.
\end{proof}

\begin{lemma}
\label{Lemma-8}For every $0\leq\tau<1/8,$ we have $R_{ni}\left(  x\right)
=o_{\mathbb{P}}\left(  x^{-\tau/\gamma}\right)  ,$ for $i=4,5.$
\end{lemma}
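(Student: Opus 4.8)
Both remainders will be dispatched by the template already used in Lemma~\ref{Lemma-6}. Fix a small $\epsilon>0$ and an $\eta\in(0,1/2)$, both to be taken small; throughout I work on the high-probability event $\mathcal{A}_{n}=\{|Z_{n-k:n}/h-1|<\epsilon\}$ and use: \emph{(i)} Lemma~\ref{Lemma-3}, to replace $|\beta_{n}^{\ast}(u)|$ and $|\beta_{n}(u)|$ by $O_{\mathbb{P}}(1)\,u^{-(1/2-\eta)/\gamma}$; \emph{(ii)} Proposition~\ref{Potter} applied to $\overline{H},\overline{H}^{(1)}\in\mathcal{RV}_{(-1/\gamma)}$, together with $\tfrac nk\overline{H}(Z_{n-k:n})\to1$ and $\tfrac nk\overline{H}^{(1)}(Z_{n-k:n})\to p$ (the latter because $Z_{n-k:n}/h\to1$ and $\overline{H}^{(1)}(h)/\overline{H}(h)\to p$, Lemma~4.1 of \cite{BMN-2015}), to sandwich $\overline{H}^{(1)}(Z_{n-k:n}u)$ between constant multiples of $\tfrac kn\,u^{-1/\gamma\pm\epsilon}$; \emph{(iii)} an integration by parts against $\tfrac nk H_{n}(Z_{n-k:n}\cdot)$, whose tail is $\le(1+\epsilon)u^{-1/\gamma+\epsilon}$ on $\mathcal{A}_{n}$ by (\ref{H-inequa}) and whose mass is carried by the finite range $[1,Z_{n:n}/Z_{n-k:n}]$, with $Z_{n:n}/Z_{n-k:n}=O_{\mathbb{P}}(k^{\gamma+\epsilon})$ (one more application of Proposition~\ref{Potter}, to $U_{H}$, since $\overline{H}(Z_{n:n})\asymp n^{-1}$). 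The powers of $k$ produced along the way are of the form $k^{\tau+c\eta+c'\epsilon-1/2}$, which tends to $0$ once $\eta,\epsilon$ are small, because $\tau<1/8<1/2$.

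For $\mathbf{R}_{n5}$ I would write the bracket as $\dfrac{1}{\overline{H}^{(1)}(Z_{n-k:n}u)}-\dfrac{n}{k}\dfrac{u^{1/\gamma}}{p}=\dfrac{n}{k}\dfrac{u^{1/\gamma}}{p}\Big(\dfrac{p}{(n/k)\,u^{1/\gamma}\,\overline{H}^{(1)}(Z_{n-k:n}u)}-1\Big)$ and note that $(n/k)\,u^{1/\gamma}\,\overline{H}^{(1)}(Z_{n-k:n}u)=\big(\tfrac nk\overline{H}^{(1)}(Z_{n-k:n})\big)\,u^{1/\gamma}\,\dfrac{\overline{H}^{(1)}(Z_{n-k:n}u)}{\overline{H}^{(1)}(Z_{n-k:n})}$, where by \emph{(ii)} the first factor tends to $p$ and the second is $1+o(u^{\epsilon})$ uniformly in $u\ge1$, the $o(\cdot)$ being below any prescribed multiple of $u^{\epsilon}$ once $n$ is large. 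Hence the bracket is $\tfrac nk u^{1/\gamma}$ times a uniformly $o_{\mathbb{P}}(u^{\epsilon})$ quantity; combining this with \emph{(i)}, with $dH_{n}(Z_{n-k:n}u)=\tfrac kn\,d\!\left(\tfrac nk H_{n}(Z_{n-k:n}u)\right)$, and with an integration by parts as in \emph{(iii)}, one gets $\mathbf{R}_{n5}(x)=o_{\mathbb{P}}(1)\,O_{\mathbb{P}}\!\left(x^{(\eta-1/2)/\gamma+2\epsilon}\right)=o_{\mathbb{P}}\!\left(x^{-\tau/\gamma}\right)$ uniformly, provided $\eta<1/2-\tau$.

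For $\mathbf{R}_{n4}$ the defining identity (\ref{betan}) turns the integrand into $-\tfrac{\sqrt k}{n}\,\beta_{n}^{\ast}(u)\,\beta_{n}(u)\,\big/\big(\overline{H}_{n}^{(1)}(Z_{n-k:n}u)\,\overline{H}^{(1)}(Z_{n-k:n}u)\big)$, so the extra factor $\overline{H}_{n}^{(1)}-\overline{H}^{(1)}$ buys an additional $k^{-1/2}$. With \emph{(ii)} controlling $\overline{H}^{(1)}(Z_{n-k:n}u)$ from below by $c\,\tfrac kn u^{-1/\gamma-\epsilon}$, and with $\overline{H}_{n}^{(1)}(Z_{n-k:n}u)\ge c'\,\overline{H}^{(1)}(Z_{n-k:n}u)$ on $\mathcal{A}_{n}$ — this last bound coming from the representation (\ref{HN1})--(\ref{rep-H1}) together with assertion~$(i)$ of Lemma~\ref{lemma-5} applied with $t=\overline{H}^{(1)}(w)$ — one arrives at $|\mathbf{R}_{n4}(x)|\le\tfrac{C}{\sqrt k}\int_{x}^{Z_{n:n}/Z_{n-k:n}}u^{(1+2\eta)/\gamma+2\epsilon}\,d\!\left(\tfrac nk H_{n}(Z_{n-k:n}u)\right)$, and an integration by parts as in \emph{(iii)} then gives $\sup_{x\ge1}x^{\tau/\gamma}|\mathbf{R}_{n4}(x)|=O_{\mathbb{P}}\!\left(k^{\tau+2\eta+O(\epsilon)-1/2}\right)=o_{\mathbb{P}}(1)$ for $\eta<1/4-\tau/2$.

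The genuinely delicate point — and the one I expect to be the main obstacle — is exactly the lower bound on $\overline{H}_{n}^{(1)}(Z_{n-k:n}u)$ needed for $\mathbf{R}_{n4}$: a priori $\overline{H}_{n}^{(1)}$ vanishes at the uppermost order statistics (the degeneracy that the summand $k^{-1}$ in the definition of $\Delta_{n}$ was introduced to circumvent), so $1/\overline{H}_{n}^{(1)}(Z_{n-k:n}u)$ is not uniformly bounded in $u$, and the decomposition producing $\mathbf{R}_{n4}$ — which rested on the expansion $\varphi_{n}(v)=v^{-1}+O(k^{-1})v^{-2}$, invalid at $v=0$ — is strictly meaningful only on $\{w:\overline{H}_{n}^{(1)}(w)>0\}$. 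The fix is to carry out all of the above on that set, where assertion~$(i)$ of Lemma~\ref{lemma-5} does keep $\overline{H}_{n}^{(1)}(w)/\overline{H}^{(1)}(w)$ bounded away from $0$ for $w$ below $Q(1-U_{1:n})$, and to absorb the excluded order statistics separately: there are only $O_{\mathbb{P}}(1)$ of them, each contributing the exact value $\varphi_{n}(0)=k$, so their total weight in $\sqrt k(\Delta_{n}-r_{n})$ is $O_{\mathbb{P}}(\sqrt k/n)$, and multiplying by $x^{\tau/\gamma}$ with $x\le Z_{n:n}/Z_{n-k:n}$ still leaves $o_{\mathbb{P}}(1)$ since $k^{1/2+\tau}/n\le k/n\to0$. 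Granting this, both estimates reduce to the routine bookkeeping sketched above.
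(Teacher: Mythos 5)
Your handling of $\mathbf{R}_{n5}$ reproduces the paper's argument: the bracket is controlled by combining Potter's inequality for $\overline{H}^{(1)}$ with $\overline{H}^{(1)}\left(  Z_{n-k:n}\right)  /\overline{H}\left(  Z_{n-k:n}\right)  \rightarrow p$ and $\frac{n}{k}\overline{H}\left(  Z_{n-k:n}\right)  \overset{\mathbb{P}}{\rightarrow}1,$ giving a bound of order $\epsilon\frac{n}{k}u^{1/\gamma+\epsilon},$ after which Lemma \ref{Lemma-3} and an integration by parts against $\frac{n}{k}H_{n}\left(  Z_{n-k:n}\cdot\right)  $ finish the job. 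Your main line for $\mathbf{R}_{n4}$ is also essentially the paper's, up to a cosmetic difference: you write the numerator as $\frac{\sqrt{k}}{n}\beta_{n}\left(  u\right)  $ and invoke Lemma \ref{Lemma-3} for it, whereas the paper splits $\overline{H}_{n}^{\left(  1\right)  }-\overline{H}^{\left(  1\right)  }$ into $\left(  \overline{H}_{n}-\overline{H}\right)  -\left(  \overline{H}_{n}^{\left(  0\right)  }-\overline{H}^{\left(  0\right)  }\right)  $ and uses the weighted bound of Lemma \ref{lemma-5}$\left(  iii\right)  $; the rates come out the same.

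The genuine problem is your patch for the vanishing of $\overline{H}_{n}^{\left(  1\right)  }$ at the top, and you have correctly located the soft spot: the paper gets past it only by asserting that $\overline{H}_{n}^{\left(  1\right)  }$ dominates a constant multiple of $\overline{H}_{n},$ which fails at the uppermost order statistics whenever the largest observations are censored. But your accounting of the excluded atoms is off by a factor of $n.$ An atom $Z_{j:n}$ with $\overline{H}_{n}^{\left(  1\right)  }\left(  Z_{j:n}\right)  =0$ carries $dH_{n}$-mass $1/n$ and integrand value $\varphi_{n}\left(  0\right)  =k,$ and $\Delta_{n}$ carries the normalization $n/k$ in front of the integral; each such atom therefore contributes $\frac{n}{k}\cdot k\cdot\frac{1}{n}=1$ to $\Delta_{n}\left(  x\right)  $ and hence $\sqrt{k},$ not $\sqrt{k}/n,$ to $\sqrt{k}\left(  \Delta_{n}\left(  x\right)  -r_{n}\left(  x\right)  \right)  .$ Since $\mathbb{P}\left\{  \delta_{\left[  n:n\right]  }=0\right\}  \rightarrow1-p>0,$ at least one such atom is present with non-vanishing probability, and its contribution diverges instead of being $o_{\mathbb{P}}\left(  1\right)  ;$ the proposed repair therefore does not close the gap, and the lemma is not established by this route. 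A secondary point: the lower bound $\overline{H}_{n}^{\left(  1\right)  }\left(  w\right)  \geq c^{\prime}\overline{H}^{\left(  1\right)  }\left(  w\right)  $ on the restricted set is not a consequence of Lemma \ref{lemma-5}$\left(  i\right)  ,$ which controls $\mathbb{U}_{n}\left(  t\right)  /t$ near $0$ and hence the lower tail $H_{n}^{\left(  1\right)  }/H^{\left(  1\right)  };$ by $\left(  \ref{rep-H1}\right)  ,$ the survival $\overline{H}_{n}^{\left(  1\right)  }\left(  v\right)  =\mathbb{U}_{n}\left(  \theta\right)  -\mathbb{U}_{n}\left(  \theta-\overline{H}^{\left(  1\right)  }\left(  v\right)  \right)  $ is an increment of $\mathbb{U}_{n}$ to the left of the interior point $\theta,$ for which no such two-sided ratio bound holds down to the first gap.
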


\begin{proof}
Recall that%
\[
R_{n4}\left(  x\right)  =-\int_{x}^{\infty}\frac{\overline{H}_{n}^{\left(
1\right)  }\left(  uZ_{n-k:n}\right)  -\overline{H}^{\left(  1\right)
}\left(  uZ_{n-k:n}\right)  }{\overline{H}_{n}^{\left(  1\right)  }\left(
uZ_{n-k:n}\right)  \overline{H}^{\left(  1\right)  }\left(  uZ_{n-k:n}\right)
}\beta_{n}^{\ast}\left(  u\right)  dH_{n}\left(  uZ_{n-k:n}\right)  .
\]
First note that $\overline{H}^{\left(  1\right)  }>\overline{H}$ and
$\overline{H}_{n}^{\left(  1\right)  }>\overline{H}_{n}.$ On the other hand,
by using $\left(  \ref{rep-H}\right)  $ and assertion $\left(  i\right)  $ of
Lemma \ref{lemma-5}, we infer that $\overline{H}\left(  uZ_{n-k:n}\right)
/\overline{H}_{n}\left(  uZ_{n-k:n}\right)  =O_{\mathbb{P}}\left(  1\right)
,$ uniformly over $u\geq1.$ It follows that%
\[
R_{n4}\left(  x\right)  =O_{\mathbb{P}}\left(  1\right)  \int_{x}^{\infty
}\frac{\left\vert \overline{H}_{n}^{\left(  1\right)  }\left(  uZ_{n-k:n}%
\right)  -\overline{H}^{\left(  1\right)  }\left(  uZ_{n-k:n}\right)
\right\vert }{\left[  \overline{H}\left(  uZ_{n-k:n}\right)  \right]  ^{2}%
}\left\vert \beta_{n}^{\ast}\left(  u\right)  \right\vert dH_{n}\left(
uZ_{n-k:n}\right)  .
\]
Observe that $\overline{H}_{n}^{\left(  1\right)  }\left(  uZ_{n-k:n}\right)
-\overline{H}^{\left(  1\right)  }\left(  uZ_{n-k:n}\right)  $ may be
rewritten into%
\[
\left(  \overline{H}_{n}\left(  uZ_{n-k:n}\right)  -\overline{H}\left(
uZ_{n-k:n}\right)  \right)  -\left(  \overline{H}_{n}^{\left(  0\right)
}\left(  uZ_{n-k:n}\right)  -\overline{H}^{\left(  0\right)  }\left(
uZ_{n-k:n}\right)  \right)  .
\]
It is clear that $R_{n,4}\left(  x\right)  =O_{\mathbb{P}}\left(  1\right)
\left(  C_{n}\left(  x\right)  +C_{n}^{\left(  0\right)  }\left(  x\right)
\right)  ,$ where%
\[
C_{n}\left(  x\right)  :=\int_{x}^{\infty}\frac{\left\vert \overline{H}%
_{n}\left(  uZ_{n-k:n}\right)  -\overline{H}\left(  uZ_{n-k:n}\right)
\right\vert }{\left[  \overline{H}\left(  uZ_{n-k:n}\right)  \right]  ^{2}%
}\left\vert \beta_{n}^{\ast}\left(  u\right)  \right\vert dH_{n}\left(
uZ_{n-k:n}\right)  ,
\]
and%
\[
C_{n}^{\left(  0\right)  }\left(  x\right)  :=\int_{x}^{\infty}\frac
{\left\vert \overline{H}_{n}^{\left(  0\right)  }\left(  uZ_{n-k:n}\right)
-\overline{H}^{\left(  0\right)  }\left(  uZ_{n-k:n}\right)  \right\vert
}{\left[  \overline{H}\left(  uZ_{n-k:n}\right)  \right]  ^{2}}\left\vert
\beta_{n}^{\ast}\left(  u\right)  \right\vert dH_{n}\left(  uZ_{n-k:n}\right)
.
\]
Next, we show that both $C_{n}\left(  x\right)  $ and $C_{n}^{\left(
0\right)  }\left(  x\right)  $ are equal to $o_{\mathbb{P}}\left(
x^{-\tau/\gamma}\right)  .$ Indeed, by making use of assertion $\left(
ii\right)  $ in Lemma \ref{lemma-5}, we write
\[
C_{n}\left(  x\right)  =O_{\mathbb{P}}\left(  n^{-\eta}\right)  \int
_{x}^{\infty}\frac{u^{-\left(  1/2-\eta\right)  /\gamma+\epsilon}}{\left[
\overline{H}\left(  uZ_{n-k:n}\right)  \right]  ^{1+\eta}}dH_{n}\left(
uZ_{n-k:n}\right)  .
\]
In view of inequality $\left(  \ref{inequa-H}\right)  ,$ we get
\[
C_{n}\left(  x\right)  =O_{\mathbb{P}}\left(  n^{-\eta}\right)  \left(
\frac{n}{k}\right)  ^{1+\eta}\int_{x}^{\infty}u^{\left(  1+\eta\right)
/\gamma-\left(  1/2-\eta\right)  /\gamma+\epsilon}dH_{n}\left(  uZ_{n-k:n}%
\right)  .
\]
By an integration by parts and once again by making use of assertion $\left(
i\right)  $ of Lemma \ref{lemma-5}, we end up with $C_{n}\left(  x\right)
=O_{\mathbb{P}}\left(  k^{-\eta}\right)  x^{\left(  \eta-1/2\right)  /\gamma
}.$ Since $k\rightarrow\infty$ and $x^{\left(  \eta-1/2\right)  /\gamma
}=O\left(  x^{-\tau/\gamma}\right)  ,$ uniformly over $x\geq1,$ it follows
that $C_{n}\left(  x\right)  =o_{\mathbb{P}}\left(  x^{-\tau/\gamma}\right)
.$ Let us now consider $C_{n}^{\left(  0\right)  }\left(  x\right)  .$ Note
that we also have $\overline{H}^{\left(  0\right)  }>\overline{H}$ and
$\overline{H}_{n}^{\left(  0\right)  }>\overline{H}_{n}$ and recall that from
$\left(  \ref{HN0}\right)  ,$ we have almost surely%
\[
\overline{H}_{n}^{\left(  0\right)  }\left(  v\right)  -\overline{H}^{\left(
0\right)  }\left(  v\right)  =\mathbb{U}_{n}\left(  1-\overline{H}^{\left(
0\right)  }\left(  v\right)  \right)  -\left(  1-\overline{H}^{\left(
0\right)  }\left(  v\right)  \right)  .
\]
Then, by using similar arguments as for $C_{n}\left(  x\right)  $ we also
readily show that $C_{n}^{\left(  0\right)  }\left(  x\right)  =o_{\mathbb{P}%
}\left(  x^{-\tau/\gamma}\right)  ,$ therefore we omit further details. Let us
now take care of the second term%
\[
R_{n5}\left(  x\right)  =\int_{x}^{\infty}\left\{  \frac{1}{\overline
{H}^{\left(  1\right)  }\left(  Z_{n-k:n}u\right)  }-\frac{n}{k}\frac{1}%
{p}u^{1/\gamma}\right\}  \beta_{n}^{\ast}\left(  u\right)  dH_{n}\left(
Z_{n-k:n}u\right)  .
\]
Recall that $1/\overline{H}^{\left(  1\right)  }\in\mathcal{RV}_{\left(
1/\gamma\right)  },$ then by using Lemma \ref{lemma-5}, we infer that with a
probability tending to $1,$ large $n$ and for every small $\epsilon>0,$ we
have%
\[
\left\vert \frac{\overline{H}^{\left(  1\right)  }\left(  Z_{n-k:n}\right)
}{\overline{H}^{\left(  1\right)  }\left(  Z_{n-k:n}u\right)  }-u^{1/\gamma
}\right\vert <\epsilon u^{\epsilon},
\]
uniformly over $u\geq1.\ $However, by making use of inequality $\left(
\ref{inequa-p}\right)  ,$ we get
\[
\left\vert \frac{\overline{H}\left(  Z_{n-k:n}\right)  }{\overline{H}^{\left(
1\right)  }\left(  Z_{n-k:n}\right)  }-\frac{1}{p}\right\vert <\epsilon.
\]
On the other hand, since $\dfrac{n}{k}\overline{H}\left(  Z_{n-k:n}\right)
\overset{\mathbb{P}}{\rightarrow}1$ then with a probability tending to $1$ and
large $n,$ we have $\left\vert \dfrac{n}{k}\overline{H}\left(  Z_{n-k:n}%
\right)  -1\right\vert <\epsilon.$ By combining the last three inequalities,
we end up with%
\begin{equation}
\left\vert \frac{1}{\overline{H}^{\left(  1\right)  }\left(  Z_{n-k:n}%
u\right)  }-\frac{n}{k}\frac{1}{p}u^{1/\gamma}\right\vert <\epsilon
u^{\epsilon+1/\gamma}, \label{three}%
\end{equation}
uniformly over $u\geq1,\ $this implies that
\[
R_{n5}\left(  x\right)  =o_{\mathbb{P}}\left(  1\right)  \int_{x}^{\infty
}u^{-\left(  1/2-\eta\right)  /\gamma-\epsilon}z^{\epsilon+1/\gamma}%
dH_{n}\left(  Z_{n-k:n}u\right)  .
\]
Once again, by using assertion $\left(  i\right)  $ of Lemma \ref{lemma-5},
yields
\[
R_{n5}\left(  x\right)  =o_{\mathbb{P}}\left(  1\right)  \int_{x}^{\infty
}u^{-\left(  1/2-\eta\right)  /\gamma-\epsilon}u^{-1/\gamma-1+1/\gamma
+\epsilon+\epsilon}du,
\]
which equals $o_{\mathbb{P}}\left(  1\right)  x^{\left(  \eta-1/2\right)
/\gamma+\epsilon}=o_{\mathbb{P}}\left(  x^{-\tau/\gamma}\right)  .$ This
achieves the proof of the lemma.
\end{proof}

\begin{lemma}
\label{Lemma-9}For every $0\leq\tau<1/8,$ we have $R_{ni}\left(  x\right)
=o_{\mathbb{P}}\left(  x^{-\tau/\gamma}\right)  ,$ $i=6,7.$
\end{lemma}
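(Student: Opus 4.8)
The plan is to observe that, once we perform the change of variable $w=Z_{n-k:n}u$ exactly as in the proof of Lemma~\ref{Lemma-8}, the remainders $\mathbf{R}_{n6}$ and $\mathbf{R}_{n7}$ are of precisely the same shape as $\mathbf{R}_{n4}$ and $\mathbf{R}_{n5}$ there, the only differences being that the process $\beta_n^{\ast}$ is replaced by $\beta_n$ and that a harmless extra factor $\overline{H}\left(Z_{n-k:n}u\right)/\overline{H}^{\left(1\right)}\left(Z_{n-k:n}u\right)$ appears. Since by inequality~(\ref{inequa-p}) this extra factor equals $p^{-1}\left(1+o\left(1\right)\right)$ uniformly over $u\ge1$, and since Lemma~\ref{Lemma-3} furnishes the \emph{same} weighted uniform bound $\sup_{u\ge1}u^{\left(1/2-\eta\right)/\gamma}\left\vert\beta_n\left(u\right)\right\vert=O_{\mathbb{P}}\left(1\right)$, for each fixed $0<\eta<1/2$, as it does for $\beta_n^{\ast}$, the whole chain of estimates carried out in Lemma~\ref{Lemma-8} transfers almost verbatim. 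It therefore suffices to indicate the two or three places where the new ingredients enter.

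For $\mathbf{R}_{n6}$, I would write $1/\overline{H}_n^{\left(1\right)}-1/\overline{H}^{\left(1\right)}=\bigl(\overline{H}^{\left(1\right)}-\overline{H}_n^{\left(1\right)}\bigr)/\bigl(\overline{H}_n^{\left(1\right)}\overline{H}^{\left(1\right)}\bigr)$, bound $\overline{H}\left(Z_{n-k:n}u\right)/\overline{H}_n\left(Z_{n-k:n}u\right)=O_{\mathbb{P}}\left(1\right)$ uniformly over $u\ge1$ (from the representations (\ref{rep-H}), (\ref{HN0}), (\ref{HN1}) together with Lemma~\ref{lemma-5}\,$(i)$), and use (\ref{inequa-p}) to replace $1/\overline{H}^{\left(1\right)}$ and $1/\overline{H}_n^{\left(1\right)}$ by quantities of order $1/\overline{H}$. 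This reduces $\mathbf{R}_{n6}\left(x\right)$ to $O_{\mathbb{P}}\left(1\right)$ times exactly the integral treated for $\mathbf{R}_{n4}$ in Lemma~\ref{Lemma-8}, with $\left\vert\beta_n\right\vert$ in place of $\left\vert\beta_n^{\ast}\right\vert$. Splitting $\overline{H}_n^{\left(1\right)}-\overline{H}^{\left(1\right)}=\bigl(\overline{H}_n-\overline{H}\bigr)-\bigl(\overline{H}_n^{\left(0\right)}-\overline{H}^{\left(0\right)}\bigr)$, estimating each of the two pieces by Lemma~\ref{lemma-5}\,$(iii)$, controlling $1/\overline{H}$ by Proposition~\ref{Potter} (via (\ref{inequa-H})), and finally integrating $\mathrm{d}H_n\left(Z_{n-k:n}u\right)$ by parts against $\tfrac nk\overline{H}_n\left(Z_{n-k:n}u\right)=O_{\mathbb{P}}\bigl(u^{-1/\gamma+\epsilon}\bigr)$ (Lemma~\ref{lemma-5}\,$(i)$ and Proposition~\ref{Potter}) gives, just as for $C_n$ and $C_n^{\left(0\right)}$ there, $\mathbf{R}_{n6}\left(x\right)=O_{\mathbb{P}}\left(k^{-\eta}\right)x^{\left(\eta-1/2\right)/\gamma}$ uniformly over $x\ge1$. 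For $0<\eta<1/2$ and $\epsilon>0$ taken small enough that $\left(\eta-1/2\right)/\gamma+\epsilon<-\tau/\gamma$ — possible since $\tau<1/8<1/2$ — this is $o_{\mathbb{P}}\left(x^{-\tau/\gamma}\right)$.

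For $\mathbf{R}_{n7}$, the only new point is to show that the bracket $\overline{H}/\bigl(\overline{H}^{\left(1\right)}\bigr)^{2}-p^{-2}\tfrac nk u^{1/\gamma}$ is $o_{\mathbb{P}}\bigl(\tfrac nk u^{1/\gamma+\epsilon}\bigr)$ uniformly over $u\ge1$; this is done exactly as the analogous estimate for the bracket of $\mathbf{R}_{n5}$, by factoring $\overline{H}/\bigl(\overline{H}^{\left(1\right)}\bigr)^{2}=\bigl(\overline{H}/\overline{H}^{\left(1\right)}\bigr)\cdot\bigl(1/\overline{H}^{\left(1\right)}\bigr)$, using (\ref{inequa-p}) for the first factor and inequality~(\ref{three}) for the second. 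Combining this with the weighted bound on $\beta_n$ from Lemma~\ref{Lemma-3} and integrating $\mathrm{d}H_n\left(Z_{n-k:n}u\right)$ by parts against $\overline{H}_n\left(Z_{n-k:n}u\right)$ — so that its $k/n$ cancels the $n/k$ — yields, via Lemma~\ref{lemma-5}\,$(i)$ and Proposition~\ref{Potter}, $\mathbf{R}_{n7}\left(x\right)=o_{\mathbb{P}}\left(1\right)x^{\left(\eta-1/2\right)/\gamma+2\epsilon}=o_{\mathbb{P}}\left(x^{-\tau/\gamma}\right)$, again for $\eta$ and $\epsilon$ chosen sufficiently small.

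The step I expect to require the most care — and essentially the only non-mechanical one — is the bookkeeping of the powers of $n/k$: the large factors produced by $1/\overline{H}^{\left(1\right)}$ (and $1/\bigl(\overline{H}^{\left(1\right)}\bigr)^{2}$) must be tracked through every estimate and cancelled exactly by the $k/n$ that surfaces upon integrating $\mathrm{d}H_n\left(Z_{n-k:n}u\right)$ by parts against $\overline{H}_n\left(Z_{n-k:n}u\right)$, after which the residual power of $u$ has to be pushed strictly below $-\tau/\gamma$ by a suitable choice of the auxiliary parameters $\eta\in\left(0,1/2\right)$ and $\epsilon>0$ — which is possible precisely because $\tau<1/8$. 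Everything else is a line-by-line repetition of the arguments used for $\mathbf{R}_{n4}$ and $\mathbf{R}_{n5}$ in the proof of Lemma~\ref{Lemma-8}, so no genuinely new idea is needed.
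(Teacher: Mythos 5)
Your argument is correct and draws on exactly the same toolkit the paper uses (Potter-type bounds, inequality (\ref{inequa-p}), the weighted bound of Lemma \ref{Lemma-3}, Lemma \ref{lemma-5} and integration by parts), but it is organized differently from the paper's own proof of this lemma. The paper ``recalls'' $\mathbf{R}_{n6}$ in a form that does not match its definition in Section \ref{sec5} — it bounds a purely deterministic bracket $\overline{H}(Z_{n-k:n})/\overline{H}^{(1)}(uZ_{n-k:n})-p^{-1}u^{1/\gamma}$ by $o_{\mathbb{P}}(u^{1/\gamma+\epsilon})$ and pairs it with $\beta_{n}^{\ast}$, i.e.\ it runs the $\mathbf{R}_{n5}$-type argument, and then dismisses $\mathbf{R}_{n7}$ as ``similar.'' You instead treat $\mathbf{R}_{n6}$ as what its definition says it is: a product of two fluctuations, $\bigl(1/\overline{H}_{n}^{(1)}-1/\overline{H}^{(1)}\bigr)$ against $\beta_{n}$, handled exactly like $\mathbf{R}_{n4}$ (difference of reciprocals, splitting via $\overline{H}_{n}^{(1)}-\overline{H}^{(1)}=(\overline{H}_{n}-\overline{H})-(\overline{H}_{n}^{(0)}-\overline{H}^{(0)})$, Lemma \ref{lemma-5}\,$(iii)$), and you reserve the deterministic-bracket argument for $\mathbf{R}_{n7}$, where it belongs. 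This assignment is the one consistent with the definitions, so your route is, if anything, cleaner than the paper's. One small point of care: the displayed definition of $\mathbf{R}_{n6}$ carries $[\overline{H}^{(1)}]^{2}$ in the denominator, so taken literally the ``extra factor'' is $\overline{H}/[\overline{H}^{(1)}]^{2}\sim p^{-2}(n/k)u^{1/\gamma}$, which is not harmless and would break the integration by parts; however, the algebra of the decomposition $T_{n2}=T_{n2}^{(1)}+\mathbf{R}_{n6}$ forces the exponent to be $1$ (the square is a typo in the paper), and with the factor $\overline{H}/\overline{H}^{(1)}\rightarrow p^{-1}$ that you use, all of your exponent bookkeeping checks out and yields $o_{\mathbb{P}}(x^{-\tau/\gamma})$ for $\eta$ and $\epsilon$ small.
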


\begin{proof}
Recall that%
\[
R_{n6}\left(  x\right)  =\int_{x}^{\infty}\left\{  \frac{\overline{H}\left(
Z_{n-k:n}\right)  }{\overline{H}^{\left(  1\right)  }\left(  uZ_{n-k:n}%
\right)  }-\frac{1}{p}u^{1/\gamma}\right\}  \beta_{n}^{\ast}\left(  u\right)
d\left\{  \frac{H_{n}\left(  Z_{n-k:n}u\right)  }{\overline{H}\left(
Z_{n-k:n}\right)  }\right\}  .
\]
It it clear that%
\[
\left\vert R_{n6}\left(  x\right)  \right\vert \leq\int_{x}^{\infty}\left\vert
\frac{\overline{H}\left(  Z_{n-k:n}\right)  }{\overline{H}^{\left(  1\right)
}\left(  uZ_{n-k:n}\right)  }-\frac{1}{p}u^{1/\gamma}\right\vert \left\vert
\beta_{n}^{\ast}\left(  u\right)  \right\vert d\left\{  \frac{H_{n}\left(
Z_{n-k:n}u\right)  }{\overline{H}\left(  Z_{n-k:n}\right)  }\right\}  .
\]
Let us write%
\begin{align*}
&  \frac{\overline{H}\left(  Z_{n-k:n}\right)  }{\overline{H}^{\left(
1\right)  }\left(  uZ_{n-k:n}\right)  }-\frac{1}{p}u^{1/\gamma}\\
&  =\left(  \frac{\overline{H}\left(  Z_{n-k:n}\right)  }{\overline
{H}^{\left(  1\right)  }\left(  Z_{n-k:n}\right)  }-\frac{1}{p}\right)
\frac{\overline{H}^{\left(  1\right)  }\left(  Z_{n-k:n}\right)  }%
{\overline{H}^{\left(  1\right)  }\left(  uZ_{n-k:n}\right)  }+\frac{1}%
{p}\left(  \frac{\overline{H}^{\left(  1\right)  }\left(  Z_{n-k:n}\right)
}{\overline{H}^{\left(  1\right)  }\left(  uZ_{n-k:n}\right)  }-u^{1/\gamma
}\right)  .
\end{align*}
By using inequality $\left(  \ref{inequa-p}\right)  $ and Proposition
\ref{Potter} to $\overline{H}^{\left(  1\right)  },$ we infer that%
\[
\frac{\overline{H}\left(  Z_{n-k:n}\right)  }{\overline{H}^{\left(  1\right)
}\left(  uZ_{n-k:n}\right)  }-\frac{1}{p}u^{1/\gamma}=o_{\mathbb{P}}\left(
u^{1/\gamma+\epsilon}\right)  ,\text{ uniformly over }u\geq1.
\]
It follows that $R_{n6}\left(  x\right)  =o_{\mathbb{P}}\left(  1\right)
\int_{x}^{\infty}u^{\left(  1/2+\eta\right)  /\gamma+\epsilon}d\left\{
H_{n}\left(  Z_{n-k:n}u\right)  /\overline{H}\left(  Z_{n-k:n}\right)
\right\}  .$ By using an integration by parts then once again by making use of
assertion $\left(  i\right)  $ of Lemma \ref{lemma-5}, we show easily that
$R_{n6}\left(  x\right)  =o_{\mathbb{P}}\left(  x^{-\tau/\gamma}\right)  .$ By
similar routine arguments, we also show $R_{n7}\left(  x\right)
=o_{\mathbb{P}}\left(  x^{-\tau/\gamma}\right)  $ that we omit further details.
\end{proof}

\begin{lemma}
\label{Lemma-10}For every $0\leq\tau<1/8,$ we have $R_{ni}\left(  x\right)
=o_{\mathbb{P}}\left(  x^{-\tau/\gamma}\right)  ,$ $i=8,9.$
\end{lemma}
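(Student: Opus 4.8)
The plan is to bound $\mathbf{R}_{n8}$ and $\mathbf{R}_{n9}$ by elementary estimates, working throughout on the event $\mathcal{A}_{n}:=\left\{ \left\vert Z_{n-k:n}/h-1\right\vert <\epsilon\right\} $, which has probability tending to $1$ since $Z_{n-k:n}/h\overset{\mathbb{P}}{\rightarrow}1$, together with $\sqrt{k}\left( Z_{n-k:n}/h-1\right) =O_{\mathbb{P}}\left( 1\right) $ (see \cite{BMN-2015}). Both quantities live only on the thin window between $xZ_{n-k:n}$ and $xh$, whose relative width is $x\left\vert Z_{n-k:n}/h-1\right\vert =O_{\mathbb{P}}\left( xk^{-1/2}\right) $; the extra $k^{-1/2}$ this produces, together with the hypotheses $\sqrt{k}A_{2}\left( h\right) =O\left( 1\right) $ (recall $A_{2}:=\overline{H}^{\left( 1\right) }/\overline{H}-p$) and $\sqrt{k}A\left( h\right) =O\left( 1\right) $, is exactly what cancels the prefactor $n/\sqrt{k}=\sqrt{k}\left( n/k\right) $. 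I also use $\overline{H}\left( h\right) =k/n$, the Potter bound $\overline{H}\left( hv\right) /\overline{H}\left( h\right) \leq\left( 1+\epsilon\right) v^{-1/\gamma+\epsilon}$ of Proposition \ref{Potter}, and the first-order fact $\overline{H}/\overline{H}^{\left( 1\right) }\rightarrow p^{-1}$.

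For $i=8$, bound $\left\vert \mathbf{R}_{n8}\left( x\right) \right\vert $ by $\tfrac{n}{\sqrt{k}}\sup_{w}\left\vert \overline{H}\left( w\right) /\overline{H}^{\left( 1\right) }\left( w\right) -p^{-1}\right\vert \cdot\left\vert \overline{H}\left( xZ_{n-k:n}\right) -\overline{H}\left( xh\right) \right\vert $, the supremum running over $w$ in the window. Since $\overline{H}/\overline{H}^{\left( 1\right) }-p^{-1}=-\left( A_{2}/p^{2}\right) \left( 1+o\left( 1\right) \right) $ with $A_{2}$ regularly varying of non-positive index and $\sqrt{k}A_{2}\left( h\right) =O\left( 1\right) $, Proposition \ref{Potter} applied to $A_{2}$ gives, on $\mathcal{A}_{n}$, $\sup_{w}\left\vert \overline{H}\left( w\right) /\overline{H}^{\left( 1\right) }\left( w\right) -p^{-1}\right\vert =O\left( k^{-1/2}\right) x^{\epsilon}$; while $\tfrac{n}{\sqrt{k}}\left\vert \overline{H}\left( xZ_{n-k:n}\right) -\overline{H}\left( xh\right) \right\vert =\sqrt{k}\,\tfrac{\overline{H}\left( xh\right) }{\overline{H}\left( h\right) }\left\vert \tfrac{\overline{H}\left( xZ_{n-k:n}\right) }{\overline{H}\left( xh\right) }-1\right\vert \leq\sqrt{k}\left( 1+\epsilon\right) x^{-1/\gamma+\epsilon}o_{\mathbb{P}}\left( 1\right) $, the $o_{\mathbb{P}}\left( 1\right) $ coming from $\overline{H}\left( xZ_{n-k:n}\right) /\overline{H}\left( xh\right) \overset{\mathbb{P}}{\rightarrow}1$. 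Multiplying, the two $\sqrt{k}$'s disappear and $\left\vert \mathbf{R}_{n8}\left( x\right) \right\vert \leq o_{\mathbb{P}}\left( 1\right) x^{-1/\gamma+2\epsilon}=o_{\mathbb{P}}\left( x^{-\tau/\gamma}\right) $ uniformly over $x\geq1$, for $\epsilon$ small.

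For $i=9$, factor out $\tfrac{n}{\sqrt{k}}\overline{H}\left( xh\right) =\sqrt{k}\,\overline{H}\left( xh\right) /\overline{H}\left( h\right) \leq\sqrt{k}\left( 1+\epsilon\right) x^{-1/\gamma+\epsilon}$, so it remains to estimate the bracket $\overline{H}\left( xZ_{n-k:n}\right) /\overline{H}\left( xh\right) -\left( Z_{n-k:n}/h\right) ^{-1/\gamma}$. With $\rho_{n}:=Z_{n-k:n}/h$, clearing denominators rewrites it as
\[
\frac{\left[ \overline{H}\left( x\rho_{n}h\right) /\overline{H}\left( h\right) -\left( x\rho_{n}\right) ^{-1/\gamma}\right] -\rho_{n}^{-1/\gamma}\left[ \overline{H}\left( xh\right) /\overline{H}\left( h\right) -x^{-1/\gamma}\right] }{\overline{H}\left( xh\right) /\overline{H}\left( h\right) },
\]
a difference of two second-order errors over a denominator bounded below by $\left( 1-\epsilon\right) x^{-1/\gamma-\epsilon}$. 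Applying the uniform second-order inequality for $\overline{H}\in2\mathcal{RV}_{\left( -1/\gamma,\nu\right) }\left( A\right) $ to each bracket (the arguments $x,x\rho_{n}$ being $\geq1$ on $\mathcal{A}_{n}$, otherwise flipping the ratio by monotonicity of $\overline{H}$) and using $\left( x\rho_{n}\right) ^{-1/\gamma}=x^{-1/\gamma}\rho_{n}^{-1/\gamma}$, the $A\left( h\right) $-proportional parts collapse into $A\left( h\right) \rho_{n}^{-1/\gamma}x^{-\left( 1-\nu\right) /\gamma}\bigl(\rho_{n}^{\nu/\gamma}-1\bigr)/\left( \nu\gamma\right) =O_{\mathbb{P}}\bigl(\left\vert A\left( h\right) \right\vert k^{-1/2}\bigr)x^{-\left( 1-\nu\right) /\gamma}$, because $\rho_{n}^{\nu/\gamma}-1=O_{\mathbb{P}}\left( \rho_{n}-1\right) =O_{\mathbb{P}}\left( k^{-1/2}\right) $ by the mean value theorem, whereas the leftover error terms are $\leq\epsilon\left\vert A\left( h\right) \right\vert O_{\mathbb{P}}\left( 1\right) x^{-\left( 1-\nu\right) /\gamma+\epsilon}$. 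Dividing by the denominator and multiplying by the factored-out quantity,
\[
\left\vert \mathbf{R}_{n9}\left( x\right) \right\vert \leq O_{\mathbb{P}}\bigl(\left\vert A\left( h\right) \right\vert \bigr)x^{-\left( 1-\nu\right) /\gamma+2\epsilon}+\epsilon\,O_{\mathbb{P}}\bigl(\sqrt{k}\left\vert A\left( h\right) \right\vert \bigr)x^{-\left( 1-\nu\right) /\gamma+3\epsilon},
\]
and since $\left\vert A\left( h\right) \right\vert \to0$, $\sqrt{k}A\left( h\right) =O\left( 1\right) $, and $-\left( 1-\nu\right) /\gamma<-\tau/\gamma$ (because $\nu\leq0$ and $\tau<1$), both terms are $o_{\mathbb{P}}\left( x^{-\tau/\gamma}\right) $ uniformly over $x\geq1$ once $\epsilon$ is small; letting $\epsilon\downarrow0$ finishes the case $i=9$.

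The delicate point is the uniform-in-$x$ control of the local-linearisation bracket in $\mathbf{R}_{n9}$: one has to ensure the powers of $x$ thrown off by the Potter and second-order inequalities multiply out to no more than $x^{-\tau/\gamma}$, and — crucially — that the $\sqrt{k}$ carried by $n/\sqrt{k}$ is genuinely cancelled, which is precisely why the window width $O_{\mathbb{P}}\left( k^{-1/2}\right) $ and the rate hypotheses on $A\left( h\right) $ and $A_{2}\left( h\right) $ both enter in an essential way.
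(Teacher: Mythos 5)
Your estimates are essentially sound and you have correctly identified the key cancellation (the window $[xZ_{n-k:n},xh]$ has relative width $O_{\mathbb{P}}(k^{-1/2})$, which absorbs the prefactor $n/\sqrt{k}$), but there is a hypothesis mismatch: both halves of your argument invoke second-order conditions, whereas Lemma \ref{Lemma-10} is used in the proof of the first approximation $\left(\ref{TP-C1}\right)$ of Theorem \ref{Theorem1}, which is stated under the first-order conditions $\overline{F}\in\mathcal{RV}_{\left(-1/\gamma_{1}\right)}$, $\overline{G}\in\mathcal{RV}_{\left(-1/\gamma_{2}\right)}$ alone. For $i=8$ you take the $k^{-1/2}$ from $\sup_{w}\left\vert\overline{H}(w)/\overline{H}^{(1)}(w)-p^{-1}\right\vert=O(k^{-1/2})x^{\epsilon}$, which requires $\sqrt{k}A_{2}(h)=O(1)$ together with regular variation of $A_{2}$ (neither is assumed at this stage), and you leave the window factor as a rateless $\sqrt{k}\cdot o_{\mathbb{P}}(1)$. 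The paper allocates the budget the other way: the integrand is only $o(1)$ uniformly, by the first-order fact $\overline{H}/\overline{H}^{(1)}\rightarrow p^{-1}$ via $\left(\ref{inequa-p}\right)$, while the rate comes from the window, since after integrating $w^{-1/\gamma-1+\epsilon}$ one gets $\sqrt{k}\bigl(1-(Z_{n-k:n}/h)^{-1/\gamma+\epsilon}\bigr)=O_{\mathbb{P}}(1)$ by the mean value theorem and the tightness of $\sqrt{k}(Z_{n-k:n}/h-1)$. Your proof of $i=8$ is repaired simply by swapping these two roles, so this is a routing issue rather than a fatal one.

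For $i=9$ the discrepancy is more substantive. Your computation — clearing denominators, applying the uniform second-order inequality to both brackets, and observing that the $A(h)$-proportional parts collapse to $O_{\mathbb{P}}\bigl(|A(h)|k^{-1/2}\bigr)$ because $\rho_{n}^{\nu/\gamma}-1=O_{\mathbb{P}}(k^{-1/2})$ — is correct and in fact makes the cancellation of the explicit $\sqrt{k}$ visible, which the paper's own treatment does not (the paper applies only Proposition \ref{Potter} to conclude that the bracket is $o_{\mathbb{P}}(1)$ and then multiplies by $\sqrt{k}\,\overline{H}(xh)/\overline{H}(h)$ without further comment). But your route genuinely needs $\overline{H}\in2\mathcal{RV}_{\left(-1/\gamma,\nu\right)}(A)$ and $\sqrt{k}A(h)=O(1)$, which are only imposed for the second approximation $\left(\ref{TP-C2}\right)$. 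If you want the lemma in the generality in which the paper uses it, you must either produce a first-order argument for $\sqrt{k}\bigl(\overline{H}(xh\rho_{n})/\overline{H}(xh)-\rho_{n}^{-1/\gamma}\bigr)=o_{\mathbb{P}}(x^{\epsilon})$ (exploiting that $\rho_{n}\rightarrow1$ at rate $k^{-1/2}$, not merely Potter's bound at a fixed $\epsilon$), or state explicitly that your proof of the $i=9$ part holds under the additional second-order hypotheses.
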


\begin{proof}
Recall that
\[
R_{n8}\left(  x\right)  =-\frac{n}{\sqrt{k}}\int_{xZ_{n-k:n}}^{xh}\left\{
\frac{\overline{H}\left(  w\right)  }{\overline{H}^{\left(  1\right)  }\left(
w\right)  }-p^{-1}\right\}  d\overline{H}\left(  w\right)  .
\]
By making a change of variables, we get%
\[
R_{n8}\left(  x\right)  =-\sqrt{k}\int_{xZ_{n-k:n}/h}^{x}\left\{
\frac{\overline{H}\left(  hw\right)  }{\overline{H}^{\left(  1\right)
}\left(  hw\right)  }-p^{-1}\right\}  d\frac{\overline{H}\left(  hw\right)
}{\overline{H}\left(  h\right)  }.
\]
In view of inequality $\left(  \ref{inequa-p}\right)  $ and Proposition
\ref{Potter} to $\overline{H}$ we get
\[
R_{n8}\left(  x\right)  =o_{\mathbb{P}}\left(  1\right)  \sqrt{k}%
\int_{xZ_{n-k:n}/h}^{x}w^{-1/\gamma-1+\epsilon}dw.
\]
After integration, we obtain $R_{n8}\left(  x\right)  =o_{\mathbb{P}}\left(
x^{-1/\gamma+\epsilon}\right)  \sqrt{k}\left(  1-\left(  Z_{n-k:n}/h\right)
^{-1/\gamma+\epsilon}\right)  .$ Recall that $\sqrt{k}\left(  1-Z_{n-k:n}%
/h\right)  =O_{\mathbb{P}}\left(  1\right)  ,$ then by using the mean theorem,
we infer that
\[
\sqrt{k}\left(  1-\left(  Z_{n-k:n}/h\right)  ^{-1/\gamma+\epsilon}\right)
=O_{\mathbb{P}}\left(  1\right)  ,
\]
as well. It follows that $R_{n8}\left(  x\right)  =o_{\mathbb{P}}\left(
x^{-1/\gamma+\epsilon}\right)  $ which also equals $o_{\mathbb{P}}\left(
x^{-\tau/\gamma}\right)  .$ Observe now that%
\[
R_{n9}\left(  x\right)  =p^{-1}\frac{\overline{H}\left(  xh\right)
}{\overline{H}\left(  h\right)  }\sqrt{k}\left(  \frac{\overline{H}\left(
xZ_{n-k:n}\right)  }{\overline{H}\left(  xh\right)  }-\left(  \frac{Z_{n-k:n}%
}{h}\right)  ^{-1/\gamma}\right)  .
\]
By applying Proposition \ref{Potter} implies that, for any $0<\epsilon<1,$
there exists $n_{0}=n_{0}\left(  \epsilon\right)  ,$ such that for all
$n>n_{0}$ and $y\geq1$%
\[
\left\vert \frac{\overline{H}\left(  xZ_{n-k:n}\right)  }{\overline{H}\left(
xh\right)  }-\left(  \frac{Z_{n-k:n}}{h}\right)  ^{-1/\gamma}\right\vert
\leq\epsilon\left(  \frac{Z_{n-k:n}}{h}\right)  ^{-1/\gamma+},
\]
it follows from the previous inequality, that
\[
\frac{\overline{H}\left(  xZ_{n-k:n}\right)  }{\overline{H}\left(  xh\right)
}-\left(  \frac{Z_{n-k:n}}{h}\right)  ^{-1/\gamma}=o_{\mathbb{P}}\left(
1\right)  ,.
\]
uniformly over $x\geq1.$ Since $\overline{H}\left(  xh\right)  /\overline
{H}\left(  h\right)  \rightarrow x^{-1/\gamma}$ thus $R_{n9}\left(  x\right)
=o_{\mathbb{P}}\left(  x^{-1/\gamma}\right)  =o_{\mathbb{P}}\left(
x^{-\tau/\gamma}\right)  ,$ as sought.
\end{proof}

\begin{lemma}
\label{Lemma-11}For every $0<\eta<1/2,$ we have%
\[
\sqrt{\frac{n}{k}}\sup_{k^{-1}\leq s\leq1}s^{\left(  \eta-1\right)
/2}\left\vert B_{n}^{\ast}\left(  \frac{k}{n}\left(  \ell_{n}\left(  s\right)
\right)  ^{-1/\gamma}\right)  -B_{n}^{\ast}\left(  \frac{k}{n}s\right)
\right\vert =o_{\mathbb{P}}\left(  1\right)  .
\]

\end{lemma}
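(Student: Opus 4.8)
The plan is to reduce the statement to a weighted oscillation estimate for the Wiener process underlying the Brownian bridges $B_{n}$, and to split the supremum into a range where the argument $(\ell_{n}(s))^{-1/\gamma}$ is uniformly close to $s$ and a short initial range handled by a near-origin weight bound. Write $c_{n}(s):=(\ell_{n}(s))^{-1/\gamma}$. Using $B_{n}^{\ast}(u)=B_{n}(pu)-B_{n}(1-(1-p)u)$ and the representation $B_{n}(t)=W_{n}(t)-tW_{n}(1)$ of each Brownian bridge in terms of a Wiener process $W_{n}$, I would decompose $B_{n}^{\ast}(\tfrac{k}{n}c_{n}(s))-B_{n}^{\ast}(\tfrac{k}{n}s)$ into: (i) the Wiener increment $W_{n}(p\tfrac{k}{n}c_{n}(s))-W_{n}(p\tfrac{k}{n}s)$, with both arguments in $(0,p\tfrac{k}{n}]$; (ii) the analogous increment near $1$, namely $W_{n}(1-(1-p)\tfrac{k}{n}c_{n}(s))-W_{n}(1-(1-p)\tfrac{k}{n}s)$, which after $t\mapsto1-t$ is of the same type as (i); and (iii) linear-drift terms bounded by $\tfrac{k}{n}|c_{n}(s)-s|\,|W_{n}(1)|$. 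Multiplied by $\sqrt{n/k}\,s^{(\eta-1)/2}$, the terms in (iii) become $\sqrt{k/n}\,O_{\mathbb{P}}(1)\,s^{(\eta-1)/2}|c_{n}(s)-s|$, which is $o_{\mathbb{P}}(1)$ uniformly once the control of $c_{n}$ below is in hand; so it suffices to treat $\sqrt{n/k}\sup_{k^{-1}\le s\le1}s^{(\eta-1)/2}\,|W_{n}(p\tfrac{k}{n}c_{n}(s))-W_{n}(p\tfrac{k}{n}s)|$.

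Next I would record the properties of $c_{n}$. Since $\ell_{n}(s)=Q_{n}(1-ks/n)/Z_{n-k:n}\ge1$ for $s\in(0,1]$, the map $c_{n}$ is non-decreasing with $c_{n}(1)=1$ and $c_{n}(s)\le1$. Writing $\ell_{n}$ through the uniform representation $Q_{n}(1-t)\overset{\mathcal{D}}{=}Q(1-\mathbb{V}_{n}(t))$ and $Z_{n-k:n}\overset{\mathcal{D}}{=}Q(1-U_{k+1:n})$, Potter's inequalities for $Q(1-\cdot)\in\mathcal{RV}_{(-\gamma)}$ at $0$ together with the quantile bounds of Lemma~\ref{lemma-5}(ii) give, on an event of probability at least $1-\varepsilon$, deterministic $C_{1},C_{2}>0$ and a small exponent $\rho=\rho(\varepsilon)>0$ with $C_{1}s^{1+\rho}\le c_{n}(s)\le C_{2}s^{1-\rho}$ for all $s\in[k^{-1},1]$. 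Combining this with Lemma~\ref{Lemma-7} (which yields $\sup_{k^{-1}\le s\le1}s^{\gamma+\eta}|\ell_{n}(s)-s^{-\gamma}|=o_{\mathbb{P}}(1)$) and the mean value theorem applied to $x\mapsto x^{-1/\gamma}$, one further obtains, for every fixed cut-off $s_{0}\in(0,1)$ and every prescribed $\theta>0$, that $|c_{n}(s)-s|\le\theta s$ holds uniformly over $s\in[s_{0},1]$ on an event of probability tending to $1$.

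On $[s_{0},1]$ I would estimate the Wiener increment by the oscillation of $W_{n}$ over the interval between $p\tfrac{k}{n}c_{n}(s)$ and $p\tfrac{k}{n}s$, of length at most $p\tfrac{k}{n}\theta s$ and contained in $(0,p\tfrac{k}{n}]$; the self-similarity $\{W_{n}(\tfrac{k}{n}r)\}_{r}\overset{\mathcal{D}}{=}\sqrt{k/n}\,\{W(r)\}_{r}$ extracts $\sqrt{n/k}$, and L\'evy's uniform modulus of continuity for $W$ bounds the result by $C\sqrt{\theta s\log(1/(\theta s_{0}))}$ uniformly in $s\in[s_{0},1]$; multiplied by $s^{(\eta-1)/2}$ this is $C\sqrt{\theta\log(1/(\theta s_{0}))}\,s^{\eta/2}\,O_{\mathbb{P}}(1)$, whose supremum over $s\le1$ is made arbitrarily small by first choosing $s_{0}$, then $\theta$. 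On $[k^{-1},s_{0}]$ I would instead bound each Wiener value separately through the weighted near-origin estimate $\sup_{0<t\le pk/n}|W_{n}(t)|\,t^{-(1/2-\eta'')}=(pk/n)^{\eta''}O_{\mathbb{P}}(1)$, valid for any $0<\eta''<1/2$ (the Chibisov--O'Reilly-type bound underlying Proposition~3.1 of \cite{EHL-2006}); together with $c_{n}(s)\le C_{2}s^{1-\rho}\le C_{2}$ this gives $\sqrt{n/k}\,s^{(\eta-1)/2}|W_{n}(p\tfrac{k}{n}c_{n}(s))-W_{n}(p\tfrac{k}{n}s)|\le O_{\mathbb{P}}(1)\,s^{\alpha}$ with $\alpha=\eta/2-\eta''-\rho(1/2-\eta'')>0$ for $\eta'',\rho$ small, so its supremum over $[k^{-1},s_{0}]$ is at most $O_{\mathbb{P}}(1)\,s_{0}^{\alpha}$, negligible once $s_{0}$ is small.

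The delicate point is the interplay of the two ranges: the cut-off $s_{0}$ must be small enough that $s_{0}^{\alpha}$ is negligible, yet for $s\ge s_{0}$ the closeness $\theta$ one can guarantee is of order $\xi_{n}s_{0}^{-\eta}$ with $\xi_{n}=o_{\mathbb{P}}(1)$ carrying no quantitative rate (no second-order condition being assumed), so $s_{0}$ cannot be driven to $0$ with $n$ — one must argue $\varepsilon$ by $\varepsilon$, fixing $s_{0}=s_{0}(\varepsilon)$, and check that the logarithmic factor produced by L\'evy's modulus is dominated by the $\sqrt{\theta}$ gain. A further minor nuisance is the coupling between the data-dependent $c_{n}$ and $W_{n}$, handled by replacing $c_{n}$ inside the oscillation terms by its deterministic envelope on the good event before applying the Gaussian estimates. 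Getting these exponents and events to dovetail is the main obstacle; everything else is routine bookkeeping of powers of $s$, $k$ and $n/k$.
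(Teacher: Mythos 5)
Your proof is correct, but it takes a genuinely different route from the paper's. Both arguments rest on the same input, namely the consequence of Lemma \ref{Lemma-7} (via the mean value theorem) that $\left\vert \left(\ell_{n}\left(s\right)\right)^{-1/\gamma}-s\right\vert =o_{\mathbb{P}}\left(s^{1-\eta}\right)$ uniformly over $k^{-1}\leq s\leq1,$ and both split $B_{n}^{\ast}$ into its two Brownian-bridge components. The paper then concludes in one stroke by invoking the Cs\"{o}rg\H{o}--Cs\"{o}rg\H{o}--Horv\'{a}th--Mason oscillation inequality (their (1.11)) with the $s$-dependent increment $d=p\tfrac{k}{n}\left(\left(\ell_{n}\left(s\right)\right)^{-1/\gamma}-s\right)$ and $x=p\tfrac{k}{n}s,$ so that $\left\vert B_{n}\left(x+d\right)-B_{n}\left(x\right)\right\vert =O_{\mathbb{P}}\left(\left\vert d\right\vert ^{1/2}\right)$ yields the weighted bound directly. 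You instead pass to the Wiener representation $B_{n}\left(t\right)=W_{n}\left(t\right)-tW_{n}\left(1\right),$ dispose of the drift terms separately, and split the range of $s$ at a cut-off $s_{0},$ treating $\left[s_{0},1\right]$ by L\'{e}vy's modulus of continuity and $\left[k^{-1},s_{0}\right]$ by a weighted near-origin bound on each Wiener value individually, closing with an $\epsilon$-by-$\epsilon$ argument. Your version is longer but more explicit about a point the paper's one-line appeal to (1.11) leaves implicit: that inequality is stated for a fixed increment length $h,$ whereas here the increment length varies with $s$ over several orders of magnitude, and a modulus that is uniform in position necessarily carries a logarithmic factor; your two-range split with the separate near-origin estimate is precisely the device that absorbs this. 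The paper's approach buys brevity and stays entirely within the Brownian-bridge increment framework; yours buys a more self-contained and more carefully quantified argument, at the cost of the bookkeeping with $s_{0},$ $\theta,$ $\rho$ and $\eta^{\prime\prime}$ that you correctly single out as the delicate part.
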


\begin{proof}
Let $0<\eta<1/2$ be fixed, then from Lemma \ref{Lemma-7} and by using the mean
value theorem, we may write
\begin{equation}
\sup_{k^{-1}\leq s\leq1}s^{1-\eta}\left\vert \left(  \ell_{n}\left(  s\right)
\right)  ^{-1/\gamma}-s\right\vert =o_{\mathbb{P}}\left(  1\right)  .
\end{equation}
Recall now that $B_{n}^{\ast}\left(  s\right)  =B_{n}\left(  ps\right)
-B_{n}\left(  1-qs\right)  $ with $q=1-p,$ and write%
\begin{align*}
&  \left\vert B_{n}^{\ast}\left(  \dfrac{k}{n}\left(  \ell_{n}\left(
s\right)  \right)  ^{-1/\gamma}\right)  -B_{n}^{\ast}\left(  \dfrac{k}%
{n}s\right)  \right\vert \\
&  \leq\left\vert B_{n}\left(  p\frac{k}{n}\left(  \ell_{n}\left(  s\right)
\right)  ^{-1/\gamma}\right)  -B_{n}\left(  p\frac{k}{n}s\right)  \right\vert
+\left\vert B_{n}\left(  1-q\frac{k}{n}\left(  \ell_{n}\left(  s\right)
\right)  ^{-1/\gamma}\right)  -B_{n}\left(  1-q\frac{k}{n}s\right)
\right\vert .
\end{align*}
From inequality (1.11) in \cite{CsCsHM86}, we infer that
\begin{equation}
\mathbb{P}\left\{  \sup_{\left\vert d\right\vert <1,\text{ }0\leq x+d\leq
1}d^{1/2}\left\vert B_{n}\left(  x+d\right)  -B_{n}\left(  x\right)
\right\vert \geq\lambda\right\}  \leq c\lambda^{-1}\exp\left(  -\lambda
^{2}/8\right)  , \label{inequa}%
\end{equation}
for every $0<\lambda<\infty,$ where $c$ is a suitably chosen universal
constant. Then, by using this inequality with $d=p\dfrac{k}{n}\left(  \ell
_{n}\left(  s\right)  \right)  ^{-1/\gamma}-p\dfrac{k}{n}s$ and $x=p\dfrac
{k}{n}s,$ together with $\left(  \ref{ksi}\right)  ,$ we may readily show
that
\[
\sqrt{\frac{n}{k}}\left\vert B_{n}\left(  p\frac{k}{n}\left(  \ell_{n}\left(
s\right)  \right)  ^{-1/\gamma}\right)  -B_{n}\left(  p\frac{k}{n}s\right)
\right\vert =o_{\mathbb{P}}\left(  s^{\left(  \eta-1\right)  /2}\right)  .
\]
Likewise, with $d=q\dfrac{k}{n}s-q\dfrac{k}{n}\left(  \ell_{n}\left(
s\right)  \right)  ^{-1/\gamma}$ and $x=1-q\dfrac{k}{n}s,$ we get
\[
\sqrt{\frac{n}{k}}\left\vert B_{n}\left(  1-q\frac{k}{n}\left(  \ell
_{n}\left(  s\right)  \right)  ^{-1/\gamma}\right)  -B_{n}\left(  1-q\frac
{k}{n}s\right)  \right\vert =o_{\mathbb{P}}\left(  s^{\left(  \eta-1\right)
/2}\right)  ,
\]
uniformly over $k^{-1}\leq s\leq1,$ which finishes the proof of the Lemma.
\end{proof}

\begin{lemma}
\label{Lemma-12}For every $0\leq\tau<1/8,$ we have $R_{ni}\left(  x\right)
=o_{\mathbb{P}}\left(  x^{-\tau/\gamma}\right)  ,$ $i=10,11.$
\end{lemma}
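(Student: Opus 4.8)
The plan is to treat $\mathbf{R}_{n10}$ and $\mathbf{R}_{n11}$ in turn, in both cases first reducing the bracketed factor inside the integral to a quantity of the form $O_{\mathbb{P}}(k^{-\eta})\,u^{-(1/2-\eta)/\gamma+\epsilon}$, and then integrating by parts against $d\{(n/k)\overline{H}_{n}(Z_{n-k:n}u)\}$ just as in Lemmas \ref{Lemma-8}--\ref{Lemma-9}. Throughout, one works on the high-probability event $\mathcal{A}_{n}:=\{|Z_{n-k:n}/h-1|<\epsilon\}$, uses $(n/k)\overline{H}(Z_{n-k:n})\overset{\mathbb{P}}{\rightarrow}1$, and applies Proposition \ref{Potter} to $\overline{H}$ together with $(\ref{inequa-p})$ (which gives $\overline{H}^{(1)}/\overline{H}\rightarrow p$ and $\overline{H}^{(0)}/\overline{H}\rightarrow 1-p$) to get $\overline{H}^{(1)}(Z_{n-k:n}u)\vee\overline{H}^{(0)}(Z_{n-k:n}u)\le C(k/n)u^{-1/\gamma+\epsilon}$ uniformly over $u\ge 1$. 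The bound $(n/k)\overline{H}_{n}(Z_{n-k:n}u)\le C(n/k)\overline{H}(Z_{n-k:n}u)\le C'u^{-1/\gamma+\epsilon}$, needed for every integration by parts, follows from $(\ref{rep-H})$, assertion $(i)$ of Lemma \ref{lemma-5} and Proposition \ref{Potter}.

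For $\mathbf{R}_{n10}$ the key step is to control $\beta_{n}^{\ast}(u)-\sqrt{n/k}\,\mathbf{B}_{n}^{\ast}(Z_{n-k:n}u)$. Adding the almost sure representations $(\ref{rep-H1})$ and $(\ref{rep-H0})$ (recall $\overline{H}_{n}=\overline{H}_{n}^{(0)}+\overline{H}_{n}^{(1)}$ and likewise for $\overline{H}$), one gets, with $v=Z_{n-k:n}u$,
\[
\beta_{n}^{\ast}(u)=\sqrt{\tfrac{n}{k}}\,\Big(\big\{\alpha_{n}(\theta)-\alpha_{n}\big(\theta-\overline{H}^{(1)}(v)\big)\big\}-\alpha_{n}\big(1-\overline{H}^{(0)}(v)\big)\Big),
\]
so that, in view of $(\ref{Bn})$, the difference $\beta_{n}^{\ast}(u)-\sqrt{n/k}\,\mathbf{B}_{n}^{\ast}(Z_{n-k:n}u)$ is exactly $\sqrt{n/k}$ times the sum of the two Gaussian-approximation discrepancies handled by $(\ref{approx2})$ (with $s=\overline{H}^{(1)}(v)$) and $(\ref{approx})$ (with $s=1-\overline{H}^{(0)}(v)$). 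Inserting the uniform bounds on $\overline{H}^{(1)}(v),\overline{H}^{(0)}(v)$ into the weights $s^{1/2-\eta}$ and $(s(1-s))^{1/2-\eta}$ appearing there, and using the identity $\sqrt{n/k}\,n^{-\eta}(k/n)^{1/2-\eta}=k^{-\eta}$, yields, for every $0<\eta<1/4$ and small $\epsilon>0$, that $\beta_{n}^{\ast}(u)-\sqrt{n/k}\,\mathbf{B}_{n}^{\ast}(Z_{n-k:n}u)=O_{\mathbb{P}}(k^{-\eta})\,u^{-(1/2-\eta)/\gamma+\epsilon}$ on the sub-range where $\overline{H}^{(1)}(v)\wedge\overline{H}^{(0)}(v)\ge\lambda/n$. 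Multiplying by $u^{1/\gamma}$ and integrating by parts in $\mathbf{R}_{n10}$ then leaves a convergent integral of order $O_{\mathbb{P}}(k^{-\eta})\,x^{(\eta-1/2)/\gamma+2\epsilon}$, which is $o_{\mathbb{P}}(x^{-\tau/\gamma})$ uniformly over $x\ge1$ as soon as $\tau<1/8$, upon choosing $\eta$ just below $1/4$ and $\epsilon$ small.

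For $\mathbf{R}_{n11}$ the argument parallels that of Lemma \ref{Lemma-11}. Writing $\mathbf{B}_{n}^{\ast}(Z_{n-k:n}u)-B_{n}^{\ast}((k/n)u^{-1/\gamma})$ as $\{B_{n}(\overline{H}^{(1)}(v))-B_{n}(p\tfrac{k}{n}u^{-1/\gamma})\}-\{B_{n}(1-\overline{H}^{(0)}(v))-B_{n}(1-(1-p)\tfrac{k}{n}u^{-1/\gamma})\}$, one applies the Brownian-bridge increment inequality $(\ref{inequa})$ to each bracket. The increments of the arguments satisfy, on $\mathcal{A}_{n}$, $|\overline{H}^{(1)}(v)-p\tfrac{k}{n}u^{-1/\gamma}|=o_{\mathbb{P}}(1)\,\tfrac{k}{n}u^{-1/\gamma+\epsilon}$ and the analogue with $\overline{H}^{(0)}$ and $1-p$, by $(\ref{inequa-p})$, Proposition \ref{Potter} applied to $\overline{H}$, and $(n/k)\overline{H}(Z_{n-k:n})\rightarrow1$; hence $(\ref{inequa})$ gives $\sqrt{n/k}\,|\mathbf{B}_{n}^{\ast}(Z_{n-k:n}u)-B_{n}^{\ast}((k/n)u^{-1/\gamma})|=o_{\mathbb{P}}(1)\,u^{-1/(2\gamma)+\epsilon}$, uniformly over $u$ bounded away from the upper endpoint. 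Multiplying by $u^{1/\gamma}$ and integrating by parts exactly as above gives $\mathbf{R}_{n11}(x)=o_{\mathbb{P}}(x^{-\tau/\gamma})$.

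The main obstacle is the neighbourhood of the upper endpoint $u\asymp Z_{n:n}/Z_{n-k:n}$, where $\overline{H}^{(1)}(Z_{n-k:n}u)$ or $\overline{H}^{(0)}(Z_{n-k:n}u)$ falls below $\lambda/n$ and $(\ref{approx})$--$(\ref{approx2})$, respectively the modulus-of-continuity step, no longer apply directly. There one estimates the contribution crudely: the total variation of $(n/k)\overline{H}_{n}(Z_{n-k:n}\cdot)$ over that range is $O_{\mathbb{P}}(\lambda/k)$ while $u^{1/\gamma}\asymp k/\lambda$ there, so the large factor $u^{1/\gamma}$ is absorbed, and the bracketed process is $o_{\mathbb{P}}(1)$ there by Lemma \ref{Lemma-3} together with the uniform modulus of continuity of $B_{n}$; letting $\lambda\rightarrow\infty$ slowly makes this endpoint contribution $o_{\mathbb{P}}(1)=o_{\mathbb{P}}(x^{-\tau/\gamma})$ as well. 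Combining the two regimes completes the proof for $i=10,11$.
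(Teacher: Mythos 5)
Your proposal is correct and follows essentially the same route as the paper: you split $\mathbf{R}_{n10}$ into the $\overline{H}^{(1)}$ and $\overline{H}^{(0)}$ components (the paper's $K_{1n},K_{2n}$ via $\widetilde{\beta}_{n}=\beta_{n}^{\ast}-\beta_{n}$), apply the increment approximation $(\ref{approx2})$ and the classical approximation $(\ref{approx})$ with the weight $s^{1/2-\eta}$, and treat $\mathbf{R}_{n11}$ with the Brownian-bridge increment inequality $(\ref{inequa})$ combined with the bound $(\ref{three})$ on $|\overline{H}^{(1)}(uZ_{n-k:n})-p\tfrac{k}{n}u^{-1/\gamma}|$. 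The only presentational difference is at the upper endpoint, where the paper avoids your two-regime splitting by simply observing that the integral stops at $u=Z_{n:n}/Z_{n-k:n}$, on which range $\overline{H}^{(1)}(uZ_{n-k:n})\geq(1-\epsilon)p/n$ with large probability, so a fixed $\lambda$ in $(\ref{approx})$--$(\ref{approx2})$ already suffices.
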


\begin{proof}
Let us first fix $0\leq\tau<1/8$ and recall that
\[
R_{n10}\left(  x\right)  =-p^{-1}\int_{x}^{Z_{n:n}/Z_{n-k:n}}u^{1/\gamma
}\left\{  \beta_{n}^{\ast}\left(  u\right)  -\sqrt{\frac{n}{k}}\mathbf{B}%
_{n}^{\ast}\left(  uZ_{n-k:n}\right)  \right\}  d\left\{  \frac{n}{k}%
\overline{H}_{n}\left(  Z_{n-k:n}u\right)  \right\}  ,
\]
which may be rewritten, by letting $\widetilde{\beta}_{n}:=\beta_{n}^{\ast
}-\beta_{n}$ and $\widetilde{\mathbf{B}}_{n}:=\mathbf{B}_{n}^{\ast}%
-\mathbf{B}_{n},$ as the sum of
\[
K_{1n}\left(  x\right)  :=-p^{-1}\int_{x}^{Z_{n:n}/Z_{n-k:n}}u^{1/\gamma
}\left\{  \beta_{n}\left(  u\right)  -\sqrt{\frac{n}{k}}\mathbf{B}_{n}\left(
uZ_{n-k:n}\right)  \right\}  d\left\{  \frac{n}{k}\overline{H}_{n}\left(
Z_{n-k:n}u\right)  \right\}  ,
\]
and%
\[
K_{2n}\left(  x\right)  :=-p^{-1}\int_{x}^{Z_{n:n}/Z_{n-k:n}}u^{1/\gamma
}\left\{  \widetilde{\beta}_{n}\left(  u\right)  -\sqrt{\frac{n}{k}}%
\widetilde{\mathbf{B}}_{n}\left(  uZ_{n-k:n}\right)  \right\}  d\left\{
\frac{n}{k}\overline{H}_{n}\left(  Z_{n-k:n}u\right)  \right\}  .
\]
Next we show that, $K_{in}\left(  x\right)  =o_{\mathbb{P}}\left(
x^{-\tau/\gamma}\right)  ,$ $i=1,2.$ Indeed, let $0<\eta<1/8$ and
$0<\epsilon<1$ be fixed and write
\begin{align*}
K_{1n}\left(  x\right)   &  =-p^{-1}\int_{x}^{Z_{n:n}/Z_{n-k:n}}u^{1/\gamma
}\frac{\beta_{n}\left(  u\right)  -\sqrt{\dfrac{n}{k}}\mathbf{B}_{n}\left(
uZ_{n-k:n}\right)  }{\left[  \overline{H}^{\left(  1\right)  }\left(
uZ_{n-k:n}\right)  \right]  ^{1/2-\eta}}\\
&  \ \ \ \ \ \ \ \ \ \ \ \ \ \ \ \ \ \ \ \ \ \times\left[  \overline
{H}^{\left(  1\right)  }\left(  uZ_{n-k:n}\right)  \right]  ^{1/2-\eta
}d\left\{  \frac{n}{k}\overline{H}_{n}\left(  Z_{n-k:n}u\right)  \right\}  .
\end{align*}
Since is $\overline{H}^{\left(  1\right)  }$ is a decreasing function, then
over the interval $x<u<Z_{n:n}/Z_{n-k:n},$ we have%
\[
\overline{H}^{\left(  1\right)  }\left(  Z_{n:n}\right)  \leq\overline
{H}^{\left(  1\right)  }\left(  uZ_{n-k:n}\right)  \leq\overline{H}^{\left(
1\right)  }\left(  xZ_{n-k:n}\right)  \leq\overline{H}^{\left(  1\right)
}\left(  Z_{n-k:n}\right)  ,
\]
for any $x\geq1.$ Then with a large probability $\left(  1-\epsilon\right)
p\frac{1}{n}\leq\overline{H}^{\left(  1\right)  }\left(  uZ_{n-k:n}\right)
\leq\left(  1+\epsilon\right)  p\frac{k}{n},$ hence, we may apply Gaussian
approximations $\left(  \ref{approx2}\right)  $ to get%
\[
K_{1n}\left(  x\right)  =\sqrt{\frac{n}{k}}O_{\mathbb{P}}\left(  n^{-\eta
}\right)  \int_{x}^{\infty}u^{1/\gamma}\left[  \overline{H}^{\left(  1\right)
}\left(  uZ_{n-k:n}\right)  \right]  ^{1/2-\eta}d\left\{  \frac{n}{k}%
\overline{H}_{n}\left(  Z_{n-k:n}u\right)  \right\}  .
\]
From their definitions above, $\widetilde{\beta}_{n}\left(  u\right)
=-\alpha_{n}\left(  1-\overline{H}^{\left(  0\right)  }\left(  v\right)
\right)  $ and $\widetilde{\mathbf{B}}_{n}\left(  u\right)  =-B_{n}\left(
1-\overline{H}^{\left(  0\right)  }\left(  v\right)  \right)  .$ Then by
applying Gaussian approximations $\left(  \ref{approx}\right)  $ we have%
\[
K_{2n}\left(  x\right)  =\sqrt{\frac{n}{k}}O_{\mathbb{P}}\left(  n^{-\eta
}\right)  \int_{x}^{\infty}u^{1/\gamma}\left[  \overline{H}^{\left(  0\right)
}\left(  uZ_{n-k:n}\right)  \right]  ^{1/2-\eta}d\left\{  \frac{n}{k}%
\overline{H}_{n}\left(  Z_{n-k:n}u\right)  \right\}  .
\]
By using the routine manipulations as used in Lemma \ref{Lemma-9}, we end up
with
\[
K_{in}\left(  x\right)  =O_{\mathbb{P}}\left(  k^{-\eta}\right)  x^{\left(
2\eta-1/2\right)  /\gamma+\epsilon},\text{ }i=1,2.
\]
Since $k\rightarrow\infty$ and $2\eta-1/2+\tau<0,$ then $K_{in}\left(
x\right)  =o_{\mathbb{P}}\left(  x^{-\tau/\gamma}\right)  ,$ $i=1,2.$ Let us
now consider $R_{n11}\left(  x\right)  $ which may be rewritten into the sum
of
\[
\mathcal{A}_{1n}\left(  x\right)  :=-\frac{1}{p}\int_{x}^{\infty}u^{1/\gamma
}\sqrt{\frac{n}{k}}\left\{  \mathbf{B}_{n}\left(  uZ_{n-k:n}\right)
-B_{n}\left(  p\frac{k}{n}u^{-1/\gamma}\right)  \right\}  d\left\{  \frac
{n}{k}\overline{H}_{n}\left(  Z_{n-k:n}u\right)  \right\}  ,
\]
and%
\[
\mathcal{A}_{2n}\left(  x\right)  :=\frac{1}{p}\int_{x}^{\infty}u^{1/\gamma
}\sqrt{\frac{n}{k}}\left\{  \widetilde{\mathbf{B}}_{n}\left(  uZ_{n-k:n}%
\right)  -\widetilde{B}_{n}\left(  \frac{k}{n}u^{-1/\gamma}\right)  \right\}
d\left\{  \frac{n}{k}\overline{H}_{n}\left(  Z_{n-k:n}u\right)  \right\}  .
\]
Recall that from $\left(  \ref{Bn}\right)  $ and write%
\[
B_{n}\left(  p\frac{k}{n}u^{-1/\gamma}\right)  -\mathbf{B}_{n}\left(
uZ_{n-k:n}\right)  =B_{n}\left(  p\frac{k}{n}u^{-1/\gamma}\right)
-B_{n}\left(  \overline{H}^{\left(  1\right)  }\left(  uZ_{n-k:n}\right)
\right)  .
\]
Once again, by applying this inequality $\left(  \ref{inequa}\right)  $ with
$x=p\dfrac{k}{n}u^{-1/\gamma},$ $y=\overline{H}^{\left(  1\right)  }\left(
uZ_{n-k:n}\right)  $ and $\rho=\left\vert \overline{H}^{\left(  1\right)
}\left(  uZ_{n-k:n}\right)  -p\frac{k}{n}u^{-1/\gamma}\right\vert ,$ we may
write
\[
B_{n}\left(  p\frac{k}{n}u^{-1/\gamma}\right)  -B_{n}\left(  \overline
{H}^{\left(  1\right)  }\left(  uZ_{n-k:n}\right)  \right)  =O_{\mathbb{P}%
}\left(  \sqrt{\left\vert \overline{H}^{\left(  1\right)  }\left(
uZ_{n-k:n}\right)  -p\frac{k}{n}u^{-1/\gamma}\right\vert }\right)  ,
\]
uniformly over $u\geq1.$ On the other hand, from inequality $\left(
\ref{three}\right)  ,$ uniformly on $u\geq1,$ we write $\left\vert
\overline{H}^{\left(  1\right)  }\left(  uZ_{n-k:n}\right)  -p\dfrac{k}%
{n}u^{-1/\gamma}\right\vert \leq\epsilon p\dfrac{k}{n}u^{-1/\gamma+\epsilon},$
therefore%
\[
\sqrt{\frac{n}{k}}\left(  B_{n}\left(  p\frac{k}{n}u^{-1/\gamma}\right)
-B_{n}\left(  \overline{H}^{\left(  1\right)  }\left(  uZ_{n-k:n}\right)
\right)  \right)  =o_{\mathbb{P}}\left(  u^{\left(  \epsilon-1/\gamma\right)
/2}\right)  .
\]
which implies that $\mathcal{A}_{in}\left(  x\right)  =o_{\mathbb{P}}\left(
1\right)  \int_{x}^{\infty}u^{\left(  \epsilon-1/\gamma\right)  /2}d\left\{
\frac{n}{k}\overline{H}_{n}\left(  Z_{n-k:n}u\right)  \right\}  ,$ $i=1,2.$ By
using routine manipulations of assertion $\left(  i\right)  $ of Lemma
\ref{lemma-5}, with the fact $\left(  1-\epsilon\gamma\right)  /2>\tau,$ as
$\epsilon\downarrow0,$ we get $\mathcal{A}_{in}\left(  x\right)
=o_{\mathbb{P}}\left(  x^{-\tau/\gamma}\right)  ,$ uniformly over $x\geq1,$ as sought.
\end{proof}

\begin{lemma}
\label{Lemma-13}For every $0\leq\tau<1/8,$ we have $R_{ni}\left(  x\right)
=o_{\mathbb{P}}\left(  x^{-\tau/\gamma}\right)  ,$ $i=12,13,14.$
\end{lemma}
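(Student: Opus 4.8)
The three remainders $R_{n12},R_{n13},R_{n14}$ all have the shape $\int_{\mathcal I_n(x)}\phi(s)\,ds$, where $\mathcal I_n(x)$ is the random interval with endpoints $\frac nk\overline H_n(Z_{n:n}x)$ and $\frac nk\overline H_n(Z_{n-k:n}x)$ and the integrand is a product of the two ``defect'' factors
\[
\mathcal E_1(s):=(\ell_n(s))^{1/\gamma}-s^{-1},\qquad
\mathcal E_2(s):=\sqrt{\tfrac nk}\Bigl\{B_n^{\ast}\bigl(\tfrac kn(\ell_n(s))^{-1/\gamma}\bigr)-B_n^{\ast}\bigl(\tfrac kn s\bigr)\Bigr\},
\]
together with the weighted bridge $\sqrt{n/k}\,B_n^{\ast}(\tfrac kn s)$: schematically $R_{n12}\leftrightarrow\mathcal E_1\mathcal E_2$, $R_{n13}\leftrightarrow s^{-1}\mathcal E_2$ and $R_{n14}\leftrightarrow\mathcal E_1\cdot\sqrt{n/k}\,B_n^{\ast}(\tfrac kn s)$. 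The plan is to turn Lemmas~\ref{Lemma-7} and~\ref{Lemma-11} into uniform power-of-$s$ bounds on $\mathcal E_1$ and $\mathcal E_2$, insert them, integrate, and read off the exponent of $x$.

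I would first assemble, for a small fixed $\eta\in(0,1/8)$: (i) $|\mathcal E_1(s)|=o_{\mathbb P}(s^{-1-\eta})$ uniformly over $k^{-1}\le s\le1$, obtained from Lemma~\ref{Lemma-7} via the mean value theorem (the same step already carried out in the proof of Lemma~\ref{Lemma-11}); (ii) $|\mathcal E_2(s)|=o_{\mathbb P}(s^{(1-\eta)/2})$ on the same range, which is exactly Lemma~\ref{Lemma-11}; (iii) the weighted bridge estimate $\sqrt{n/k}\,|B_n^{\ast}(\tfrac kn s)|=O_{\mathbb P}(s^{1/2-\eta})$ uniformly in $s\in(0,1]$ (note that $B_n^{\ast}$ is itself a Brownian bridge, so this is a classical Chibisov--O'Reilly type bound, available through~\eqref{inequa} of~\cite{CsCsHM86} and Proposition~3.1 of~\cite{EHL-2006}); and (iv) the concentration of the upper endpoint, $\frac nk\overline H_n(Z_{n-k:n}x)\le(1+\epsilon)x^{-1/\gamma}$ uniformly in $x\ge1$ with probability tending to one, obtained by writing this quantity as $\frac nk\overline H(Z_{n-k:n}x)+k^{-1/2}\beta_n^{\ast}(x)$, invoking Lemma~\ref{Lemma-3}, the consistency $Z_{n-k:n}/h\overset{\mathbb P}{\rightarrow}1$, and Proposition~\ref{Potter} applied to $\overline H$. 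I would then split each integral at $s=k^{-1}$: on the bulk part $k^{-1}\le s\le(1+\epsilon)x^{-1/\gamma}$ items (i)--(iii) apply verbatim, while on the sliver $s\le k^{-1}$ one uses that $\ell_n$ is frozen at the constant $Z_{n:n}/Z_{n-k:n}=O_{\mathbb P}(k^{\gamma})$, a Rényi-type bound via Proposition~\ref{Potter} applied to $U_H$.

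The bulk estimate is then a one-line computation. For $R_{n14}$ the integrand is $o_{\mathbb P}(s^{-1-\eta})\,O_{\mathbb P}(s^{1/2-\eta})=o_{\mathbb P}(s^{-1/2-2\eta})$, whose integral over $[k^{-1},(1+\epsilon)x^{-1/\gamma}]$ is $o_{\mathbb P}(x^{-(1/2-2\eta)/\gamma})$; for $R_{n13}$ the integrand is $s^{-1}o_{\mathbb P}(s^{(1-\eta)/2})=o_{\mathbb P}(s^{-(1+\eta)/2})$, integrating to $o_{\mathbb P}(x^{-(1-\eta)/(2\gamma)})$; for $R_{n12}$ the integrand is $o_{\mathbb P}(s^{-1-\eta})\,o_{\mathbb P}(s^{(1-\eta)/2})=o_{\mathbb P}(s^{-1/2-3\eta/2})$, integrating to $o_{\mathbb P}(x^{-(1/2-3\eta/2)/\gamma})$. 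In each case, multiplying by $x^{\tau/\gamma}$ and taking $\sup_{x\ge1}$, the resulting exponent of $x$ is strictly negative once $\eta$ is small (the condition $0\le\tau<1/8$ leaves ample room), so the supremum is attained at $x=1$ and is $o_{\mathbb P}(1)$; the sliver $s\le k^{-1}$ is treated by the same bookkeeping, with the bounded random constant $(\ell_n(s))^{1/\gamma}=O_{\mathbb P}(k)$ replacing $s^{-1}$ there.

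The delicate point — and the only real obstacle — is precisely that sliver $s\le k^{-1}$. Lemmas~\ref{Lemma-7} and~\ref{Lemma-11} deliver their sharp weighted estimates only for $s\ge k^{-1}$, and on $(0,k^{-1})$ the ``linearized'' substitute $s^{-1}$ appearing in the decomposition is no longer comparable to $(\ell_n(s))^{1/\gamma}$ (which is the bounded constant $(Z_{n:n}/Z_{n-k:n})^{1/\gamma}$), so the $s^{-1}$ weight in $R_{n13}$ and the $\mathcal E_1\sim-s^{-1}$ behaviour in $R_{n12}$ interact with $\mathcal E_2(s)\to\sqrt{n/k}\,B_n^{\ast}\bigl(\tfrac kn(Z_{n:n}/Z_{n-k:n})^{-1/\gamma}\bigr)$ through a factor that has to be handled with care near $s=0$: one exploits that the offending contributions to $R_{n12}$ and $R_{n13}$ coincide up to sign, i.e. one carries out the low-$s$ bookkeeping for those two terms jointly (equivalently, re-groups so that the effective integrand on $(0,k^{-1})$ is the difference of the matching $\mathbb T_{n1}$-pieces, which is bounded there). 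Granting this, the exponent computation of the previous paragraph yields $R_{ni}(x)=o_{\mathbb P}(x^{-\tau/\gamma})$ uniformly over $x\ge1$ for $i=12,13,14$, and one sees that the restriction $\tau<1/8$ enters this lemma only through the $(1/2-\eta)$ weight on the bridge and its compounding in $R_{n12}$, not through any borderline inequality.
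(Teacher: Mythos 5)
Your treatment of the main part of each integral is exactly the paper's argument: bound $(\ell_n(s))^{1/\gamma}-s^{-1}$ by $o_{\mathbb P}(s^{-1-\eta})$ via Lemma \ref{Lemma-7} and the mean value theorem, bound the bridge increment by Lemma \ref{Lemma-11}, integrate the resulting power of $s$ up to $\tfrac nk\overline H_n(Z_{n-k:n}x)=O_{\mathbb P}(x^{-1/\gamma+\epsilon})$, and check that the exponent plus $\tau/\gamma$ is negative (the paper works with $1/8<\eta<1/4$ and only writes out $R_{n12}$, dispatching $R_{n13}$ and $R_{n14}$ as ``similar''; your explicit exponents for those two are correct). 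The one place you depart from the paper is the ``sliver'' $s<k^{-1}$: in the paper this region simply does not arise, because the lower endpoint of integration is $\tfrac nk\overline H_n(Z_{n:n}x)$, which the paper takes equal to $k^{-1}$ (it states $\overline H_n(Z_{n:n}x)=n^{-1}$ for $x\ge1$), so the whole range of integration lies in $[k^{-1},1]$, precisely where Lemmas \ref{Lemma-7} and \ref{Lemma-11} hold. Your proposed fix for that region — cancelling the $-s^{-1}\mathcal E_2$ part of $R_{n12}$ against $R_{n13}$ — is therefore unnecessary, and note that as stated it would only control the sum $R_{n12}+R_{n13}$ near $s=0$, not each term separately as the lemma asserts; if one insisted on the convention $\overline H_n(Z_{n:n}x)=0$, the individual terms would in fact diverge there, which is exactly why the endpoint must be read as $k^{-1}$.
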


\begin{proof}
Let fix $0\leq\tau<1/8$ and recall that%
\[
R_{n12}\left(  x\right)  =p^{-1}\int_{\frac{n}{k}\overline{H}_{n}\left(
Z_{n-k:n}x\right)  }^{\frac{n}{k}\overline{H}_{n}\left(  Z_{n:n}x\right)
}\left(  \left(  \ell_{n}\left(  s\right)  \right)  ^{1/\gamma}-s^{-1}\right)
\sqrt{\frac{n}{k}}\left\{  B_{n}^{\ast}\left(  \frac{k}{n}\left(  \ell
_{n}\left(  s\right)  \right)  ^{-1/\gamma}\right)  -B_{n}^{\ast}\left(
\frac{k}{n}s\right)  \right\}  ds,
\]
where $\ell_{n}\left(  s\right)  $ is as in $\left(  \ref{ln(s)}\right)  .$
Note that $\overline{H}_{n}\left(  Z_{n:n}x\right)  =n^{-1}$ for $x\geq1,$
therefore
\[
\left\vert R_{n12}\left(  x\right)  \right\vert \leq p^{-1}\int_{k^{-1}%
}^{\frac{n}{k}\overline{H}_{n}\left(  Z_{n-k:n}x\right)  }\left\vert \left(
\ell_{n}\left(  s\right)  \right)  ^{1/\gamma}-s^{-1}\right\vert \sqrt
{\frac{n}{k}}\left\vert B_{n}^{\ast}\left(  \frac{k}{n}\left(  \ell_{n}\left(
s\right)  \right)  ^{-1/\gamma}\right)  -B_{n}^{\ast}\left(  \frac{k}%
{n}s\right)  \right\vert ds.
\]
Let $1/8<\eta<1/4$ and small $\epsilon>0,$ then from Lemma \ref{Lemma-7} and
by using the mean value theorem, we deduce that $\left(  \ell_{n}\left(
s\right)  \right)  ^{1/\gamma}-s^{-1}=o_{\mathbb{P}}\left(  s^{-1-\eta
}\right)  ,$ uniformly over $k^{-1}\leq s<1.$ On the other hand, by using
Lemma \ref{Lemma-12},%
\[
R_{n12}\left(  x\right)  =O_{\mathbb{P}}\left(  1\right)  \int_{0}^{\frac
{n}{k}\overline{H}_{n}\left(  Z_{n-k:n}x\right)  }s^{-1-\eta}s^{-\left(
\eta-1\right)  /2}ds=O_{\mathbb{P}}\left(  1\right)  \left(  \frac{n}%
{k}\overline{H}_{n}\left(  Z_{n-k:n}x\right)  \right)  ^{-\frac{3}{2}%
\eta+\frac{1}{2}}.
\]
Note that $\overline{H}_{n}\left(  Z_{n-k:n}x\right)  =o_{\mathbb{P}}\left(
x^{-1/\gamma+\epsilon}\right)  ,$ uniformly over $x\geq1,$ it follows that
$R_{n12}\left(  x\right)  =o_{\mathbb{P}}\left(  1\right)  x^{\left(  \frac
{3}{2}\eta-\frac{1}{2}\right)  /\gamma+\epsilon^{\ast}},$ with $\epsilon
^{\ast}\downarrow0.$ We verified that for $0\leq\tau<1/8$ and $1/8<\eta<1/4,$
we have $\left(  3\eta-1\right)  /2+\tau<0,$ this means that $R_{n12}\left(
x\right)  =o_{\mathbb{P}}\left(  x^{-\tau/\gamma}\right)  .$ By using similar
arguments, we also end up with $R_{n13}\left(  x\right)  =o_{\mathbb{P}%
}\left(  x^{-\tau/\gamma}\right)  =$ $R_{n14}\left(  x\right)  $ as sought.
\end{proof}

\begin{lemma}
\label{Lemma-14}For every $0\leq\tau<1/8,$ we have $R_{ni}\left(  x\right)
=o_{\mathbb{P}}\left(  x^{-\tau/\gamma}\right)  ,$ $i=15,16.$
\end{lemma}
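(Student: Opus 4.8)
The plan is to treat $R_{n15}$ and $R_{n16}$ with the same two ingredients: a uniform weighted bound on the Gaussian integrand near the origin, and Potter‑type control of the random upper endpoints coming from $\overline{H}_{n}$. Fix $0<\eta<\tfrac12$ small enough that $(\tfrac12-\eta)^{2}>\tfrac18$, which is possible because $\tau<\tfrac18$. First I would record that
\[
\sup_{0<s\le2}\frac{\sqrt{n/k}\,\bigl|B_{n}^{\ast}\!\bigl(\tfrac{k}{n}s\bigr)\bigr|}{s^{1/2-\eta}}=O_{\mathbb{P}}(1);
\]
writing $B_{n}^{\ast}(t)=B_{n}(pt)-B_{n}(1-(1-p)t)$ and using the Brownian‑bridge symmetry $\{B_{n}(1-t)\}\overset{\mathcal{D}}{=}\{B_{n}(t)\}$, this follows from the weighted tightness of $B_{n}$ near $0$ (inequality $(\ref{inequa})$, equivalently Proposition~3.1 in \cite{EHL-2006} as invoked in the proof of Lemma~\ref{Lemma-3}); the factors $(k/n)^{\eta}$ and $(n/k)^{\eta}$ produced by the rescaling cancel, which is the one point requiring care. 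Consequently the integrand $s^{-1}\sqrt{n/k}B_{n}^{\ast}(\tfrac{k}{n}s)$ of both $R_{n15}$ and $R_{n16}$ is bounded in absolute value by $O_{\mathbb{P}}(1)\,s^{-1/2-\eta}$ on $(0,2]$, which is integrable at $0$ since $\tfrac12-\eta>0$.

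For $R_{n16}$ this finishes at once: the upper endpoint $\tfrac{n}{k}\overline{H}_{n}(xZ_{n:n})$ vanishes for $x>1$ and is at most $k^{-1}$ at $x=1$, so $|R_{n16}(x)|\le p^{-1}O_{\mathbb{P}}(1)\int_{0}^{1/k}s^{-1/2-\eta}\,ds=O_{\mathbb{P}}\bigl(k^{-(1/2-\eta)}\bigr)$, and this bound is carried only at $x=1$, where $x^{\tau/\gamma}=1$; hence $\sup_{x\ge1}x^{\tau/\gamma}|R_{n16}(x)|=o_{\mathbb{P}}(1)$.

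For $R_{n15}$ the endpoints are $x^{-1/\gamma}$ and $b_{n}(x):=\tfrac{n}{k}\overline{H}_{n}(xZ_{n-k:n})$, both in $(0,1]\subset(0,2]$, so monotonicity of $s\mapsto s^{-1/2-\eta}$ gives $|R_{n15}(x)|\le \tfrac{O_{\mathbb{P}}(1)}{p(1/2-\eta)}\bigl|b_{n}(x)^{1/2-\eta}-x^{-(1/2-\eta)/\gamma}\bigr|$ uniformly in $x\ge1$. The crucial step is uniform control of $b_{n}(x)$: writing $b_{n}(x)=\tfrac{n}{k}\overline{H}(xZ_{n-k:n})+k^{-1/2}\beta_{n}^{\ast}(x)$ and using $\tfrac{n}{k}\overline{H}(Z_{n-k:n})\overset{\mathbb{P}}{\to}1$, $Z_{n-k:n}/h\overset{\mathbb{P}}{\to}1$, Proposition~\ref{Potter} applied to $\overline{H}$, and Lemma~\ref{Lemma-3} for $\beta_{n}^{\ast}$, one obtains $b_{n}(x)=O_{\mathbb{P}}(1)\,x^{-(1/2-\eta)/\gamma}$ uniformly over $x\ge1$ (this also covers $x>Z_{n:n}/Z_{n-k:n}$, where $b_{n}(x)=0$) and $b_{n}(x)\to x^{-1/\gamma}$ uniformly on every compact $[1,K]$. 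I would then split $x\ge1$ at a large fixed $K$: for $x>K$, $\bigl|b_{n}(x)^{1/2-\eta}-x^{-(1/2-\eta)/\gamma}\bigr|\le b_{n}(x)^{1/2-\eta}+x^{-(1/2-\eta)/\gamma}=O_{\mathbb{P}}(1)\,x^{-(1/2-\eta)^{2}/\gamma}$, whence $x^{\tau/\gamma}|R_{n15}(x)|\le O_{\mathbb{P}}(1)\,K^{(\tau-(1/2-\eta)^{2})/\gamma}\to0$ as $K\to\infty$; for $1\le x\le K$, the uniform convergence of $b_{n}$ and continuity of $t\mapsto t^{1/2-\eta}$ give $\sup_{1\le x\le K}x^{\tau/\gamma}|R_{n15}(x)|\le K^{\tau/\gamma}o_{\mathbb{P}}(1)=o_{\mathbb{P}}(1)$. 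Combining the two ranges (choose $K$ to make the first contribution below a prescribed $\delta$, then let $n\to\infty$) yields $\sup_{x\ge1}x^{\tau/\gamma}|R_{n15}(x)|=o_{\mathbb{P}}(1)$, i.e.\ $R_{n15}(x)=o_{\mathbb{P}}(x^{-\tau/\gamma})$ uniformly.

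The principal obstacle is the uniform endpoint estimate for $b_{n}(x)$ over the whole half‑line: near $x\asymp Z_{n:n}/Z_{n-k:n}\asymp k^{\gamma}$ the relative error between $b_{n}(x)$ and $x^{-1/\gamma}$ need not be small, so one cannot linearise $t\mapsto t^{1/2-\eta}$ there and must instead exploit that both $b_{n}(x)^{1/2-\eta}$ and $x^{-(1/2-\eta)/\gamma}$ are then of order $k^{-(1/2-\eta)}$, which beats the growth $x^{\tau/\gamma}\le k^{\tau}$ precisely because $\tau<\tfrac18<(\tfrac12-\eta)^{2}$. Keeping the exponents consistent (the choice of $\eta$ relative to $\tau$, and the interplay of the Potter $\varepsilon$, the $k^{-1/2}$ coming from $\beta_{n}^{\ast}$ and the exponent in Lemma~\ref{Lemma-3}) is the delicate bookkeeping; everything else is routine and parallels Lemmas~\ref{Lemma-12}--\ref{Lemma-13}.
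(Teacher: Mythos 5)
Your proof is correct, and it takes a genuinely different route from the paper's. The paper controls the random endpoint $\tfrac{n}{k}\overline{H}_{n}(xZ_{n-k:n})$ via the de Haan--Resnick weighted approximation, obtaining $\sup_{x\geq1}x^{\rho/\gamma}\bigl|\tfrac{n}{k}\overline{H}_{n}(xZ_{n-k:n})-x^{-1/\gamma}\bigr|=O_{\mathbb{P}}(k^{-1/2})$ for $4\tau<\rho<1/2$ (this step invokes $\sqrt{k}A(h)=O(1)$ and hence second-order machinery), and then bounds the integral over the resulting short interval $[x^{-1/\gamma}-\epsilon x^{-\rho/\gamma},\,x^{-1/\gamma}+\epsilon x^{-\rho/\gamma}]$ by a Markov-type inequality using $\mathbf{E}\vert B_{n}^{\ast}(s)\vert\leq s^{1/2}$; the exponent bookkeeping $2\tau<\rho/2<1/4$ is where $\tau<1/8$ enters. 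You instead (a) dominate the integrand pathwise by $O_{\mathbb{P}}(1)s^{-1/2-\eta}$ via a weighted modulus bound on $B_{n}^{\ast}$ near the origin, and (b) control the endpoint only through the crude envelope $b_{n}(x)=O_{\mathbb{P}}(1)x^{-(1/2-\eta)/\gamma}$ (Potter plus Lemma \ref{Lemma-3}) together with uniform consistency on compacts and a splitting at a large $K$; your threshold $(\tfrac12-\eta)^{2}>\tfrac18>\tau$ plays the role of the paper's $\rho/2>2\tau$. Both arguments are sound. The one point in your write-up that genuinely needs the verification you allude to is the claim $\sup_{0<s\leq2}s^{-(1/2-\eta)}\sqrt{n/k}\,\vert B_{n}^{\ast}(\tfrac{k}{n}s)\vert=O_{\mathbb{P}}(1)$: the rescaling produces a factor $(n/k)^{\eta}$, which is cancelled not trivially but by the factor $(k/n)^{\eta}$ coming from Brownian scaling of the weighted supremum over the shrinking interval $(0,2pk/n]$; this does check out (and is the Gaussian analogue of the Proposition 3.1 of \cite{EHL-2006} bound the paper already uses in Lemma \ref{Lemma-3}). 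Your route has two advantages: it is uniform in $x$ by construction (the paper's Markov step is pointwise in $x$), and it does not appeal to the second-order condition, which is preferable since Lemma \ref{Lemma-14} is needed already for the purely first-order approximation $(\ref{TP-C1})$. The paper's route is shorter once the weighted approximation is granted and yields an explicit rate. Your treatment of $R_{n16}$ rests on reading $\overline{H}_{n}(xZ_{n:n})=0$ for $x>1$; the paper's own conventions are inconsistent on this point, so that is not a defect of your argument.
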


\begin{proof}
Let us fix $0\leq\tau<1/8$ and recall that%
\[
R_{n15}\left(  x\right)  =-p^{-1}\int_{x^{-1/\gamma}}^{\frac{n}{k}\overline
{H}_{n}\left(  Z_{n-k:n}x\right)  }s^{-1}\sqrt{\frac{n}{k}}B_{n}^{\ast}\left(
\frac{k}{n}s\right)  ds.
\]
It is clear that
\[
\left\vert R_{n15}\left(  x\right)  \right\vert \leq p^{-1}\int_{\min\left(
\frac{n}{k}\overline{H}_{n}\left(  Z_{n-k:n}x\right)  ,x^{-1/\gamma}\right)
}^{\max(\frac{n}{k}\overline{H}_{n}\left(  Z_{n-k:n}x\right)  ,x^{-1/\gamma}%
)}s^{-1}\sqrt{\frac{n}{k}}\left\vert B_{n}^{\ast}\left(  \frac{k}{n}s\right)
\right\vert ds.
\]
From \cite{H-R-98} (Propositions 2.2, 2.3, 3.1), see also \cite{H-R-93}
(Proposition 4.1), there exists a Brownian bridge $\left\{  \mathcal{B}%
\mathbf{(}t\mathbf{);}\text{ }t>0\right\}  $ such that
\[
\sup_{x\geq1}x^{\rho/\gamma}\left\vert \sqrt{k}\left\{  \dfrac{n}{k}%
\overline{H}_{n}\left(  Z_{n-k:n}x\right)  -x^{-1/\gamma}\right\}
-\mathcal{B}\left(  x^{-1/\gamma}\right)  -\sqrt{k}A\left(  h\right)
x^{-1/\gamma}\frac{x^{\nu/\gamma}-1}{\nu\gamma}\right\vert =o_{\mathbb{P}%
}\left(  1\right)  ,
\]
for every $4\tau<\rho<1/2.$\textbf{ }Note that $\sup_{x\geq1}x^{\rho/\gamma
}\left\vert \mathcal{B}\left(  x^{-1/\gamma}\right)  \right\vert =\sup_{0\leq
s\leq1}s^{-\rho}\left\vert \mathcal{B}\left(  s\right)  \right\vert $ which is
stochastically bounded (see for instance Lemma 3.2 in \cite{EHL-2006}). On the
other hand, we have $\sqrt{k}A\left(  h\right)  =O\left(  1\right)  ,$ it
follows that $\sqrt{k}A\left(  h\right)  x^{\rho/\gamma-1/\gamma}\left(
x^{\nu/\gamma}-1\right)  /\left(  \nu\gamma\right)  =O\left(  1\right)  ,$
uniformly over $x\geq1,$ therefore
\[
\sup\limits_{x\geq1}x^{\rho/\gamma}\left\vert \dfrac{n}{k}\overline{H}%
_{n}\left(  Z_{n-k:n}x\right)  -x^{-1/\gamma}\right\vert =O_{\mathbb{P}%
}\left(  k^{-1/2}\right)  =o_{\mathbb{P}}\left(  1\right)  .
\]
Let $0<\epsilon<1$ be sufficiently small and set
\[
\mathcal{D}_{n}\left(  \epsilon;\rho\right)  :=\left\{  \sup_{x\geq1}%
x^{\rho/\gamma}\left\vert \dfrac{n}{k}\overline{H}_{n}\left(  Z_{n-k:n}%
x\right)  -x^{-1/\gamma}\right\vert >\epsilon\right\}  .
\]
For a fixed $c>0,$ let $c_{x}:=cx^{-\tau/\gamma}$ and write
\[
\mathbb{P}\left(  \left\vert R_{n15}\left(  x\right)  \right\vert
>c_{x}\right)  <\mathbb{P}\left(  \left\vert R_{n15}\left(  x\right)
\right\vert >c_{x},\text{ }\mathcal{D}_{n}\left(  \epsilon;\rho\right)
\right)  +\mathbb{P}\left(  \mathcal{D}_{n}\left(  \epsilon;\rho\right)
\right)  .
\]
It is clear that
\[
\mathbb{P}\left(  \left\vert R_{n15}\left(  x\right)  \right\vert
>c_{x},\text{ }\mathcal{D}_{n}\left(  \epsilon;\rho\right)  \right)
\leq\mathbb{P}\left(  p^{-1}\int_{x^{-1/\gamma}-\epsilon x^{-\rho/\gamma}%
}^{x^{-1/\gamma}+\epsilon x^{-\rho/\gamma}}s^{-1}\sqrt{\frac{n}{k}}\left\vert
B_{n}^{\ast}\left(  \frac{k}{n}s\right)  \right\vert ds>c_{x}\right)  ,
\]
which, by using Markov inequality, is
\[
\leq p^{-1}c_{x}^{-2}\int_{x^{-1/\gamma}-\epsilon x^{-\rho/\gamma}%
}^{x^{-1/\gamma}+\epsilon x^{-\rho/\gamma}}s^{-1}\sqrt{\frac{n}{k}}%
\mathbf{E}\left\vert B_{n}^{\ast}\left(  \frac{k}{n}s\right)  \right\vert ds.
\]
It is easy to verify that $\mathbf{E}\left\vert B_{n}^{\ast}\left(  s\right)
\right\vert \leq s^{1/2},$ it follows that the previous quantity is
\[
\leq p^{-1}c_{x}^{-2}\int_{x^{-1/\gamma}-\epsilon x^{-\rho/\gamma}%
}^{x^{-1/\gamma}+\epsilon x^{-\rho/\gamma}}s^{-1/2}ds=2p^{-1}c^{-2}%
x^{2\tau/\gamma}\left\{  \left(  x^{-1/\gamma}+\epsilon x^{-\rho/\gamma
}\right)  ^{1/2}-\left(  x^{-1/\gamma}-\epsilon x^{-\rho/\gamma}\right)
^{1/2}\right\}  .
\]
It is ready to check, in view for the mean value theorem, that the expression
between two brackets equals $o\left(  x^{-\rho/\left(  2\gamma\right)
}\right)  .$ This implies that
\[
p^{-1}c_{x}^{-2}\int_{x^{-1/\gamma}-\epsilon}^{x^{-1/\gamma}+\epsilon}%
s^{-1/2}ds=o\left(  x^{\left(  2\tau-\rho/2\right)  /\gamma}\right)  =o\left(
1\right)  ,
\]
because $4\tau<\rho<1/2,$ therefore $\mathbb{P}\left(  \left\vert
R_{n14}\left(  x\right)  \right\vert >c_{x},\text{ }\mathcal{D}_{n}\left(
\epsilon;\rho\right)  \right)  =o\left(  1\right)  .$ Since $\mathbb{P}\left(
\mathcal{D}_{n}\left(  \epsilon;\rho\right)  \right)  =o\left(  1\right)  ,$
hence $R_{n15}\left(  x\right)  =o_{\mathbb{P}}\left(  x^{-\tau/\gamma
}\right)  ,$ uniformly over $x\geq1.$ By using similar arguments, we also show
that $R_{n16}\left(  x\right)  =o_{\mathbb{P}}\left(  x^{-\tau/\gamma}\right)
,$ that we omit further details.
\end{proof}

\end{document}